\pgfplotsset{compat=newest}
\title{\large{\bf Regularity estimates for quasilinear elliptic PDEs in non-divergence form with Hamiltonian terms and applications}}
\author{\it by \smallskip \\ Junior da Silva Bessa \footnote{\noindent Universidade Estadual de Campinas - UNICAMP. Departamento  de Matemática. Campinas - SP, Brazil. \noindent \texttt{E-mail address: \url{jbessa@unicamp.br}}}\quad $\&$\quad Jo\~{a}o Vitor da Silva
\footnote{\noindent Universidade Estadual de Campinas - UNICAMP. Departamento  de Matemática. Campinas - SP, Brazil. \noindent \texttt{E-mail address: \url{jdasilva@unicamp.br}}}
}
\newlength{\hchng}
\newlength{\vchng}
\def \dist {\mathrm{dist}}
\newcommand{\defeq}{\mathrel{\mathop:}=}
\newcommand{\intav}[1]{\mathchoice {\mathop{\vrule width 6pt height 3 pt depth  -2.5pt
\kern -8pt \intop}\nolimits_{\kern -6pt#1}} {\mathop{\vrule width
5pt height 3  pt depth -2.6pt \kern -6pt \intop}\nolimits_{#1}}
{\mathop{\vrule width 5pt height 3 pt depth -2.6pt \kern -6pt
\intop}\nolimits_{#1}} {\mathop{\vrule width 5pt height 3 pt depth
-2.6pt \kern -6pt \intop}\nolimits_{#1}}}
\newtheorem{theorem}{Theorem}[section]
\newtheorem{lemma}[theorem]{Lemma}
\newtheorem{proposition}[theorem]{Proposition}
\newtheorem{corollary}[theorem]{Corollary}
\theoremstyle{definition}
\newtheorem{definition}[theorem]{Definition}
\newtheorem{example}[theorem]{Example}
\theoremstyle{remark}
\newtheorem{remark}[theorem]{Remark}
\numberwithin{equation}{section}
\begin{document}
\maketitle

\begin{abstract}

In this manuscript, we investigate regularity estimates for a class of quasilinear elliptic equations in the non-divergence form that may exhibit degenerate behavior at critical points of their gradient. The prototype equation under consideration is
\[
|\nabla u(x)|^{\theta} \left( \Delta_p^{\mathrm{N}} u(x) + \langle \mathfrak{B}(x), \nabla u \rangle \right) + \varrho(x) |\nabla u(x)|^{\sigma} = f(x) \quad \text{in} \quad B_1,
\]
where $\theta > 0$, $\sigma \in (\theta, \theta + 1)$, and $p \in (1, \infty)$. The coefficients $\mathfrak{B}$ and $\varrho$ are bounded continuous functions, and the source term $f \in \mathrm{C}^0(B_1) \cap L^{\infty}(B_1)$.  We establish interior $\mathrm{C}^{1,\alpha}_{\text{loc}}$ regularity for some $\alpha \in (0,1)$, along with sharp quantitative estimates at critical points of existing solutions. Additionally, we prove a non-degeneracy property and establish both a Strong Maximum Principle and a Hopf-type lemma. In the final part, we apply our analytical framework to study existence, uniqueness, improved regularity, and non-degeneracy estimates for H\'{e}non-type models in the non-divergence form. These models incorporate strong absorption terms and linear/sublinear Hamiltonian terms and are of independent mathematical interest. Our results partially extend (for the non-variational quasilinear setting) the recent work by the second author in collaboration with Nornberg [Calc. Var. Partial Differential Equations 60 (2021), no. 6, Paper No. 202, 40 pp.], where sharp quantitative estimates were established for the fully nonlinear uniformly elliptic setting with Hamiltonian terms. However, our approach deviates from theirs and is founded on a novel perturbative application of the Ishii–Lions technique in order to incorporate the influence of Hamiltonian terms.
	
\medskip
\noindent \textbf{Keywords}: Regularity estimates, quasilinear elliptic models, non-divergence form operators, Hamiltonian terms.
\vspace{0.2cm}
	
\noindent \textbf{AMS Subject Classification: Primary 35B65, 35J60, 35J70, 35B51; Secondary 35D40}
\end{abstract}

\newpage

\section{Introduction}

In this article, we investigate regularity estimates for diffusion problems governed by quasilinear operators in non-divergence form, possibly exhibiting degenerate behavior. Specifically, we focus on prototypical models whose normalized operators arise in Tug-of-War games—stochastic models with noise—where such regularity properties play a fundamental role in understanding the non-variational nature of these problems.

More precisely, the model under consideration is given by
\begin{equation}\label{Problem}
|\nabla u|^{\theta}\Delta_{p}^{\mathrm{N}}u+\mathscr{H}(\nabla u,x)=f(x)\quad\text{in}\quad \mathrm{B}_1.
\end{equation}

Throughout this manuscript, for a fixed \( p \in (1, \infty) \), the operator
\begin{equation}\label{Def-Norm-p-Lap}
\Delta_{p}^{\mathrm{N}}u \defeq |\nabla u|^{2-p} \operatorname{div}(|\nabla u|^{p-2} \nabla u) =\Delta u+(p-2)\left\langle D^2u \frac{\nabla u}{|\nabla u|},\frac{\nabla u}{|\nabla u|}\right\rangle
\end{equation}
is referred to as the \textit{normalized \( p \)-Laplacian} or the \textit{game-theoretic \( p \)-Laplace operator}, which arises in certain stochastic models (see \cite{BlancRossi-Book}, \cite{Lewicka20}, and \cite{Parvia2024} for comprehensive surveys on Tug-of-War games and their associated PDEs). Moreover, the fully nonlinear operator
\[
\Delta_{\infty}^{\mathrm{N}} u(x) \coloneqq \left\langle D^2 u \frac{\nabla u}{|\nabla u|}, \frac{\nabla u}{|\nabla u|} \right\rangle
\]
is known as the \textit{normalized infinity-Laplacian}, which also plays a significant role in the interplay between nonlinear PDEs and Tug-of-War games (cf. \cite{BlancRossi-Book} and \cite{Parvia2024}).

From this point onward, we adopt the following assumptions:

\begin{itemize}
\item[(\textbf{H1})] \( p \in (1, \infty) \) is a fixed parameter related to the elliptic nature of the operator;
\item[(\textbf{H2})] \( \theta \in (0, \infty) \) characterizes the degeneracy of the model;
\item[(\textbf{H3})] The (linear-sublinear) Hamiltonian term \(\mathscr{H}:\mathbb{R}^{n}\times \mathrm{B}_{1}\to \mathbb{R}\) is given by
\[
\mathscr{H}(\xi,x)=\langle\mathfrak{B}(x),\xi\rangle|\xi|^{\theta}+\varrho(x)|\xi|^{\sigma},
\]
 where \( \sigma \in (\theta,   1+\theta)\), \(\mathfrak{B}\in \mathrm{C}^{0}(\overline{\mathrm{B}_{1}};\mathbb{R}^{n})\), and \(\varrho\in \mathrm{C}^{0}(\overline{\mathrm{B}_{1}})\).
 \item[(\textbf{H4})] The source term \( f: \mathrm{B}_1 \to \mathbb{R} \) satisfies \( f \in L^{\infty}(\mathrm{B}_1) \cap \mathrm{C}^{0}(\mathrm{B}_1) \).
\end{itemize}

Additionally, we impose the boundary condition
\begin{equation}\label{Bound-Cond}
 u(x) = g(x) \quad \text{on } \partial \Omega,
\end{equation}
where the boundary datum is assumed to be continuous, i.e., \( g \in \mathrm{C}^0(\partial \Omega) \).

This work extends the theory of regularity for quasilinear elliptic equations in non-divergence form by addressing scenarios where the first-order term exhibits linear or sublinear growth—a setting that remains less understood and more challenging compared to the inhomogeneous problem without first-order contributions (cf. \cite{Attou18}). Our results have implications for the qualitative behavior of solutions to such PDEs and contribute to the broader understanding of nonlinear elliptic models and related geometric free boundary problems (see, for instance, Theorems \ref{exist_uniq}, \ref{Higher_continuity}, and \ref{NDHTE}).

It is important to emphasize the absence of a variational structure in equation \eqref{Def-Norm-p-Lap}, which represents a uniformly elliptic operator in trace form. This contrasts with the classical \( p \)-Laplacian, a quasilinear operator in divergence form.

We assert that the normalized \( p \)-Laplacian is “uniformly elliptic” in the sense that
\[
\mathcal{M}^{-}_{\lambda, \Lambda}(D^2 u) \leq \Delta_p^{\mathrm{N}} u \leq \mathcal{M}^{+}_{\lambda, \Lambda}(D^2 u),
\]
where
\begin{equation}\label{Pucci}\tag{Pucci}
\mathcal{M}^{-}_{\lambda, \Lambda}(D^2 u) \coloneqq \inf_{\mathfrak{A} \in \mathcal{A}_{\lambda, \Lambda}} \text{tr}(\mathfrak{A} D^2 u) \quad \text{and} \quad 
\mathcal{M}^{+}_{\lambda, \Lambda}(D^2 u) \coloneqq \sup_{\mathfrak{A} \in \mathcal{A}_{\lambda, \Lambda}} \text{tr}(\mathfrak{A} D^2 u)
\end{equation}
are the \textit{Pucci extremal operators}, and for \( 0< \lambda\leq \Lambda< \infty \),
\[
\mathcal{A}_{\lambda, \Lambda} \defeq \left\{\mathfrak{A} \in \text{Sym}(n): \,\, \lambda|\xi|^2 \leq \left\langle  \mathfrak{A} \xi, \xi \right\rangle \leq \Lambda|\xi|^2 \quad \text{for all } \xi \in \mathbb{R}^n \right\},
\]
where \( \text{Sym}(n) \) denotes the set of symmetric \( n \times n \) matrices.

It is well known that the Normalized \( p \)-Laplacian operator can be written in the form
\[
\Delta_p^{\mathrm{N}} u = \mathrm{tr}\left[\left(\mathrm{Id}_n + (p - 2) \frac{\nabla u}{|\nabla u|} \otimes \frac{\nabla u}{|\nabla u|}\right) D^2 u\right],
\]
from which one can readily verify that 
\[
\lambda = \lambda^{\mathrm{N}}_p = \min\left\{1, p - 1\right\} \quad \text{and} \quad \Lambda = \Lambda^{\mathrm{N}}_p = \max\left\{1, p - 1\right\}
\]
are its smallest and largest eigenvalues.

In summary, this work establishes the following results concerning \eqref{Problem}:

\begin{enumerate}
\item[\checkmark] Existence of viscosity solutions (via a Comparison Principle -Lemma \ref{Comp-Princ});
\item[\checkmark] Compactness of viscosity solutions (via an adjusted Ishii–Lions method - Lemma \ref{Holderest});
\item[\checkmark] Approximation schemes for translated solutions (deviation by planes) - Lemma \ref{approxlemma}; 
\item[\checkmark] Gradient estimates for viscosity solutions - Theorem \ref{Thm1.1};
\item[\checkmark] Optimal estimates along specific sets and for planar solutions - Corollaries \ref{Corol01} and \ref{Corol02};
\item[\checkmark] Optimal estimates at critical points of existing solutions - Theorem \ref{Opt_reg-Extremum-point};  
\item[\checkmark] Non-degeneracy estimates for solutions - Theorem \ref{Thm-Non-Deg};
\item[\checkmark] A strong maximum principle and a Hopf-type lemma for equations with borderline-homogeneous first-order terms - Theorem \ref{thm:SMP};
\item[\checkmark] Existence and uniqueness of solutions to Hénon-type problems with strong absorption - Theorem \ref{exist_uniq}; 
\item[\checkmark] Optimal estimates for related Hénon-type problems with strong absorption - Theorem \ref{Higher_continuity};
\item[\checkmark] Non-degeneracy of solutions to Hénon-type problems - Theorem \ref{NDHTE}.
\end{enumerate}

To the best of our knowledge, these aspects have not yet been thoroughly addressed in the existing literature and play a pivotal role in the regularity theory of nonlinear models. Consequently, our results contribute to the development of the theory of quasilinear partial differential equations in non-divergence form and their associated topics, including geometric free boundary problems, blow-up analysis, and classification of global profiles (Liouville-type results).

\medskip

Before stating our main results, we introduce some notations and function classes that will be useful throughout our analysis.

For subsequent developments, for \( p \in (1, \infty) \), we define the class:
\[
\mathbb{H}_{n}^{(p)} \defeq
\left\{
\mathfrak{h} \in W^{1, p}(\mathrm{B}_1) \cap \mathrm{C}^0(\mathrm{B}_1) \,\, \middle| \,\, -\Delta_p \mathfrak{h} = 0 \quad \text{in} \quad \mathrm{B}_1 \subset \mathbb{R}^n \text{ (in the weak sense)}
\right\}.
\]

We note that, although this constitutes a broad class of functions, it is well established (see Section \ref{Section02} for a detailed discussion) that there exists a universal modulus of continuity for the gradient of functions in \( \mathbb{H}_{n}^{(p)} \) (see, for instance, \cite{DiBe}, \cite{DuzMing10}, \cite{DuzMing11}, \cite{Lewis83}, \cite{Tolks}, and \cite{Ural68} for an incomplete list of contributions). Specifically, if \( \mathfrak{h} \in \mathbb{H}_{n}^{(p)} \), then there exist constants \( \mathrm{C}^{\star} > 0 \) and \( \alpha^{\star} \in (0, 1) \), depending only on the dimension and on \( p \), such that
\begin{equation}\label{EqHomogeneous}
\|\mathfrak{h}\|_{\mathrm{C}^{1,\alpha^{\star}}(\mathrm{B}_{1/2})} \leq \mathrm{C}^{\star} \cdot \|\mathfrak{h}\|_{L^{\infty}(\mathrm{B}_1)}.
\end{equation}

Additionally, it is important to highlight that the exponent \( \alpha^{\star} \in (0, 1) \) represents a theoretical bound in the regularity theory of weak solutions to
\[
- \Delta_p \mathfrak{h} = 0 \quad \text{in} \quad \mathrm{B}_1,
\]
which can be characterized as follows:
\begin{equation}\label{Def_alpha_Hom}
\alpha_{\mathrm{Hom}}^{(p)} \defeq
\sup \left\{
\begin{array}{ccc}
\alpha^{\star} \in \left(0, 1\right) & | & \text{there exists} \,\,\, \mathrm{C}_{n, p, \alpha^{\star}} > 0 \, \text{such that} \, \eqref{EqHomogeneous} \\
& &  \text{holds with} \,\,\, \mathrm{C}_{n, p, \alpha^{\star}} = \mathrm{C}^{\star} \, \text{for all} \, \mathfrak{h} \in \mathbb{H}_{n}^{(p)} 
\end{array}
\right\}.
\end{equation}

The following result, established by Siltakoski, provides a fundamental connection between viscosity solutions of the (homogeneous) normalized \( p \)-Laplacian and weak solutions of the (homogeneous) \( p \)-Laplacian.

\begin{theorem}[{\bf \cite[Theorem 5.9]{Siltak18}}]\label{Thm-Equiv-Sol}
A function \( u \in \mathrm{C}^0(\Omega)\) is a viscosity solution to  
\[
- \Delta^{\mathrm{N}}_{p} u = 0 \quad \text{in } \quad \Omega
\]  
if and only if it is a weak solution to  
\[
- \Delta_{p} u = 0 \quad \text{in } \quad \Omega.
\]
\end{theorem}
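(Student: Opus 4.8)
The plan is to prove the two implications by a \emph{solve-and-compare} argument, which reduces the equivalence to three ingredients: (i) the one-sided statement that every weak solution of $-\Delta_p\mathfrak{h}=0$ is a viscosity solution of $-\Delta_{p}^{\mathrm N}\mathfrak{h}=0$; (ii) a comparison principle for viscosity sub/supersolutions of the homogeneous normalized equation---a uniformly elliptic operator in the sense of the \eqref{Pucci} bounds, which is Lemma \ref{Comp-Princ} applied with $\mathfrak{B}\equiv 0$, $\varrho\equiv 0$ and $f\equiv 0$; and (iii) the classical existence, uniqueness and boundary continuity of weak solutions to the Dirichlet problem for $-\Delta_p$ on balls. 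Granting these, the theorem follows at once, so the substance of the work lies in (i).

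For (i), I would first record the elementary identity $\Delta_p\varphi=|\nabla\varphi|^{p-2}\,\Delta_p^{\mathrm N}\varphi$ on the set $\{\nabla\varphi\neq 0\}$ for $\varphi\in\mathrm{C}^2$, which is immediate from \eqref{Def-Norm-p-Lap}. Since weak solutions of $-\Delta_p\mathfrak{h}=0$ belong to $\mathrm{C}^{1,\alpha}_{\mathrm{loc}}$ and solve the equation classically wherever the gradient does not vanish, the sub/supersolution conditions for $-\Delta_p^{\mathrm N}\mathfrak{h}=0$ hold immediately at any touching point $x_0$ with $\nabla\varphi(x_0)\neq 0$: the positive factor $|\nabla\varphi(x_0)|^{p-2}$ does not change the sign of the inequality. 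The delicate case is a test function $\varphi\in\mathrm{C}^2$ touching $\mathfrak{h}$ from below (resp.\ above) at a point $x_0$ with $\nabla\varphi(x_0)=0$. There $\Delta_p^{\mathrm N}$ is discontinuous in the gradient slot, so the viscosity requirement must be read through the lower (resp.\ upper) semicontinuous envelope of the operator, which imposes a one-sided inequality relating $\operatorname{tr}(D^2\varphi(x_0))$ to the extreme eigenvalue of $D^2\varphi(x_0)$ selected by the sign of $p-2$. I would check this condition by the standard perturbation device: arguing by contradiction, replace $\varphi$ with $\varphi(x)+\eta(|x-x_0|)$ for a suitably tuned smooth radial corrector $\eta$ that preserves the touching, forces $\nabla(\varphi+\eta)\neq 0$ on a punctured ball about $x_0$, and turns $\varphi+\eta$ into a classical strict super-/subsolution of $-\Delta_p$ there; a comparison (or Caccioppoli-type) argument for the $p$-Laplacian on that punctured ball then contradicts $\mathfrak{h}$ being a weak solution. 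This is precisely where the normalized equation genuinely departs from its divergence-form counterpart---for $-\Delta_p$ the envelope condition at critical test points is frequently vacuous---so the equivalence is not a formality.

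With (i)--(iii) in hand, the implication ``normalized-viscosity $\Rightarrow$ weak'' runs as follows. Let $u\in\mathrm{C}^0(\Omega)$ be a viscosity solution of $-\Delta_p^{\mathrm N}u=0$ and fix an arbitrary ball $\mathrm{B}=\mathrm{B}_r(x_0)\Subset\Omega$. By (iii) there is a unique $v\in W^{1,p}(\mathrm{B})\cap\mathrm{C}^0(\overline{\mathrm{B}})$ with $-\Delta_p v=0$ weakly in $\mathrm{B}$ and $v=u$ on $\partial\mathrm{B}$; by (i), $v$ is a viscosity solution of $-\Delta_p^{\mathrm N}v=0$ in $\mathrm{B}$. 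Since $u$ is a viscosity subsolution and $v$ a viscosity supersolution of $-\Delta_p^{\mathrm N}\,\cdot=0$ in $\mathrm{B}$ with $u=v$ on $\partial\mathrm{B}$, the comparison principle (ii) gives $u\le v$ in $\mathrm{B}$; reversing the roles gives $u\ge v$, hence $u\equiv v$ in $\mathrm{B}$. As $\mathrm{B}\Subset\Omega$ was arbitrary and being a weak solution is a local property, $u\in W^{1,p}_{\mathrm{loc}}(\Omega)$ solves $-\Delta_p u=0$ weakly in $\Omega$. The converse implication is precisely (i). One could alternatively follow \cite[Theorem 5.9]{Siltak18} and prove directly that viscosity solutions of $-\Delta_p^{\mathrm N}\,\cdot=0$ are viscosity solutions of $-\Delta_p\,\cdot=0$, then invoke the classical Juutinen--Lindqvist--Manfredi equivalence; the solve-and-compare route above makes the role of the critical-point analysis most transparent.

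The main obstacle is exactly that critical-point analysis in step (i): $\Delta_p^{\mathrm N}$ is discontinuous at $\nabla\varphi=0$ and carries a nontrivial semicontinuous envelope there, so the corrector $\eta$ must be calibrated to the sign of $p-2$ and to the eigenvalue structure of $D^2\varphi(x_0)$, in a regime where $-\Delta_p$ is simultaneously degenerate and singular. The comparison principle (ii) conceals a milder form of the same difficulty, which is resolved by the doubling-of-variables method with an Ishii--Lions correction term, as in the general comparison result of this paper.
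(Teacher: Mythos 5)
The paper does not prove this statement at all: it is imported verbatim from Siltakoski \cite[Theorem 5.9]{Siltak18}, so there is no internal proof to compare against. Judged on its own terms, your architecture --- (i) weak $\Rightarrow$ normalized-viscosity, (ii) comparison for the homogeneous normalized equation, (iii) solvability of the $p$-Dirichlet problem on balls, then solve-and-compare --- is a legitimate and standard route (it is essentially the Juutinen--Lindqvist--Manfredi scheme transplanted to $\Delta_p^{\mathrm N}$), and you correctly identify that the real content sits at critical points of the test function, where the semicontinuous envelopes of $\Delta_p^{\mathrm N}$ are nontrivial while those of $\Delta_p$ are often vacuous. Siltakoski's own proof instead shows that viscosity solutions of the normalized equation coincide with viscosity solutions of the strong-form $p$-Laplace equation and then invokes the known weak/viscosity equivalence for $-\Delta_p$; your solve-and-compare variant is a genuinely different decomposition but trades the critical-point analysis for a comparison principle, which is where the problem lies.

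The concrete gap is step (ii). You propose to obtain comparison for $-\Delta_p^{\mathrm N}$ with zero right-hand side by applying Lemma \ref{Comp-Princ} with $\mathfrak{B}\equiv 0$, $\varrho\equiv 0$, $\mathfrak{c}\equiv 0$ and $\mathfrak{f}_1=\mathfrak{f}_2=0$. That lemma requires either \textbf{(a)} $\mathfrak{c}<0$ with $\mathfrak{f}_1\ge\mathfrak{f}_2$ or \textbf{(b)} $\mathfrak{c}\le 0$ with $\mathfrak{f}_1>\mathfrak{f}_2$; with $\mathfrak{c}\equiv 0$ and $\mathfrak{f}_1=\mathfrak{f}_2$ neither holds, and indeed the doubling argument in its proof terminates in $\mathfrak{f}_1(x_0)-\mathfrak{f}_2(x_0)\le 0$, which is no contradiction when the data coincide. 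This is not a bookkeeping issue: comparison for the \emph{homogeneous, proper-but-not-strictly-proper} normalized $p$-Laplacian is itself a nontrivial theorem (there is no zeroth-order strictness to exploit, and the operator's $1$-homogeneity defeats the usual $(1+\varepsilon)u$ perturbation), and in the literature it is typically proved either alongside, or as a consequence of, the very equivalence you are trying to establish --- so invoking it here risks circularity. To make your route rigorous you must either (a) prove comparison of a continuous viscosity subsolution of $-\Delta_p^{\mathrm N}\cdot=0$ against a $p$-harmonic function directly (e.g.\ by perturbing the $p$-harmonic replacement into a strict classical supersolution with nonvanishing gradient, which is delicate precisely at its critical set), or (b) abandon solve-and-compare and follow Siltakoski's path through the strong form of $-\Delta_p$. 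Step (i) as you sketch it is sound in outline, but the radial corrector $\eta$ and the punctured-ball comparison also need to be written out; as it stands the proposal is a correct plan with its hardest lemma unproved.
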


This result serves as a key tool to relate the regularity estimates for viscosity solutions of \eqref{Problem} with known regularity results for the associated homogeneous geometric tangential equation driven by the \( p \)-Laplacian (via an approximation scheme), which enjoys a well-developed elliptic regularity theory.  

\subsection*{Statement of the main results and some consequences}

In our first result, we will establish the existence of viscosity solutions to \eqref{Problem}.

\begin{theorem}[\bf Existence of Solutions]\label{Exist_Uniq_sol}
Suppose the assumptions $(\mathbf{H1})-(\mathbf{H4})$ are satisfied. Additionally, assume that either $\displaystyle \inf_{\Omega} f(x) > 0$ or $\displaystyle \sup_{\Omega} f(x) < 0$. Suppose that there exists a viscosity subsolution $u_{\flat} \in \mathrm{C}^0(\Omega) \cap \mathrm{C}_{\text{loc}}^{0,1}(\Omega)$ and a viscosity supersolution $u^{\sharp} \in \mathrm{C}^0(\Omega) \cap \mathrm{C}_{\text{loc}}^{0,1}(\Omega)$ to the equation \eqref{Problem} such that $u_{\flat} = u^{\sharp} = g \in \mathrm{C}^0(\partial \Omega)$. Define the class of functions
\begin{equation}
\mathcal{S}_g(\Omega) := \left\{ v \in \mathrm{C}^0(\Omega) \mid v \text{ is a viscosity supersolution to } \eqref{Problem}, \right. 
\end{equation}
\begin{equation*}
    \left. \text{such that } u_{\flat} \leq v \leq u^{\sharp} \text{ and } v = g \text{ on } \partial \Omega \right\}.
\end{equation*}
Then, the function
\begin{equation}
    u(x) := \inf_{\mathcal{S}_g(\Omega)} v(x), \quad \text{for } x \in \Omega,
\end{equation}
is a continuous (up to the boundary) viscosity solution to the problem
\begin{equation}
\left\{
    \begin{array}{rclcl}
        |\nabla u(x)|^{\theta} \left( \Delta_p^{\mathrm{N}} u(x) + \langle \mathfrak{B}(x), \nabla u \rangle \right) + \varrho(x) |\nabla u(x)|^{\sigma} & = & f(x) & \text{in } & \Omega, \\
        u(x) & = & g(x) & \text{on }& \partial \Omega.
    \end{array}
\right.
\end{equation}
\end{theorem}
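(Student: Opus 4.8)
The plan is to construct the solution via Perron's method, following the classical framework of Ishii adapted to the degenerate quasilinear setting at hand. The argument splits into three stages: (i) showing that the infimum $u$ over the Perron class $\mathcal S_g(\Omega)$ is well-defined and sandwiched between $u_\flat$ and $u^\sharp$; (ii) proving that $u$ is a viscosity supersolution; and (iii) proving that $u$ is a viscosity subsolution. Continuity up to the boundary will follow from the squeeze $u_\flat \le u \le u^\sharp$ together with $u_\flat = u^\sharp = g$ on $\partial\Omega$, which forces $u$ to attain the boundary data continuously; interior continuity will follow a posteriori from the local Hölder/gradient estimates (Lemma \ref{Holderest} and Theorem \ref{Thm1.1}), or can be obtained directly from the Perron construction by the usual semicontinuous envelope argument.

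First I would verify that $\mathcal S_g(\Omega)$ is nonempty — indeed $u^\sharp \in \mathcal S_g(\Omega)$ — and that every $v \in \mathcal S_g(\Omega)$ satisfies $u_\flat \le v \le u^\sharp$ by construction, so $u := \inf_{\mathcal S_g} v$ is a bounded function with $u_\flat \le u \le u^\sharp$. Next, the key structural input is that the operator
\[
F(x,\xi,X) := |\xi|^\theta\big(\operatorname{tr}[(\mathrm{Id}_n + (p-2)\tfrac{\xi}{|\xi|}\otimes\tfrac{\xi}{|\xi|})X] + \langle \mathfrak B(x),\xi\rangle\big) + \varrho(x)|\xi|^\sigma
\]
is degenerate elliptic and continuous off $\{\xi = 0\}$, with a removable-singularity structure at $\xi = 0$ (here the hypothesis $\sigma \in (\theta, \theta+1)$ and $\theta > 0$ guarantees that the relevant test-function inequalities degenerate in a controlled way as $\xi \to 0$, so that the notion of viscosity solution is the standard one with the usual convention at gradient-vanishing points). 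Given this, the Perron machinery applies: the supersolution property of $u$ follows from the standard fact that an infimum of a family of supersolutions, after taking the lower semicontinuous envelope $u_*$, is again a supersolution, and the sandwich bound together with $u_\flat$, $u^\sharp$ sharing the boundary data pins down $u_* = u = u^*$ on $\partial\Omega$.

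The subsolution property is the main obstacle, as always in Perron's method: one argues by contradiction. If $u^*$ fails to be a subsolution at some interior point $x_0$ where it is touched from above by a smooth $\varphi$ with $F(x_0, D\varphi(x_0), D^2\varphi(x_0)) > 0$ strictly, then one performs the classical "bump" construction — replacing $u$ near $x_0$ by $\max\{u, \varphi + \text{small constant}\}$ — to produce a supersolution in $\mathcal S_g(\Omega)$ strictly below $u$ at nearby points, contradicting minimality. The delicate point is to ensure the bumped function still lies above $u_\flat$ (so it genuinely belongs to $\mathcal S_g(\Omega)$) and remains a supersolution across the gluing interface; the former is handled by choosing the bump small enough and noting $u^*(x_0) > u_\flat(x_0)$ cannot fail in the relevant regime (if $u^*(x_0) = u_\flat(x_0)$ then the subsolution $u_\flat$ already provides the test inequality in the right direction since $u_\flat$ is a subsolution and $u_\flat \le u^*$), and the latter is standard since the max of two supersolutions is a supersolution. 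Throughout, I would invoke the Comparison Principle (Lemma \ref{Comp-Princ}) to guarantee that the construction does not produce inconsistencies, and use the strict sign condition $\inf_\Omega f > 0$ or $\sup_\Omega f < 0$ — this is precisely what makes the comparison principle available for this degenerate operator with a merely continuous (not Lipschitz) Hamiltonian, since it rules out the pathological case where both competitors have vanishing gradient on a large set. I expect the bookkeeping around the gradient-vanishing set $\{\nabla\varphi = 0\}$ in the bump construction to require the most care.
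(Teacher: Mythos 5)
Your overall route---Perron's method over the class $\mathcal{S}_g(\Omega)$ of supersolutions, with the Comparison Principle (Lemma \ref{Comp-Princ}) and the strict sign condition on $f$ making the machinery admissible for this degenerate operator---is exactly the paper's route; indeed the paper gives no more detail than ``by Perron's method and the Comparison Principle, existence follows,'' so your sketch is already more explicit than the source. Your treatment of the boundary continuity (squeeze between $u_\flat$ and $u^\sharp$) and of the case $u^*(x_0)=u_\flat(x_0)$ (where $\varphi$ tests the subsolution $u_\flat$ and yields the subsolution inequality directly) are both correct and are the right ingredients.

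However, the one step you spell out in detail---the bump construction---is written with the directions reversed, and as literally stated it would fail. Since $u$ is an \emph{infimum} of supersolutions, contradicting minimality requires exhibiting an admissible supersolution lying strictly \emph{below} $u$ somewhere; the function $\max\{u,\varphi+\text{const}\}$ is $\geq u$ everywhere and can never do this. Moreover, the maximum of two supersolutions is in general \emph{not} a supersolution: it is the \emph{minimum} of two supersolutions that is again a supersolution (the maximum preserves subsolutions). The correct bump for the infimum version runs as follows: from the strict failure $F(x_0,\nabla\varphi(x_0),D^2\varphi(x_0))<f(x_0)$ one passes to $\varphi_\delta:=\varphi-\delta+c|x-x_0|^2$, which for small $\delta$, $c$, $r$ is a strict classical supersolution in $\mathrm{B}_r(x_0)$, satisfies $\varphi_\delta\geq u$ near $\partial\mathrm{B}_r(x_0)$ (so that $w:=\min\{u,\varphi_\delta\}$ in $\mathrm{B}_r(x_0)$, $w:=u$ outside, glues to a supersolution), stays above $u_\flat$ by your observation that $u^*(x_0)>u_\flat(x_0)$, and dips below $u$ along a sequence $x_n\to x_0$ realizing $u^*(x_0)$---the desired contradiction. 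One further caveat: Lemma \ref{Comp-Princ} as stated requires both competitors to be locally Lipschitz, so it cannot be applied to arbitrary members of $\mathcal{S}_g(\Omega)$; in the Perron argument you should only invoke comparison against the explicit $\mathrm{C}^2$ bump or against $u_\flat$, $u^\sharp$, which are assumed to lie in $\mathrm{C}^{0,1}_{\mathrm{loc}}(\Omega)$. With these corrections your argument coincides with the paper's intended (unwritten) proof.
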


\begin{example}[{\bf Non-Uniqueness of solutions}]
In general, the uniqueness of solutions to the Dirichlet problem associated with equation \eqref{Problem} may fail when the source term does not have a definite sign. For example, consider \(u(x) = 0\) and \(v(x) = \mathrm{c}(1 - |x|^{\beta})\) in \(\mathrm{B}_1\), where \(\beta = \frac{2 + \theta}{1 + \theta}\) and \(\mathrm{c} = \beta^{-1}\). Then, observe that both \(u\) and \(v\) are viscosity solutions to
$$
\left\{
\begin{array}{rclcl}
|\nabla w|^{\theta} \Delta_p^{\mathrm{N}} w+\mathscr{H}(\nabla w,x) & = & 0 & \text{in } &B_{1}, \\
w(x) & = & 0 & \text{on }  & \partial B_{1},
\end{array}
\right.
$$
where \(\mathfrak{B}(x) = \frac{p-2}{1+\theta}|x|x\), \(\varrho(x) = \frac{n+(n-1)\theta}{1+\theta}|x|^{1+\theta-\sigma}\), and \(\theta < \sigma < 1 + \theta\).
\end{example}

In our next result, we establish a gradient regularity estimate for solutions to \eqref{Problem}, namely, $\mathrm{C}_{\text{loc}}^{1, \alpha}$-estimates (for some universal $\alpha \in (0, 1)$).

\begin{theorem}[{\bf Gradient Regularity Estimate}]\label{Thm1.1}
Suppose that the conditions $(\mathbf{H1})-(\mathbf{H4})$ are satisfied and let $u \in \mathrm{C}^0(\mathrm{B}_1)$ be a bounded viscosity solution to problem \eqref{Problem}. Given an
\[
\alpha \in \left( 0, \alpha_{\mathrm{Hom}}^{(p)} \right) \cap \left( 0, \frac{1}{1 + \theta} \right]
\]
we have that \(u \in \mathrm{C}_{\text{loc}}^{1, \alpha}(\mathrm{B}_1)\). Moreover, the following regularity estimate holds:
$$
[u]_{\mathrm{C}^{1, \alpha}(\mathrm{B}_{1/2})} \leq \mathrm{C} \left( \|u\|_{L^{\infty}(\mathrm{B}_{1})} + \|\varrho\|^{\frac{1}{1+\theta-\sigma}}_{L^{\infty}(\mathrm{B}_{1})} + \|f\|^{\frac{1}{1+\theta}}_{L^{\infty}(\mathrm{B}_{1})} \right)
$$ 
where \(\mathrm{C} > 0\) depends only on \(n\), \(p\), \(\sigma\), \(\theta\), and \(\|\mathfrak{B}\|_{L^{\infty}(\mathrm{B}_1; \mathbb{R}^{n})}\).
\end{theorem}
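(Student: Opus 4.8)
The plan is to prove Theorem~\ref{Thm1.1} via the classical geometric tangential analysis (also known as the \emph{compactness method} or \emph{improvement-of-flatness} scheme). The heart of the matter is an iterative argument: at each scale one approximates the (suitably rescaled) solution by a $p$-harmonic function, borrows the $\mathrm{C}^{1,\alpha^\star}$-estimate \eqref{EqHomogeneous} for that function, and transfers it back to $u$ with a controlled loss, so that after infinitely many steps one obtains a first-order Taylor expansion of $u$ at every point with a uniform $\mathrm{C}^{1,\alpha}$ modulus.

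\begin{proof}[Proof of Theorem~\ref{Thm1.1}]
\textbf{Step 1 (Reduction).} By the a priori Hölder estimate for the gradient (via the adjusted Ishii--Lions method, Lemma~\ref{Holderest}) we already know $u \in \mathrm{C}^{0,1}_{\mathrm{loc}}$, so it suffices to upgrade this to $\mathrm{C}^{1,\alpha}_{\mathrm{loc}}$ with the claimed estimate. After the standard normalization --- replacing $u$ by $\kappa u(x_0 + r x)$ for small $r$ and a small constant $\kappa$ depending on the right-hand side of the asserted estimate, which rescales $f$, $\varrho$, $\mathfrak B$ appropriately (here the conditions $\theta>0$ and $\sigma\in(\theta,\theta+1)$ guarantee the scaling exponents $\tfrac1{1+\theta}$ and $\tfrac1{1+\theta-\sigma}$ appearing in the statement are the correct ones, and that the rescaled Hamiltonian data shrink) --- it is enough to prove: if $\|u\|_{L^\infty(\mathrm B_1)}\le 1$ and $\|f\|_{L^\infty},\|\varrho\|_{L^\infty},\|\mathfrak B\|_{L^\infty}\le\varepsilon_0$ for a universal $\varepsilon_0$, then $u$ has a universal $\mathrm{C}^{1,\alpha}$-bound at the origin.

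\textbf{Step 2 (Oscillation decay for deviation by planes).} Fix $\alpha\in(0,\alpha_{\mathrm{Hom}}^{(p)})\cap(0,\tfrac1{1+\theta}]$ and choose $\mu\in(0,1/2)$ so small that $\mathrm C^\star\mu^{\alpha^\star}\le\tfrac12\mu^{\alpha}$, where $\mathrm C^\star,\alpha^\star$ come from \eqref{EqHomogeneous} with $\alpha<\alpha^\star<\alpha_{\mathrm{Hom}}^{(p)}$. I claim there exist a universal $\varepsilon_0>0$ and a sequence of affine functions $\ell_k(x)=a_k+\langle b_k,x\rangle$ such that
\begin{equation*}
\sup_{\mathrm B_{\mu^k}}|u-\ell_k|\le \mu^{k(1+\alpha)},\qquad |a_{k+1}-a_k|+\mu^k|b_{k+1}-b_k|\le \mathrm C\,\mu^{k(1+\alpha)}.
\end{equation*}
The proof is by induction on $k$. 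The base case $k=0$ is trivial ($\ell_0\equiv 0$). For the inductive step, set $v_k(x)=\mu^{-k(1+\alpha)}(u-\ell_k)(\mu^k x)$ on $\mathrm B_1$; this is bounded by $1$ and solves an equation of the same structural type \eqref{Problem} but with data multiplied by decaying powers of $\mu$ (here one uses $\alpha\le\tfrac1{1+\theta}$, which is exactly what forces the transformed source term not to blow up --- the gradient term $b_k$ plays the role of a large translation, but since $\Delta_p^{\mathrm N}$ and the Hamiltonian are translation-invariant in the gradient up to lower-order, the relevant quantities still contract). By the Approximation Lemma~\ref{approxlemma}, choosing $\varepsilon_0$ small, $v_k$ is $\delta$-close in $L^\infty(\mathrm B_{1/2})$ to some $\mathfrak h\in\mathbb H_n^{(p)}$ (after a harmless further normalization of $\mathfrak h$), with $\delta=\delta(\mu)$ to be fixed. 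Using \eqref{EqHomogeneous}, the affine function $\tilde\ell(x)=\mathfrak h(0)+\langle\nabla\mathfrak h(0),x\rangle$ satisfies $\sup_{\mathrm B_\mu}|\mathfrak h-\tilde\ell|\le\mathrm C^\star\mu^{1+\alpha^\star}$; hence $\sup_{\mathrm B_\mu}|v_k-\tilde\ell|\le \mathrm C^\star\mu^{1+\alpha^\star}+\delta\le\mu^{1+\alpha}$ provided $\delta\le\tfrac12\mu^{1+\alpha}$ and $\mu$ is as chosen. Rescaling back and setting $\ell_{k+1}=\ell_k+\mu^{k(1+\alpha)}\tilde\ell(\mu^{-k}\cdot)$ closes the induction; the control on $|a_{k+1}-a_k|,|b_{k+1}-b_k|$ follows from interior estimates for $\mathfrak h$.

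\textbf{Step 3 (Conclusion).} The coefficient bounds in Step 2 show $(b_k)$ is Cauchy, converging to some $b_\infty$ with $|b_\infty|\le\mathrm C$, and that $\sup_{\mathrm B_r}|u-a_\infty-\langle b_\infty,x\rangle|\le\mathrm C r^{1+\alpha}$ for all small $r$. This is a pointwise $\mathrm C^{1,\alpha}$-expansion at the origin; since the origin can be replaced by any interior point and all constants are universal, a standard covering/translation argument together with the characterization of $\mathrm{C}^{1,\alpha}$ via affine approximation at every point yields $u\in\mathrm C^{1,\alpha}_{\mathrm{loc}}(\mathrm B_1)$ with the stated estimate on $\mathrm B_{1/2}$, after undoing the normalization of Step~1 (which is what produces the three additive terms $\|u\|_{L^\infty}$, $\|\varrho\|_{L^\infty}^{1/(1+\theta-\sigma)}$, $\|f\|_{L^\infty}^{1/(1+\theta)}$).
\end{proof}

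The step I expect to be the main obstacle is the inductive step of Step~2, specifically verifying that the rescaled functions $v_k$ still satisfy an equation to which the Approximation Lemma applies \emph{uniformly in $k$}: one must check that subtracting the affine $\ell_k$ (whose slope $b_k$ is a priori only bounded, not small) and rescaling keeps the coefficients of the Hamiltonian and the source under the threshold $\varepsilon_0$. This is precisely where the hypothesis $\sigma\in(\theta,1+\theta)$ and the restriction $\alpha\le\tfrac1{1+\theta}$ are used, and getting the bookkeeping of scaling exponents exactly right --- so that $|\nabla u|^\theta$ in front of the diffusion does not destroy the contraction --- is the delicate point; this is also where the paper's ``novel perturbative application of the Ishii--Lions technique'' enters, since the naive approximation by $p$-harmonic functions must be stabilized against the first-order term.
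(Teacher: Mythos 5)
Your proposal is correct and follows essentially the same route as the paper: a normalization/scaling reduction, followed by the inductive affine-approximation scheme of Proposition~\ref{Prop4.1} built on the Approximation Lemma~\ref{approxlemma} for translated equations (where the unbounded slopes $b_k$ are absorbed into the vector $\vec q$ and the smallness is demanded of $\|\mathfrak B\|(1+|\vec q|)$ and $\|\varrho\|(1+|\vec q|^{\sigma-\theta})$, exactly the bookkeeping you flag as the delicate point). The only minor imprecisions are that Lemma~\ref{Holderest} yields $\mathrm C^{0,\mu}$ for the solution (not a Lipschitz or gradient estimate), and that the approximating profile need not lie in $\mathbb H_n^{(p)}$ when the translations blow up --- it is then harmonic for a constant-coefficient operator, which only improves the regularity used.
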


\bigskip

It is worth emphasizing that our method for establishing gradient regularity estimates emerges naturally from a refined argument involving controlled translations (inspired by \cite{daSNorb21}), where we approximate solutions by affine functions. The proof strategy involves four fundamental steps:

\begin{itemize}
    \item[\textbf{Step 1.}] Compactness properties of translated solutions under appropriate control conditions on the coefficients of the Hamiltonian $\mathscr{H}$:
    $$
    |\nabla w + \vec{q}|^{\theta} \Delta_{p}^{\mathrm{N}}(w + \langle \vec{q}, x \rangle) + \mathscr{H}(\nabla w + \vec{q}, x) = \bar{f}(x) \quad \text{in} \quad \mathrm{B}_1, \quad \text{with} \quad \vec{q} \in \mathbb{R}^n.
    $$   
    Moreover, for some universal $\eta_0 > 0$ and $\mu \in (0, 1)$, the following holds via the Ishii-Lions Lemma:
    $$
    \max \left\{ \|\mathfrak{B}\|_{L^{\infty}(\mathrm{B}_1; \mathbb{R}^n)}(1 + |\vec{q}|), \|\varrho\|_{L^{\infty}(\mathrm{B}_1)}(1 + |\vec{q}|^{\sigma - \theta}) \right\} \leq \eta_0 \quad \Rightarrow \quad [w]_{\mathrm{C}^{0, \mu}(\mathrm{B}_{\frac{15}{16}})} \leq \mathrm{C} < \infty.
    $$

    \item[\textbf{Step 2.}] Under suitable smallness assumptions on both the source term $f$ and the components of the Hamiltonian, i.e., $\mathfrak{B}$ and $\varrho$, solutions to the translated problem admit approximation by $p$-harmonic or harmonic profiles:
    $$
    |\nabla w + \vec{q}|^{\theta} \Delta_{p}^{\mathrm{N}}(w + \langle \vec{q}, x \rangle) + \mathscr{H}(\nabla w + \vec{q}, x) = \bar{f}(x) \quad \text{in} \quad \mathrm{B}_1, \quad \text{with} \quad \vec{q} \in \mathbb{R}^n.
    $$ 
    Furthermore, for some universal $\eta > 0$, we have the following:
    $$
    \max \left\{ \|\bar{f}\|_{L^{\infty}(\mathrm{B}_1)}, \|\mathfrak{B}\|_{L^{\infty}(\mathrm{B}_1; \mathbb{R}^n)}(1 + |\vec{q}|), \|\varrho\|_{L^{\infty}(\mathrm{B}_1)}(1 + |\vec{q}|^{\sigma - \theta}) \right\} \leq \eta \quad \Rightarrow \quad
    \left\{
    \begin{array}{l}
    \displaystyle \sup_{\mathrm{B}_{1/2}} |w - \mathfrak{h}| \ll 1, \\
    \text{where} \\
    \Delta_p \mathfrak{h} = 0 \\
    \text{or} \\
    \mathrm{tr}(\mathfrak{A} D^2 \mathfrak{h}) = 0.
    \end{array}
    \right.
    $$

    \item[\textbf{Step 3.}] By performing a geometric iteration, we establish pointwise $\mathrm{C}^{1,\alpha}$ regularity for solutions to the problem \eqref{Problem}: there exists a sequence of affine functions $\{\mathfrak{l}_k\}_{k \in \mathbb{N}}$ such that
    $$
    \max \left\{ \|f\|_{L^{\infty}(\mathrm{B}_1)}, \|\mathfrak{B}\|_{L^{\infty}(\mathrm{B}_1; \mathbb{R}^n)}, \|\varrho\|_{L^{\infty}(\mathrm{B}_1)} \right\} \leq \delta_0(\text{universal}).
    $$ 
    Then, we have
    $$
 \displaystyle\sup_{\mathrm{B}_{\rho^k}(x_0)} \frac{\left|u(x)-\mathfrak{l}_k(x)\right|}{\rho^{k(1+\alpha)}}\leq 1 \quad \stackrel[\Longrightarrow]{\text{Dini-Campanato }}{{\text{embedding}}} \,\,\,u \,\, \text{is} \,\,\,\mathrm{C}^{1, \alpha} \quad \text{at}\,\,\,x_0.
$$

    \item[\textbf{Step 4.}] A normalization, scaling process, and a standard covering argument reduce the general setting to the analysis of the previous steps.
\end{itemize}


As a result of Theorem \ref{Thm1.1}, we obtain sharp regularity estimates in some specific scenarios.

\begin{corollary}\label{Corol01}
Let \( \mathrm{B}_1 \subset \mathbb{R}^n \), and let \( u \in \mathrm{C}^0(\mathrm{B}_1) \) be a bounded viscosity solution of \eqref{Problem}. Suppose further that \( \alpha^{(p)}_{\mathrm{Hom}} \in \left(\frac{1}{1+\theta}, 1\right] \). Then, \( u \in \mathrm{C}_{\text{loc}}^{1, \frac{1}{1+\theta}}(\mathrm{B}_1) \) and
\[
[u]_{\mathrm{C}^{1, \frac{1}{\theta+1}}(\mathrm{B}_{1/2})} \leq \mathrm{C}(\text{universal}) \left( \|u\|_{L^{\infty}(\mathrm{B}_1)} + \|\varrho\|_{L^{\infty}(\mathrm{B}_1)}^{\frac{1}{1+\theta-\sigma}} + \|f\|_{L^\infty(\mathrm{B}_1)}^{\frac{1}{\theta+1}} \right).
\]
\end{corollary}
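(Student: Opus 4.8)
The plan is to extract the sharp exponent $\frac{1}{1+\theta}$ from Theorem \ref{Thm1.1} by a limiting argument in the free parameter $\alpha$, and then upgrade the mere $\mathrm{C}^{1,\alpha}$ conclusion (valid for every $\alpha<\frac{1}{1+\theta}$) to the endpoint $\alpha=\frac{1}{1+\theta}$ via the iteration already contained in the proof of Theorem \ref{Thm1.1}. First I would observe that under the hypothesis $\alpha^{(p)}_{\mathrm{Hom}}\in\left(\frac{1}{1+\theta},1\right]$ one has $\left(0,\alpha^{(p)}_{\mathrm{Hom}}\right)\cap\left(0,\frac{1}{1+\theta}\right]=\left(0,\frac{1}{1+\theta}\right]$, so that the borderline value $\alpha=\frac{1}{1+\theta}$ is itself an admissible exponent in Theorem \ref{Thm1.1} provided the geometric iteration (Step 3 in the sketch above) closes up at that exponent. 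Concretely, I would revisit the discrete iteration $\sup_{\mathrm{B}_{\rho^k}(x_0)}|u-\mathfrak{l}_k|\le \rho^{k(1+\alpha)}$: the induction step requires that the decay produced by one application of the approximation lemma (Lemma \ref{approxlemma}), namely a factor $\rho^{1+\beta}$ coming from the $\mathrm{C}^{1,\beta}$-regularity of the tangential $p$-harmonic profile $\mathfrak{h}$, dominate $\rho^{1+\alpha}$; since $\beta$ can be taken in $\left(\frac{1}{1+\theta},\alpha^{(p)}_{\mathrm{Hom}}\right)$ (strictly bigger than $\frac{1}{1+\theta}$, using precisely the assumption on $\alpha^{(p)}_{\mathrm{Hom}}$), the choice $\alpha=\frac{1}{1+\theta}$ is compatible and the geometric series still converges. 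The scaling obstruction that normally caps $\alpha$ at $\frac{1}{1+\theta}$ — the need for the rescaled solutions $u_k(x)=\frac{u(x_0+\rho^k x)-\mathfrak{l}_k(x_0+\rho^k x)}{\rho^{k(1+\alpha)}}$ to solve an equation of the same structural type with coefficients that do not blow up — is exactly balanced at $\alpha=\frac{1}{1+\theta}$ because of the homogeneity $\sigma\in(\theta,1+\theta)$ forcing $\sigma-\theta\in(0,1)$ and $\frac{1}{1+\theta-\sigma}$ finite; this is where the exponent $\frac{1}{1+\theta}$ is sharp and cannot be exceeded.

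Having fixed $\alpha=\frac{1}{1+\theta}$, the argument then proceeds exactly as in Steps 1–4 of Theorem \ref{Thm1.1}: a normalization reducing to the case $\|u\|_{L^\infty(\mathrm{B}_1)}\le 1$ and $\max\{\|f\|_{L^\infty},\|\mathfrak{B}\|_{L^\infty},\|\varrho\|_{L^\infty}\}\le\delta_0$ for a universal $\delta_0$; the compactness of translated solutions from Lemma \ref{Holderest}; the approximation by a $p$-harmonic $\mathfrak{h}$ from Lemma \ref{approxlemma}; the geometric iteration producing the sequence of affine functions $\mathfrak{l}_k$ with the decay $\sup_{\mathrm{B}_{\rho^k}(x_0)}|u-\mathfrak{l}_k|\le\rho^{k\left(1+\frac{1}{1+\theta}\right)}$; and finally the discrete Campanato/Dini characterization of pointwise $\mathrm{C}^{1,\frac{1}{1+\theta}}$-regularity, together with a covering argument to pass from $x_0$ arbitrary in $\mathrm{B}_{1/2}$ to the interior seminorm bound. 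The undoing of the normalization then reintroduces the factors $\|u\|_{L^\infty(\mathrm{B}_1)}$, $\|\varrho\|_{L^\infty(\mathrm{B}_1)}^{\frac{1}{1+\theta-\sigma}}$, and $\|f\|_{L^\infty(\mathrm{B}_1)}^{\frac{1}{\theta+1}}$ with their correct powers, which are dictated precisely by the invariance of the equation under the scaling $u\mapsto \kappa^{-1}u(\kappa^{\frac{1}{1+\theta}}\cdot)$ used to make the right-hand side and the sublinear coefficient small; this yields the stated estimate with a universal constant $\mathrm{C}$.

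I expect the main obstacle to be verifying rigorously that the geometric iteration closes at the endpoint exponent — that is, confirming that no logarithmic loss appears and that one genuinely lands on $\mathrm{C}^{1,\frac{1}{1+\theta}}$ rather than on $\bigcap_{\alpha<\frac{1}{1+\theta}}\mathrm{C}^{1,\alpha}$. The delicate point is that the approximation error in Lemma \ref{approxlemma} is $o(1)$ but not quantitatively a power of the smallness parameter, so one must choose the radius $\rho$ and the smallness thresholds in the right order: first fix $\beta\in\left(\frac{1}{1+\theta},\alpha^{(p)}_{\mathrm{Hom}}\right)$ and $\rho$ small so that $\mathrm{C}^\star\rho^{\beta-\frac{1}{1+\theta}}\le\frac12$ (using $\beta>\frac{1}{1+\theta}$ crucially here), and only then choose the approximation tolerance $\varepsilon=\varepsilon(\rho)$, which in turn fixes $\delta_0$ via Lemma \ref{approxlemma}. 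This ordering is what makes the borderline case work, and it is the only genuinely new ingredient relative to Theorem \ref{Thm1.1}; everything else is a specialization of the already-established machinery.
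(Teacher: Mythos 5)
Your proposal is correct and follows the paper's route: since $\alpha^{(p)}_{\mathrm{Hom}}>\frac{1}{1+\theta}$ forces the admissible set $\left(0,\alpha^{(p)}_{\mathrm{Hom}}\right)\cap\left(0,\frac{1}{1+\theta}\right]$ of Theorem \ref{Thm1.1} to contain the endpoint $\alpha=\frac{1}{1+\theta}$, the corollary is an immediate specialization. The endpoint verification you worry about (the ordering $\rho$, then $\varepsilon$, then $\delta_0$, and the fact that $\|f_k\|_{L^\infty}\le\rho^{k(1-\alpha(1+\theta))}\|f\|_{L^\infty}$ stays bounded exactly when $\alpha\le\frac{1}{1+\theta}$) is already built into Proposition \ref{Prop4.1}, so no new argument is needed.
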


Moreover, we may present a sharp estimate for viscosity solutions in the plane (cf. \cite{ATU17} and \cite{LL}).

\begin{corollary}[{\bf Optimal estimates in $2$-$\mathrm{D}$}]\label{Corol02}
Let \( \mathrm{B}_1 \subset \mathbb{R}^2 \), and let \( u \in \mathrm{C}^0(\mathrm{B}_1) \) be a bounded viscosity solution of \eqref{Problem}. Suppose further that $\theta > \left(\frac{1}{p-1} + \tau_0\right)^{-1} - 1$ for $p > 2$, and a constant \( \tau_0 < \frac{p-2}{p-1} \) (see Proposition \ref{PropB-K}). Then, \( u \in \mathrm{C}_{\text{loc}}^{1, \frac{1}{\theta+1}}(\mathrm{B}_{1/2}) \) and
\[
[u]_{\mathrm{C}^{1, \frac{1}{\theta+1}}(\mathrm{B}_{1/2})} \leq \mathrm{C}_p \left( \|u\|_{L^{\infty}(\mathrm{B}_1)} + \|\varrho\|_{L^{\infty}(\mathrm{B}_1)}^{\frac{1}{1+\theta-\sigma}} + \|f\|_{L^\infty(\mathrm{B}_1)}^{\frac{1}{1+\theta}} \right).
\]
\end{corollary}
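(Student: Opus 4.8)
\textbf{Proof proposal for Corollary \ref{Corol02}.}

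The plan is to reduce the statement to an application of Theorem \ref{Thm1.1} after establishing the appropriate regularity exponent for planar $p$-harmonic functions. Specifically, Theorem \ref{Thm1.1} gives $\mathrm{C}^{1,\alpha}_{\text{loc}}$ regularity for every $\alpha \in \left(0, \alpha^{(p)}_{\mathrm{Hom}}\right) \cap \left(0, \tfrac{1}{1+\theta}\right]$, so the only thing to verify in order to conclude $u \in \mathrm{C}^{1,\frac{1}{1+\theta}}_{\text{loc}}$ is that $\tfrac{1}{1+\theta} < \alpha^{(p)}_{\mathrm{Hom}}$; equivalently, that planar $p$-harmonic functions are $\mathrm{C}^{1,\beta}$ for some $\beta > \tfrac{1}{1+\theta}$. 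In the plane this is exactly the content of the sharp gradient regularity theory for the $p$-Laplacian (the results of Iwaniec–Manfredi and Bojarski–Iwaniec, invoked here through Proposition \ref{PropB-K}): for $p > 2$ one has a regularity exponent of the form $\tfrac{1}{p-1} + \tau_0$ with $\tau_0 < \tfrac{p-2}{p-1}$, so that $\alpha^{(p)}_{\mathrm{Hom}} \geq \tfrac{1}{p-1} + \tau_0$ (and for $1 < p \leq 2$ one even has $\mathrm{C}^{1,\beta}$ for $\beta$ close to $1$, a case already covered by Corollary \ref{Corol01}).

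The key algebraic step is then to show that the hypothesis
\[
\theta > \left(\frac{1}{p-1} + \tau_0\right)^{-1} - 1
\]
is precisely equivalent to $\tfrac{1}{1+\theta} < \tfrac{1}{p-1} + \tau_0$. Indeed, rearranging, $\theta + 1 > \left(\tfrac{1}{p-1} + \tau_0\right)^{-1}$ iff $\tfrac{1}{\theta+1} < \tfrac{1}{p-1} + \tau_0$, using that both sides are positive. Combining with $\tfrac{1}{p-1} + \tau_0 \leq \alpha^{(p)}_{\mathrm{Hom}}$ from Proposition \ref{PropB-K}, we obtain $\tfrac{1}{1+\theta} \in \left(0, \alpha^{(p)}_{\mathrm{Hom}}\right) \cap \left(0, \tfrac{1}{1+\theta}\right]$, so Theorem \ref{Thm1.1} applies with the specific choice $\alpha = \tfrac{1}{1+\theta}$, yielding both the membership $u \in \mathrm{C}^{1,\frac{1}{1+\theta}}_{\text{loc}}(\mathrm{B}_{1/2})$ and the displayed estimate with a constant $\mathrm{C}_p$ now depending on $p$ (through $\tau_0$) in addition to the other parameters. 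Sharpness of the exponent $\tfrac{1}{1+\theta}$ follows from the explicit example $v(x) = \mathrm{c}\,(1 - |x|^{\beta})$ with $\beta = \tfrac{2+\theta}{1+\theta} = 1 + \tfrac{1}{1+\theta}$ exhibited earlier in the paper, which belongs to $\mathrm{C}^{1,\frac{1}{1+\theta}}$ but no better at the origin.

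I expect the main (and essentially only) obstacle to be the correct invocation of Proposition \ref{PropB-K}: one must make sure that the planar $p$-harmonic regularity exponent quoted there is genuinely a lower bound for $\alpha^{(p)}_{\mathrm{Hom}}$ as defined in \eqref{Def_alpha_Hom}, including the accompanying universal $\mathrm{C}^{1,\beta}$ estimate \eqref{EqHomogeneous} needed to run the approximation scheme of Theorem \ref{Thm1.1}; the restriction $\tau_0 < \tfrac{p-2}{p-1}$ is what guarantees this exponent is a valid (not necessarily optimal) choice. Everything else is a one-line monotonicity manipulation of the inequality defining the threshold on $\theta$, together with citing Theorem \ref{Thm1.1} and, for the complementary range $p \in (1,2]$, Corollary \ref{Corol01}. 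No scaling or covering argument is needed beyond what is already packaged inside Theorem \ref{Thm1.1}.
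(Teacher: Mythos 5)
Your proposal is correct and is essentially the argument the paper intends (the corollary is stated as an immediate consequence of Theorem \ref{Thm1.1} and Proposition \ref{PropB-K}, with no separate proof given): Proposition \ref{PropB-K} yields $\alpha^{(p)}_{\mathrm{Hom}} \geq \frac{1}{p-1}+\tau_0$ in the plane via the universal estimate of the form \eqref{EqHomogeneous}, and the hypothesis on $\theta$ is exactly the rearrangement $\frac{1}{1+\theta} < \frac{1}{p-1}+\tau_0$, so $\alpha = \frac{1}{1+\theta}$ is an admissible choice in Theorem \ref{Thm1.1}. Your added remarks on sharpness and the range $p\in(1,2]$ are not needed for the statement but do not affect the correctness.
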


It is worth emphasizing that our results remain notable even in the case $p = 2$. In this setting, we consider the prototypical model
\begin{equation}\label{LinearModel}
\mathscr{L}[u] \coloneqq |\nabla u(x)|^{\theta} \left( \Delta u(x) + \langle \mathfrak{B}(x), \nabla u \rangle \right) + \varrho(x) |\nabla u(x)|^{\sigma} = f(x) \quad \text{in} \quad \mathrm{B}_1.
\end{equation}
In this framework, due to Lemma \ref{CT-Lemma} and Theorem \ref{Thm-Equiv-Sol}, the associated homogeneous problem with vanishing Hamiltonian—also known as the geometric tangential equation—reduces to the Laplace equation, i.e., $-\Delta \mathfrak{h} = 0$. Consequently, the function $\mathfrak{h}$ admits local $\mathrm{C}^{1,1}$ regularity estimates. Therefore, by Corollary \ref{Corol01}, solutions to \eqref{LinearModel} belong to the class $\mathrm{C}_{\text{loc}}^{1, \frac{1}{\theta+1}}(\mathrm{B}_1)$. 

Additionally, our results are novel even in the case of quasilinear models without the degeneracy law $\xi \mapsto \Phi(|\xi|) = |\xi|^{\theta} \equiv 1$, as well as for equations involving linear or sublinear Hamiltonian terms. Specifically, when we consider the model equation
\[
\mathcal{L}_p[u] \coloneqq  \Delta_p^{\mathrm{N}} u(x) + \langle \mathfrak{B}(x), \nabla u \rangle  + \varrho(x) |\nabla u(x)|^{\sigma} = f(x) \quad \text{in} \quad \mathrm{B}_1,
\]
Theorem~\ref{Thm1.1} ensures that viscosity solutions exhibit regularity properties closely aligned with those of $p$-harmonic functions, namely, $u \in \mathrm{C}_{\text{loc}}^{1, \alpha}$ for every $\alpha \in (0, \alpha^{(p)}_{\mathrm{Hom}})$. This represents a significant contribution to the theoretical framework surrounding quasilinear equations in non-divergence form (cf. \cite{APR17} for the corresponding results on the normalized $p$-Poisson problem).

These regularity estimates can be viewed as the Hamiltonian/degenerate counterparts of the results established by Araújo \textit{et al.} in \cite{ART15}, Imbert-Silvestre in \cite{IS13}, and da Silva–Nornberg in \cite{daSNorb21}, within the fully nonlinear context. Furthermore, our findings are closely related to the works of Birindelli and Demengel—specifically \cite{BirDem14}, which addresses the linear regime, and \cite{BirDem16}, which treats the sublinear case. Finally, it is also worthwhile to highlight the connection with the work of Attouchi and Ruosteenoja \cite{Attou18}, which establishes gradient estimates for degenerate quasilinear elliptic models in non-divergence form, thereby underscoring the significant advances introduced in our study.

We now derive a sharp estimate at the critical points of existing solutions.

\begin{theorem}[{\bf Sharp regularity estimates}]\label{Opt_reg-Extremum-point}
Suppose the assumptions of Theorem \ref{Thm1.1} are satisfied. Assume that \( x_0 \in \mathrm{B}_1 \) is a local extremum point of \( u \). Then,
\begin{equation}\label{Higher Reg}
\sup_{x \in \mathrm{B}_r(x_0)} |u(x)-u(x_0)| \leq \mathrm{C} r^{1 + \frac{1}{1+\theta}},
\end{equation}
for \( r \in (0, 1/2) \), where \( \mathrm{C} > 0 \) is a universal constant.
\end{theorem}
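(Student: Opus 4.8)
The plan is to derive \eqref{Higher Reg} from the gradient estimate of Theorem~\ref{Thm1.1} combined with the equation satisfied by $u$ near the critical point $x_0$. Since $x_0$ is a local extremum, we have $\nabla u(x_0)=0$, and by Theorem~\ref{Thm1.1} the solution is $\mathrm{C}^{1,\alpha}_{\mathrm{loc}}$ with $\alpha = \tfrac{1}{1+\theta}$ (using Corollary~\ref{Corol01} when $\alpha^{(p)}_{\mathrm{Hom}}$ is large, or more precisely the exponent attained in the iteration), so that $|\nabla u(x)| \le \mathrm{C}|x-x_0|^{\alpha} = \mathrm{C}|x-x_0|^{\frac{1}{1+\theta}}$ for $x$ near $x_0$. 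Integrating this gradient bound along segments from $x_0$ already yields $|u(x)-u(x_0)| \lesssim |x-x_0|^{1+\frac{1}{1+\theta}}$, which is exactly the claimed scaling. The subtlety is that this ``soft'' argument only gives the exponent $1+\alpha$ for whatever $\alpha$ the $\mathrm{C}^{1,\alpha}$ theory produces, and to get the \emph{sharp} exponent $1+\tfrac{1}{1+\theta}$ unconditionally one should instead run a dedicated iteration tailored to the extremum point.

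Accordingly, I would prove the estimate by a direct geometric (dyadic) iteration in the spirit of Step~3 above, but now anchored at $x_0$ and exploiting $\nabla u(x_0)=0$. After the usual normalization and scaling (Step~4), assume $\|u\|_{L^\infty(\mathrm{B}_1)}\le 1$, $u(x_0)=0$, and the smallness of $f$, $\mathfrak B$, $\varrho$. The claim to prove inductively is: there exists a universal $\rho\in(0,1)$ such that
\[
\sup_{\mathrm{B}_{\rho^k}(x_0)} |u| \le \rho^{k\left(1+\frac{1}{1+\theta}\right)} \qquad \text{for all } k\in\mathbb N.
\]
The base case $k=0$ is the normalization. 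For the induction step, consider $v(x) := \rho^{-k(1+\frac{1}{1+\theta})} u(x_0+\rho^k x)$ on $\mathrm{B}_1$; it is bounded by $1$ and, by the scaling relations for $|\nabla u|^\theta\Delta_p^{\mathrm N}u + \mathscr H$ under this particular parabolic-type rescaling $(r, r^{1+\frac{1}{1+\theta}})$, it solves an equation of the same structural type with source and Hamiltonian coefficients that are even smaller (here one checks the homogeneities: $\theta$-degenerate diffusion scales like $r^{\frac{\theta}{1+\theta}}$-times the natural second-order scaling, the source picks up a favorable power since $\alpha=\frac1{1+\theta}$, and the condition $\sigma\in(\theta,1+\theta)$ is exactly what makes the $\varrho|\nabla u|^\sigma$ term subcritical under this rescaling — this is where $(\mathbf{H3})$ is used). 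Applying the approximation result (Lemma~\ref{approxlemma} / Step~2) to $v$ gives a $p$-harmonic (or harmonic) $\mathfrak h$ with $\sup_{\mathrm{B}_{1/2}}|v-\mathfrak h|$ as small as we wish; since $\mathfrak h$ is $\mathrm{C}^{1,\alpha^\star}$ and — crucially — we may subtract its value and gradient at the origin, using that $v$ is nearly critical there, $\mathfrak h$ is controlled at scale $\rho$ by $\mathrm C^\star\rho^{1+\alpha^\star}$; choosing $\rho$ small so that $\mathrm C^\star\rho^{1+\alpha^\star} + (\text{approximation error}) \le \rho^{1+\frac{1}{1+\theta}}$ (possible because $\alpha^\star$ can be taken $\ge \frac1{1+\theta}$, or one absorbs into the error term otherwise) closes the induction. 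Finally, for general $r\in(0,1/2)$ pick $k$ with $\rho^{k+1}\le r\le\rho^k$ and conclude $\sup_{\mathrm{B}_r(x_0)}|u-u(x_0)|\le \rho^{-(1+\frac1{1+\theta})} r^{1+\frac1{1+\theta}}$, then undo the normalization to recover the stated constant dependence.

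The main obstacle is the bookkeeping of the scaling: one must verify that the rescaling $v(x)=\rho^{-k(1+\frac1{1+\theta})}u(x_0+\rho^k x)$ maps solutions of \eqref{Problem} to (sub/super)solutions of an equation with the \emph{same} constants $\lambda_p^{\mathrm N},\Lambda_p^{\mathrm N}$ and with strictly smaller (or at least not larger) data norms, so that the smallness hypotheses of the approximation lemma are preserved along the entire iteration. The sublinear Hamiltonian term $\varrho|\nabla u|^\sigma$ is the delicate one: under this rescaling it is multiplied by $\rho^{k(\sigma-\theta)/(1+\theta)\cdot(\text{something})}$-type factors, and one needs $\sigma<1+\theta$ for the power to be nonnegative and $\sigma>\theta$ for the term not to interfere with the $\theta$-degeneracy normalization; I would isolate this computation as the heart of the argument. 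A secondary point worth stating carefully is why we are allowed to recenter $\mathfrak h$ by its $1$-jet at the origin — this uses that $v$ is $\varepsilon$-close to $\mathfrak h$ and that $v(0)$ and $\nabla v(0)$ are both small (the latter because $\nabla u(x_0)=0$ and the rescaling weakens the gradient), so that $|\mathfrak h(0)|$ and $|\nabla \mathfrak h(0)|$ are controlled, and then $\mathrm{C}^{1,\alpha^\star}$ regularity of $p$-harmonic functions does the rest. Everything else — the covering argument, the normalization undoing the constant — is routine.
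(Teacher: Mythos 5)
Your overall strategy---a direct dyadic improvement-of-flatness iteration anchored at $x_0$, with no affine correction because $\nabla u(x_0)=0$---is a legitimate alternative to the paper's argument, which instead proves the discrete decay $\mathfrak{s}_{j+1}\leq\max\{\mathrm{C}\,2^{-\hat\beta(j+1)},\,2^{-\hat\beta}\mathfrak{s}_j\}$ by contradiction: a blow-up sequence $v_k=v(2^{-j_k}\cdot)/\mathfrak{s}_{j_k+1}$ is shown, via the compactness of Lemma \ref{Holderest} and stability, to converge to a nonnegative solution of $|\nabla v_\infty|^\theta\Delta_p^{\mathrm N}v_\infty=0$ vanishing at an interior point, which V\'{a}zquez's strong maximum principle forces to be identically zero, contradicting $\sup_{\overline{\mathrm{B}_{1/2}}}v_\infty=1$. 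Your scaling bookkeeping, once actually carried out, is fine ($\varrho_k$ and $\mathfrak{B}_k$ do shrink, the latter like $\rho^{k\sigma/(1+\theta)}$ and $\rho^{k}$ respectively); note however that $f_k(x)=f(\rho^kx)$ is exactly scale-invariant at the exponent $\hat\beta=\tfrac{2+\theta}{1+\theta}$, not ``favorably'' small---this is precisely why the exponent is critical, and it is harmless only because the approximation lemma needs smallness, not decay. Your preliminary ``soft'' argument correctly self-diagnoses its limitation: Theorem \ref{Thm1.1} only yields $\alpha=\tfrac1{1+\theta}$ when $\alpha_{\mathrm{Hom}}^{(p)}>\tfrac1{1+\theta}$, so it cannot give the unconditional statement.

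The genuine gap is in how you close the induction. You bound the approximating profile by its $\mathrm{C}^{1,\alpha^\star}$ expansion, $\sup_{\mathrm{B}_\rho}|\mathfrak h-\mathfrak h(0)-\langle\nabla\mathfrak h(0),x\rangle|\leq\mathrm C^\star\rho^{1+\alpha^\star}$, and then require $\mathrm C^\star\rho^{1+\alpha^\star}+\varepsilon\leq\rho^{1+\frac1{1+\theta}}$. This is impossible for small $\rho$ whenever $\alpha^\star=\alpha_{\mathrm{Hom}}^{(p)}\leq\tfrac1{1+\theta}$, a case the theorem must cover since nothing in the hypotheses excludes it, and your parenthetical escape (``$\alpha^\star$ can be taken $\geq\tfrac1{1+\theta}$, or one absorbs into the error term otherwise'') is not a proof: a term of size $\rho^{1+\alpha^\star}$ cannot be absorbed into $\rho^{1+\frac1{1+\theta}}$ when $\alpha^\star<\tfrac1{1+\theta}$. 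The missing idea is that the extremum must be exploited more strongly than through the $1$-jet of $\mathfrak h$. Since $v\geq0$, $v(0)=0$ and $\sup_{\mathrm{B}_{1/2}}|v-\mathfrak h|\leq\varepsilon$, the function $\mathfrak h+\varepsilon$ is a nonnegative $p$-harmonic function on $\mathrm{B}_{1/2}$ whose infimum is at most $2\varepsilon$, so Harnack's inequality gives $\sup_{\mathrm{B}_{1/4}}|\mathfrak h|\leq\mathrm C\varepsilon$: the whole profile is uniformly small, not merely flat to order $1+\alpha^\star$. This is exactly the role played in the paper's proof by V\'{a}zquez's strong maximum principle applied to the blow-up limit, and it is what makes the superlinear decay $\rho^{1+\frac1{1+\theta}}$ reachable for every $\theta>0$ and every $p$. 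With that correction your induction closes by taking $\varepsilon\lesssim\rho^{1+\frac1{1+\theta}}$; as written, the step fails.
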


\medskip

To clarify, sharp regularity estimates for solutions to certain elliptic models are fundamental tools in the qualitative analysis of partial differential equations. Indeed, such estimates play a decisive role in various aspects, including:
\begin{itemize}
    \item[(i)] Controlling the growth rate of solutions away from their free boundaries;
    \item[(ii)] Determining the precise vanishing rate near singular sets of solutions;
    \item[(iii)] Establishing the convergence of blow-up profiles;
    \item[(iv)] Classifying global solutions (Liouville-type results);
    \item[(v)] Obtaining uniform estimates to prove existence results via topological methods. 
\end{itemize}

\bigskip

From a geometric viewpoint, it is a piece of pivotal qualitative information to obtain the (counterpart) sharp lower bound estimate as the one addressed in Theorem \ref{Opt_reg-Extremum-point}.

Therefore, under a natural, non-degeneracy assumption on source term, we further obtain the precise behavior of solutions at certain interior critical sets.

\begin{theorem}[{\bf Non-degeneracy Estimate}]\label{Thm-Non-Deg}
Let \( u \in \mathrm{C}^0(\mathrm{B}_1) \) be a bounded viscosity solution to \eqref{Problem}. Assume that \( \displaystyle \inf_{\mathrm{B}_{1}} f \geq \mathfrak{c}_{0} \) for some constant \( \mathfrak{c}_{0} > 0 \). Then, for any local extremum point \( x_0 \in \mathrm{B}_{1} \) and for every \( r > 0 \) such that \( \mathrm{B}_r(x_0) \subset \mathrm{B}_{1} \), we have
\[
\sup_{\partial \mathrm{B}_r(x_{0})}(u - u(x_{0})) \geq \mathrm{C}(n, p, \theta, \sigma, \|\mathfrak{B}\|_{L^{\infty}(\mathrm{B}_{1};\mathbb{R}^{n})}, \|\varrho\|_{L^{\infty}(\mathrm{B}_{1})}, \mathfrak{c}_{0}) \cdot r^{1+\frac{1}{1+\theta}}.
\]
\end{theorem}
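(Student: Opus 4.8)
\textbf{Proof proposal for Theorem \ref{Thm-Non-Deg}.} The plan is to construct an explicit radial comparison function that the solution must dominate near the extremum point, exploiting the strict positivity of $f$. Without loss of generality (replacing $u$ by $-u$ and adjusting the roles of $\mathfrak{B}$ and $\varrho$, which only flips signs in the Hamiltonian), we may assume $x_0$ is a local minimum, so that $u - u(x_0) \geq 0$ in a neighborhood of $x_0$. Set $v(x) \defeq u(x_0) + \mathrm{c}\,|x - x_0|^{\beta}$ with $\beta = 1 + \frac{1}{1+\theta} = \frac{2+\theta}{1+\theta}$ and $\mathrm{c} > 0$ a small constant to be chosen. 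A direct computation (as in the non-uniqueness Example, but now tracking the sign and size of the residual) shows that for $x \neq x_0$,
\[
|\nabla v|^{\theta}\bigl(\Delta_p^{\mathrm{N}} v + \langle \mathfrak{B}, \nabla v\rangle\bigr) + \varrho |\nabla v|^{\sigma} = \mathrm{c}^{1+\theta}\,\omega(\beta)\,|x-x_0|^{\beta(1+\theta) - (2+\theta)} + \text{lower order},
\]
where $\omega(\beta) = \beta^{1+\theta}\bigl[(\beta-1)(p-1) + (n-1)\bigr] > 0$ is the leading coefficient coming from the normalized $p$-Laplacian of a radial power, and the exponent $\beta(1+\theta) - (2+\theta) = 0$ by the choice of $\beta$; the Hamiltonian terms contribute powers $|x-x_0|^{\beta - 1 + \beta\theta - (2+\theta)+1} = |x-x_0|^0$ scaled by $\|\mathfrak{B}\|_{L^\infty}\mathrm{c}^{1+\theta}$ (bounded) and $|x-x_0|^{(\beta-1)\sigma}$ scaled by $\|\varrho\|_{L^\infty}\mathrm{c}^{\sigma}$ (which vanishes as $x\to x_0$ since $(\beta-1)\sigma > 0$). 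Hence, choosing $\mathrm{c}$ small depending only on $n, p, \theta, \sigma, \|\mathfrak{B}\|_{L^\infty}, \|\varrho\|_{L^\infty}$ and $\mathfrak{c}_0$, we can guarantee that on a small ball $\mathrm{B}_{r_0}(x_0)$ the right-hand side of the equation for $v$ stays strictly below $\mathfrak{c}_0 \leq \inf f$, so that $v$ is a viscosity subsolution of \eqref{Problem} in $\mathrm{B}_{r_0}(x_0)\setminus\{x_0\}$ (away from its own critical point the computation is classical, and $v$ is smooth there).

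Next I would run a comparison argument. Suppose, for contradiction, that for some $r \in (0, r_0)$ with $\mathrm{B}_r(x_0)\subset \mathrm{B}_1$ one has $\sup_{\partial \mathrm{B}_r(x_0)}(u - u(x_0)) < \mathrm{c}\, r^{\beta}$. Then on $\partial \mathrm{B}_r(x_0)$ we have $u < v$, while at $x_0$ we have $u(x_0) = v(x_0)$; moreover $v$ is a subsolution of \eqref{Problem} in the punctured ball and $u$ a supersolution there. The presence of the possible singularity of $v$ at $x_0$ is harmless because $v$ is the \emph{minimum} of the comparison pair and $u - v$ attains an interior minimum that, if it occurs at $x_0$, contradicts $u(x_0)=v(x_0)$ together with $u \geq u(x_0)$ only in a trivial way — to handle this cleanly one compares $u$ with $v_{\varepsilon}(x) \defeq u(x_0) - \varepsilon + \mathrm{c}|x-x_0|^{\beta}$, for which $v_\varepsilon(x_0) < u(x_0) \le u$ near $x_0$, apply the Comparison Principle (Lemma \ref{Comp-Princ}) on the annular region $\{r' < |x - x_0| < r\}$ for small $r'$ where the ordering $u \geq v_\varepsilon$ holds on the inner sphere, deduce $u \geq v_\varepsilon$ on the whole annulus, and let $r' \to 0$ and then $\varepsilon \to 0$. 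This yields $u(x) \geq u(x_0) + \mathrm{c}|x-x_0|^{\beta}$ throughout $\mathrm{B}_r(x_0)$, in particular $\sup_{\partial \mathrm{B}_r(x_0)}(u - u(x_0)) \geq \mathrm{c}\,r^{\beta}$, the desired estimate. For $r$ not small (i.e., $r_0 \le r$ with $\mathrm{B}_r(x_0)\subset\mathrm{B}_1$), the estimate on $\partial\mathrm{B}_r$ follows from the estimate on $\partial\mathrm{B}_{r_0}$ by a standard chaining/scaling of the constant, or directly since $\sup_{\partial\mathrm{B}_r} \geq \sup_{\partial\mathrm{B}_{r_0}}$ when $u-u(x_0)$ has its minimum at $x_0$ (by the maximum principle on annuli the relevant oscillation is monotone in $r$, up to adjusting $\mathrm{C}$).

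The main obstacle I anticipate is twofold. First, the bookkeeping in the subsolution computation must be done carefully near $x_0$: although the leading term matches the degree $0$ in $|x-x_0|$ thanks to the choice of $\beta$, one must verify that the Hamiltonian contribution $\langle \mathfrak{B}, \nabla v\rangle |\nabla v|^\theta$ does not overwhelm the positive leading term — this forces the smallness of $\mathrm{c}$ to depend on $\|\mathfrak{B}\|_{L^\infty}$, and one needs $\omega(\beta) > 0$, i.e., $(\beta-1)(p-1) + n - 1 > 0$, which holds for all $p > 1$ and $n \geq 1$ since $\beta > 1$. The genuinely sublinear term with $\|\varrho\|_{L^\infty}$ is actually helpful as it decays. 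Second, and more delicate, is the application of the Comparison Principle across the singular point $x_0$ of $v$: the clean way is the annular exhaustion described above, which sidesteps any regularity issue of $v$ at $x_0$ because $v$ (or $v_\varepsilon$) is smooth on every annulus $\{r' < |x-x_0| < r\}$ and one only needs the ordering on the two boundary spheres; letting $r'\to 0$ recovers the pointwise bound at interior points arbitrarily close to $x_0$, and continuity of $u$ finishes the job. A final minor point is making the statement uniform in $r$ over the full admissible range $\mathrm{B}_r(x_0)\subset\mathrm{B}_1$ rather than only for small $r$; this is absorbed into the universal constant by the monotonicity of $r \mapsto \sup_{\partial\mathrm{B}_r(x_0)}(u-u(x_0))$ guaranteed by the maximum principle (since $u - u(x_0)$ is a nonnegative supersolution vanishing at the center).
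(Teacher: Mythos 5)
Your barrier is the right one — the paper uses exactly $\psi(x)=\kappa|x-x_0|^{\beta}$ with $\beta=\tfrac{2+\theta}{1+\theta}$ and the same smallness of $\kappa$ in terms of $n,p,\theta,\sigma,\|\mathfrak{B}\|_{L^\infty},\|\varrho\|_{L^\infty},\mathfrak{c}_0$ — but your comparison step runs in the wrong direction and cannot be repaired as written. Since your computation gives $|\nabla v|^{\theta}(\Delta_p^{\mathrm N}v+\langle\mathfrak{B},\nabla v\rangle)+\varrho|\nabla v|^{\sigma}<\mathfrak{c}_0\le f$, the barrier is a strict \emph{supersolution} of \eqref{Problem} (in the convention of Definition \ref{DefViscSol}), not a subsolution, while $u$ is a subsolution; Lemma \ref{Comp-Princ} can therefore only push the \emph{solution below the barrier} when the ordering holds on the boundary — it can never deliver $u\ge v_\varepsilon$. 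Concretely, your annular comparison fails because the required ordering $u\ge v_\varepsilon$ is unavailable on the outer sphere $\partial\mathrm B_r$: the contradiction hypothesis gives precisely $u<v$ there, so the hypothesis of the comparison principle on the annulus is not met. Worse, the intermediate statement you aim for, $u(x)\ge u(x_0)+\mathrm c|x-x_0|^{\beta}$ throughout $\mathrm B_r(x_0)$, is false in general: the one-dimensional profile $u(x)=\mathfrak c_{p,0,\theta}|x_1|^{(2+\theta)/(1+\theta)}$ (the Example following Theorem \ref{Higher_continuity} with $\alpha=0$) solves an equation of type \eqref{Problem} with constant positive right-hand side, has a local minimum at the origin, yet vanishes on the entire hyperplane $\{x_1=0\}$. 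Non-degeneracy is intrinsically a statement about $\sup_{\partial\mathrm B_r}$, not a pointwise lower bound.

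The paper's argument turns the logic around: set $v_\varepsilon=u-u(x_0)+\varepsilon$, so $v_\varepsilon(x_0)=\varepsilon>0$, and suppose for contradiction that $v_\varepsilon<\psi$ on all of $\partial\mathrm B_r(x_0)$. Then the comparison principle on the \emph{full ball} (subsolution $v_\varepsilon$ with right-hand side $f\ge\mathfrak{c}_0$, strict supersolution $\psi$ with right-hand side $<\mathfrak{c}_0$) forces $v_\varepsilon\le\psi$ in $\mathrm B_r(x_0)$, and evaluating at the center, where $\psi$ vanishes, yields $0\ge\varepsilon>0$, a contradiction; hence some $y_r\in\partial\mathrm B_r(x_0)$ satisfies $v_\varepsilon(y_r)\ge\psi(y_r)=\kappa r^{\beta}$, and one lets $\varepsilon\to0^+$. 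Note also that your worry about a singularity of the barrier at $x_0$ is moot ($\psi\in\mathrm C^1$ since $\beta>1$, with $\nabla\psi(x_0)=0$), and the full-ball argument applies to every $r$ with $\mathrm B_r(x_0)\subset\mathrm B_1$ at once, so no chaining or monotonicity discussion is needed.
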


We next establish a Strong Maximum Principle and a Hopf-type lemma for an elliptic model involving linear first-order terms.

\begin{theorem}[{\bf Strong Maximum Principle}]\label{thm:SMP}
Let \( \Omega \subset \mathbb{R}^n \) be a bounded domain, and let \( \mathfrak{B} \) and \( \mathfrak{c} \leq 0 \) be continuous functions on \( \overline{\Omega} \). Suppose that \( v \in \mathrm{C}_{\mathrm{loc}}^{0,1}(\Omega) \) is a non-negative viscosity supersolution of
\begin{equation}
|\nabla v|^{\theta}\Delta^{\mathrm{N}}_p v + \mathscr{H}(\nabla v, x) + \mathfrak{c}(x)v^{1+\theta}(x) = 0, \quad x \in \Omega,
\end{equation}
where \( \sigma \in (\theta, \infty) \). In the sublinear case \( \theta < \sigma < 1+\theta \), assume in addition that \( \varrho \geq 0 \). Then either \( v \equiv 0 \) or \( v > 0 \) in \( \Omega \). Furthermore, if \( \Omega \) satisfies the interior sphere condition and \( v(x) > v(z) = 0 \) for all \( x \in \Omega \) and some \( z \in \partial \Omega \), then there exists a constant \( \nu > 0 \) such that
\begin{equation}
v(x) \geq \nu(r - |x - x_0|), \quad \text{for } x \in B_r(x_0),
\tag{2.7}
\end{equation}
where \( B_r(x_0) \subset \Omega \) is a ball tangent to \( z \).
\end{theorem}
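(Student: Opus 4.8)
\textbf{Proof proposal for Theorem \ref{thm:SMP}.}

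The plan is to adapt the classical comparison-with-barrier strategy for the Strong Maximum Principle and Hopf lemma to the present degenerate, non-divergence quasilinear setting, exploiting the uniform ellipticity of $\Delta_p^{\mathrm{N}}$ via the Pucci bounds and the precise homogeneity $\theta$ of all terms. First I would establish the Hopf-type inequality \eqref{2.7}, since the dichotomy "$v\equiv 0$ or $v>0$" follows from it by a standard connectedness argument: if $v$ vanishes at an interior point $x^\ast$ but is not identically zero, one finds a ball $B\subset\{v>0\}$ with $x^\ast\in\partial B$ closest to the zero set, and applying the Hopf estimate at $x^\ast$ forces $v$ to have a strictly negative inward normal derivative direction there, contradicting that $x^\ast$ is an interior minimum of the nonnegative function $v$ (where any viscosity test from below has vanishing gradient). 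So the heart of the matter is the barrier construction.

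For the Hopf estimate, fix the tangent ball $B_r(x_0)\subset\Omega$ with $z\in\partial B_r(x_0)$, and work in the annulus $A=B_r(x_0)\setminus \overline{B_{r/2}(x_0)}$. I would use the standard radial barrier $\psi(x) = \mathrm{c}\big(e^{-\gamma|x-x_0|^2} - e^{-\gamma r^2}\big)$ for large $\gamma>0$ and small $\mathrm{c}>0$ to be chosen. On $A$ one has $|\nabla\psi|>0$, so $\psi$ is smooth with non-vanishing gradient and the fully nonlinear operator acts classically; a direct computation gives $\Delta_p^{\mathrm{N}}\psi \geq (\text{positive multiple of }\gamma) - C$ on $A$ once $\gamma$ is large (the $e^{-\gamma|x-x_0|^2}$ factors are bounded above and below on $A$). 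Multiplying by $|\nabla\psi|^\theta$ and absorbing the lower-order terms $\langle\mathfrak{B},\nabla\psi\rangle|\nabla\psi|^\theta$, $\varrho|\nabla\psi|^\sigma$ (note $\sigma>\theta$ but $|\nabla\psi|$ stays bounded, and in the sublinear case the sign $\varrho\ge 0$ keeps this term with the favorable sign when $\psi$ is built to be a subsolution — care with signs here), and the zeroth-order term $\mathfrak{c}\psi^{1+\theta}$ (harmless since $\mathfrak{c}\le 0$ and we can arrange $\psi\ge 0$... actually $\mathfrak{c}\le0$ with $\psi\ge 0$ makes $\mathfrak{c}\psi^{1+\theta}\le 0$, which is the wrong sign for a subsolution, so one instead chooses $\mathrm{c}$ small enough that the dominant $\gamma$-term still wins), one checks $\psi$ is a (classical, hence viscosity) strict subsolution of the equation in $A$. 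On $\partial B_{r/2}(x_0)$ we have $v\geq m_0 := \min_{\partial B_{r/2}(x_0)} v > 0$ (positivity on this compact set coming either from the not-yet-proven global positivity — so instead run the argument inside $\{v>0\}$ directly — or simply by choosing the ball so that $\partial B_{r/2}(x_0)$ lies in $\{v>0\}$), and on $\partial B_r(x_0)$, $\psi=0\le v$; choosing $\mathrm{c}$ so that $\psi\le m_0$ on $\partial B_{r/2}(x_0)$, the Comparison Principle (Lemma \ref{Comp-Princ}) gives $v\geq \psi$ in $A$. Since $\psi(z)=0=v(z)$ and $\psi$ has a nonzero inward gradient at $z$ with $\psi(x)\geq \nu'(r-|x-x_0|)$ near $z$, we conclude $v(x)\geq \nu(r-|x-x_0|)$.

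The main obstacle I anticipate is verifying that the radial barrier is genuinely a viscosity subsolution despite the $|\nabla\psi|^\theta$ weight \emph{and} handling the mismatched homogeneities: the zeroth-order term scales like $\psi^{1+\theta}$ while the absorption/Hamiltonian terms carry weights $|\nabla\psi|^\theta$ and $|\nabla\psi|^\sigma$, so one must keep all estimates on the annulus $A$ where $|\nabla\psi|$ is pinched between two positive constants and the troublesome degeneracy at $\nabla\psi=0$ never occurs; the parameters must be chosen in the right order ($\gamma$ large first to dominate the second-order computation, then $\mathrm{c}$ small to both fit under $m_0$ and preserve the strict sign after multiplying through). A secondary subtlety is the sublinear regime: there $\sigma\in(\theta,1+\theta)$ and the hypothesis $\varrho\geq 0$ is exactly what prevents $\varrho|\nabla\psi|^\sigma$ from destroying the subsolution property, so the proof must be presented as two cases (linear $\sigma=1+\theta$-type behavior versus genuine sublinear) with the sign condition invoked in the latter. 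Finally, the passage from the Hopf estimate to the $v\equiv0$/$v>0$ dichotomy requires the observation that at an interior local minimum $x^\ast$ of the nonnegative supersolution $v$ with $v(x^\ast)=0$, the viscosity supersolution inequality tested with a smooth function touching from below (which has zero gradient there) would read $0\cdot(\cdots)+\mathscr{H}(0,x^\ast)+\mathfrak{c}(x^\ast)\cdot 0\le f$; this is consistent and gives no contradiction by itself — hence the necessity of routing through the Hopf estimate in a one-sided neighborhood rather than a direct pointwise argument.
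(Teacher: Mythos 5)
Your barrier construction coincides with the paper's: the same exponential radial profile on the annulus $\mathrm{B}_r(x_0)\setminus \mathrm{B}_{r/2}(x_0)$, where $|\nabla\psi|$ is pinched away from zero so the operator acts classically, the same two-case sign analysis (the hypothesis $\varrho\ge 0$ absorbing the absorption term in the sublinear regime, the exponent gap $\sigma-1-\theta>0$ handling it in the superlinear one), and the same appeal to the Comparison Principle (Lemma \ref{Comp-Princ}) to conclude $v\ge \mathrm{K}_0\psi$ on the annulus. The paper merely reverses your order, proving the interior dichotomy first and then observing that the Hopf bound follows from $1-e^{-t}\ge \mathrm{C}_s\,t$ on bounded intervals.

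The genuine gap is in how you extract the contradiction for the dichotomy. You assert that at the interior touching point $x^\ast$ (a global minimum of the nonnegative $v$) ``any viscosity test from below has vanishing gradient,'' and that the one-sided linear growth supplied by the barrier therefore contradicts minimality. Both halves fail: a $\mathrm{C}^2$ function touching $v$ from below at an interior minimum need not have zero gradient there (take $v(x)=|x-x^\ast|$ and $\phi(x)=(x-x^\ast)_1-|x-x^\ast|^2$), and a bound $v(x)\ge\nu(r-|x-x_0|)$ on one side of $x^\ast$ is perfectly compatible with $x^\ast$ being a minimum of a merely Lipschitz function --- again $v=|x-x^\ast|$ --- and since $v\in\mathrm{C}^{0,1}_{\mathrm{loc}}$ only, you cannot invoke $\nabla v(x^\ast)=0$. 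The contradiction must come from the equation itself: since $v\ge \mathrm{K}_0\psi$ in the annulus and $v\ge 0\ge \mathrm{K}_0\psi$ outside the tangent ball, the barrier touches $v$ from below at the interior point $x^\ast$ with $\nabla(\mathrm{K}_0\psi)(x^\ast)\neq 0$, so Definition \ref{DefViscSol} forces $|\nabla(\mathrm{K}_0\psi)|^{\theta}\Delta_p^{\mathrm{N}}(\mathrm{K}_0\psi)+\mathscr{H}(\nabla(\mathrm{K}_0\psi),x^\ast)+\mathfrak{c}(x^\ast)(\mathrm{K}_0\psi)^{1+\theta}\le 0$ at $x^\ast$, contradicting the strict subsolution inequality you computed. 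This viscosity test at the touching point is exactly the step the paper performs and the one your write-up omits. A secondary slip: shrinking the amplitude $\mathrm{c}$ does not help against the zeroth-order term $\mathfrak{c}\psi^{1+\theta}$, which has the same homogeneity $1+\theta$ in the amplitude as the principal part; that term is dominated by taking $\gamma$ large, not $\mathrm{c}$ small.
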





\medskip

\subsection*{Applications to H\'{e}non-type models with strong absorption}

A natural question arising in the study of free boundary problems associated with \eqref{Problem} concerns the determination of the optimal regularity of viscosity solutions along certain level sets. For example, by fixing \( \mathrm{R} > r > 0 \) and defining
\begin{equation}\label{beta-exponent}
\hat{\beta} = \hat{\beta}(\theta, m) := \frac{2+\theta}{1+\theta - m},
\end{equation}
one may consider the radial profile \( \Phi_{r,\hat{\beta}} : \mathrm{B}_{\mathrm{R}}(x_0) \to \mathbb{R}^+ \) given by
\begin{equation}\label{Radial-profile}
    \Phi_{r,\hat{\beta}}(x) = \mathrm{c}_{n,\theta,\hat{\beta},p} \left(|x - x_0| - r\right)^{\hat{\beta}}_+,
\end{equation}
where
\[
\mathrm{c}_{n,\theta,\hat{\beta},p} = \mathrm{c}(n,\theta,\hat{\beta},p) = \frac{1}{\hat{\beta}} \cdot \frac{1}{\left[(p - 1)(\hat{\beta} - 1)+1\right]^{\frac{1}{1+\theta}}} > 0
\]
is an appropriate constant, and
\begin{eqnarray*}
\mathfrak{B}(x) =
\begin{cases}
-(n-1)\dfrac{x - x_{0}}{|x - x_{0}|^{2}}, & \text{if } x \in \mathrm{B}_{R}(x_{0}) \setminus \mathrm{B}_{r}(x_{0}),\\[1.5ex]
-(n-1)\dfrac{x - x_{0}}{r^{2}}, & \text{if } x \in \overline{\mathrm{B}_{r}(x_{0})}
\end{cases}
\end{eqnarray*}
and
\[
\varrho(x) = \left(\mathrm{c}_{n,\theta,\hat{\beta},p} \hat{\beta}\right)^{1+\theta - \sigma} \left(|x - x_0| - r\right)_{+}^{(\hat{\beta} - 1)(1+\theta - \sigma) - 1}.
\]
Moreover, we must assume the condition \( \sigma \leq \frac{m(2+\theta)}{m+1} \), which, under assumption \((\textbf{H3})\), implies that \( m > \frac{\theta}{2} \), establishing a new lower bound.

Thus, it is straightforward to verify that \( \Phi_{r,\hat{\beta}} \) is a viscosity solution to the equation
\[
|\nabla \Phi_{r,\hat{\beta}}(x)|^{\theta} \Delta_p^{\mathrm{N}} \Phi_{r,\hat{\beta}}(x) + \mathscr{H}(\nabla \Phi_{r,\hat{\beta}}(x),x) = \left(\Phi_{r,\hat{\beta}}(x)\right)_{+}^{m} \quad \text{in} \quad \mathrm{B}_{\mathrm{R}}(x_0).
\]

On the other hand, from Theorem \ref{Thm1.1}, it is well known that, in general, solutions to \eqref{Problem} belong to \( \mathrm{C}^{1,\alpha}_{\text{loc}} \) for some \( \alpha \in (0,1) \). However, for this particular example, with \( \theta > 0 \) fixed, one observes that \( \Phi_{r,\hat{\beta}} \in \mathrm{C}^{\hat{\beta}}_{\text{loc}} \) along free boundary points, where \( \hat{\beta} > 1 \) is given by \eqref{beta-exponent}.

Notably, when \( m > 0 \), we obtain
\[
\hat{\beta}(\theta, m) = \frac{2+\theta}{1+\theta - m} > 1 + \frac{1}{1+\theta} \quad \Rightarrow \quad \text{enhanced smoothness along the free boundary}.
\]
In other words, within a small radius, viscosity solutions satisfy the sharp growth estimates:
\begin{equation}\label{Growth-Control}
\varrho^{\hat{\beta}} \lesssim \sup_{\mathrm{B}_{\varrho}(x_0)} \omega(x) \lesssim \varrho^{\hat{\beta}}.
\end{equation}

In this final part, we investigate Hénon-type equations governed by quasilinear elliptic PDEs in non-divergence form, incorporating linear or sublinear Hamiltonians and strong absorption terms, as follows:
\begin{equation}\label{Eq-Henon-type}
|\nabla u|^\theta \Delta_p^\mathrm{N} u(x) + \mathscr{H}(\nabla u,x) = \mathfrak{f}(x) u_+^{m}(x) \quad \text{in} \quad \mathrm{B}_1,
\end{equation}
where the weight function \( \mathfrak{f} : \mathrm{B}_1 \to \mathbb{R}_+ \) satisfies the following condition: given \( \alpha > 0 \) and a closed subset \( \mathrm{F} \Subset \mathrm{B}_1 \), we assume
\begin{equation}\label{Homo_cond}
\mathrm{dist}(x, \mathrm{F})^{\alpha} \lesssim \mathfrak{f}(x) \lesssim \mathrm{dist}(x, \mathrm{F})^{\alpha}.
\end{equation}

\medskip

The mathematical investigation of equation \eqref{Eq-Henon-type} is of considerable importance, not only due to its potential applications but also because of its intrinsic connection to a variety of free boundary problems that have been extensively studied in the literature (see, for instance, Alt–Phillips \cite{Alt-Phillips86}, D\'{i}az \cite{Diaz85}, Friedman–Phillips \cite{Fried-Phill84}, Phillips \cite{Phillips83}, and Shahgholian et al. \cite{KKPS2000, Lee-Shahg2003} for related variational problems. For non-variational counterparts, refer to Da Silva et al. \cite{ADaSS25, daSLR21} and Teixeira \cite{Teix16}).

The results presented in this section constitute a natural extension of those addressed by Alcantara \textit{et al.} in \cite{ADaSS25}, concerning problems governed by quasilinear elliptic models in non-divergence form with strong absorption terms, such as \eqref{Eq-Henon-type}, but without Hamiltonian contributions.

\medskip

We now turn our attention to the existence of viscosity solutions for the Dirichlet problem \eqref{Eq-Henon-type}. This existing result is obtained via Perron's method, under the assumption that a suitable version of the Comparison Principle holds. Specifically, consider the functions \(\overline{u}\) and \(\underline{u}\), which solve the following boundary value problems:
\begin{equation}
\left\{
\begin{array}{rclcl}
|\nabla \overline{u}|^{\theta} \Delta_p^{\mathrm{N}} \overline{u}+\mathscr{H}(\nabla \overline{u},x) & = & 0 & \text{in } &\Omega, \\
\overline{u}(x) & = & g(x) & \text{on }  & \partial \Omega,
\end{array}
\right.
\end{equation}
and
\begin{equation}
 \left\{
\begin{array}{rclcl}
|\nabla \underline{u}|^{\theta} \Delta_p^{\mathrm{N}} \underline{u}+\mathscr{H}(\nabla\underline{u},x) & = & \|\mathfrak{f}\|_{L^{\infty}(\Omega)}\|g\|_{L^{\infty}(\Omega)}^{m} & \text{in } &\Omega, \\
\underline{u}(x) & = & g(x) & \text{on }  & \partial \Omega.
\end{array}
\right.
\end{equation}

The existence of such functions can be established using classical techniques. Moreover, it is straightforward to verify that \(\overline{u}\) and \(\underline{u}\) serve as a supersolution and a subsolution, respectively, to \eqref{Eq-Henon-type}. Consequently, by applying the Comparison Principle (Lemma \ref{Comp-Princ}), Perron's method ensures the existence of a viscosity solution in \(\mathrm{C}^0(\Omega)\) to \eqref{Eq-Henon-type}. More precisely, we obtain the following theorem.

\begin{theorem}[\bf Existence and Uniqueness]\label{exist_uniq}
Let $h \in \mathrm{C}^0([0,\infty))$ be a bounded, increasing, real-valued function satisfying $h(0) = 0$, and let $\mathfrak{a} \in \mathrm{C}^0(\overline{\Omega})$ be non-negative. Suppose there exist a viscosity subsolution $\underline{u} \in \mathrm{C}^0(\Omega) \cap \mathrm{C}^{0,1}(\Omega)$ and a viscosity supersolution $\overline{u} \in \mathrm{C}^0(\Omega) \cap \mathrm{C}^{0,1}(\Omega)$ to the equation
\begin{equation}
|\nabla u|^{\theta} \Delta_p^{\mathrm{N}} u+\mathscr{H}(\nabla u,x) = \mathfrak{a}(x)h(u) \quad \text{in} \quad \Omega,
\end{equation}
such that $\underline{u}= \overline{u} = g \in \mathrm{C}^0(\partial \Omega)$. Define the class of admissible functions:
\begin{equation}
\mathrm{S}_g(\Omega) := \left\{ v \in \mathrm{C}^0(\Omega) \mid v \text{ is a viscosity supersolution of } |\nabla u|^{\theta} \Delta_p^{\mathrm{N}} u+\mathscr{H}(\nabla u,x) = \mathfrak{a}(x)h(u) \right.
\end{equation}
\begin{equation*}
    \left. \text{ in } \Omega, \ \text{such that } \underline{u} \leq v \leq \overline{u} \text{ and } v = g \text{ on } \partial \Omega \right\}.
\end{equation*}
Then, the function
\begin{equation}
    u(x) := \inf_{v \in \mathcal{S}_g(\Omega)} v(x), \quad \text{for all } x \in \Omega,
\end{equation}
is the unique continuous (up to the boundary) viscosity solution to the boundary value problem
\begin{equation}
\left\{
\begin{array}{rclcl}
|\nabla u|^{\theta} \Delta_p^{\mathrm{N}} u +\mathscr{H}(\nabla u,x)& = & \mathfrak{a}(x)h(u) & \text{in } & \Omega, \\
 u(x) & = & g(x) & \text{on }& \partial \Omega.
\end{array}
\right.
\end{equation}
\end{theorem}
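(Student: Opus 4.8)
The plan is to implement Perron's method in the classical viscosity-solution framework, using the Comparison Principle (Lemma~\ref{Comp-Princ}) as the decisive ingredient both for well-posedness of the construction and for uniqueness. First I would observe that the ordered pair $\underline{u} \le \overline{u}$ forces the admissible class $\mathrm{S}_g(\Omega)$ to be nonempty (it contains $\overline{u}$) and uniformly bounded, since every $v \in \mathrm{S}_g(\Omega)$ satisfies $\underline{u} \le v \le \overline{u}$ with $\underline{u} = \overline{u} = g$ on $\partial\Omega$; hence the pointwise infimum $u := \inf_{v \in \mathrm{S}_g(\Omega)} v$ is a well-defined, bounded function squeezed between $\underline{u}$ and $\overline{u}$, so in particular it attains the boundary datum $g$ continuously at $\partial\Omega$ by squeezing.

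The core of the argument is the standard trio of Perron-method lemmas adapted to the equation $F[u] := |\nabla u|^{\theta}\Delta_p^{\mathrm{N}} u + \mathscr{H}(\nabla u,x) - \mathfrak{a}(x)h(u) = 0$. First, the infimum of supersolutions: since $\mathrm{S}_g(\Omega)$ is a family of viscosity supersolutions that is uniformly bounded, the lower semicontinuous envelope $u_* = u$ (one checks $u$ is already l.s.c. after the usual envelope argument) is again a viscosity supersolution — here one uses that $h$ is increasing and continuous so that $\mathfrak{a}(x)h(\cdot)$ is monotone, which makes the infimum operation compatible with the supersolution inequality in the viscosity sense (touching a decreasing limit of supersolutions from below). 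Second, $u$ is a viscosity subsolution: if not, there is a point $x_0$ where the subsolution inequality fails, and one performs the classical bump construction — replacing $u$ near $x_0$ by $\max\{u, u + \varepsilon \psi\}$ for a small paraboloid $\psi$ supported near $x_0$ — to produce a strictly smaller admissible supersolution, contradicting minimality. The monotonicity $h(0)=0$, $h$ increasing, and $\mathfrak{a} \ge 0$ are exactly what guarantee the bumped function is still a supersolution of the equation (and still lies below $\overline{u}$ for $\varepsilon$ small). Third, combining the two, $u$ is both a viscosity sub- and supersolution, hence a viscosity solution; continuity up to $\partial\Omega$ follows from the barrier squeeze $\underline{u} \le u \le \overline{u}$ and continuity of $\underline{u},\overline{u}$ there.

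For uniqueness I would invoke Lemma~\ref{Comp-Princ}: if $u_1, u_2$ are two continuous viscosity solutions agreeing with $g$ on $\partial\Omega$, then the comparison principle applied in both directions (using again that $\mathfrak{a}(x)h(\cdot)$ is nondecreasing, so the zeroth-order term has the "good sign" for comparison) yields $u_1 \le u_2$ and $u_2 \le u_1$, whence $u_1 \equiv u_2$; in particular $u$ coincides with $\inf \mathrm{S}_g(\Omega)$.

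The main obstacle I anticipate is verifying the two viscosity-stability lemmas (infimum of supersolutions is a supersolution; the bump-construction lemma) with full rigor for this specific degenerate operator: the operator $|\nabla u|^{\theta}\Delta_p^{\mathrm{N}} u$ is only degenerate elliptic (not uniformly so up to the gradient degeneracy), and the Hamiltonian $\mathscr{H}(\nabla u,x) = \langle \mathfrak{B}(x),\nabla u\rangle|\nabla u|^{\theta} + \varrho(x)|\nabla u|^{\sigma}$ is continuous in both arguments, so the usual Perron machinery does apply, but one must check that the equation is "proper" in the Crandall–Ishii–Lions sense — which here hinges precisely on $h$ being nondecreasing and $\mathfrak{a}\ge 0$. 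Once that structural check is in place, together with the already-granted Comparison Principle, the remaining steps are routine. A secondary but minor point is the slight notational mismatch between the displayed class ($\mathrm{S}_g$) and the infimum ($\mathcal{S}_g$); I would simply treat them as the same set.
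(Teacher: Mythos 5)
Your proposal is correct and follows essentially the same route as the paper, which itself only sketches this argument: Perron's method (infimum of the admissible supersolution class, bump construction for the subsolution property, squeeze between $\underline{u}$ and $\overline{u}$ for the boundary data) combined with the Comparison Principle of Lemma~\ref{Comp-Princ} for uniqueness. Your outline is in fact more detailed than what the paper records, and your closing remark about needing the solutions to be locally Lipschitz to invoke Lemma~\ref{Comp-Princ} is exactly the point the paper addresses via Theorem~\ref{Thm1.1} in the remark following that lemma.
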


\medskip

We now establish higher regularity estimates for bounded viscosity solutions along free boundary points within this context.

\begin{theorem}[{\bf Higher Regularity Estimates}]\label{Higher_continuity}
Let $u \in \mathrm{C}^0(\mathrm{B}_1)$ be a non-negative, bounded viscosity solution to problem \eqref{Eq-Henon-type}. In the sublinear case, assume that \(\alpha\geq \frac{\sigma -m(2+\theta-\sigma)}{1+\theta-\sigma}\) and \(m\leq \frac{\sigma}{2+\theta-\sigma}\). Given $x_0 \in \mathrm{B}_{1/2} \cap \mathfrak{f}^{-1}(0)$, if $x_0 \in \mathrm{B}_{1/2} \cap \partial \{ u > 0 \}$, then
\begin{equation}\label{Higher Reg'}
u(x) \leq \mathrm{C} \cdot \|u\|_{L^{\infty}(\mathrm{B}_1)} |x - x_0|^{\frac{2+\theta+\alpha}{1+\theta - m}},
\end{equation}
for every $x \in \{u > 0\} \cap \mathrm{B}_{1/2}$, where $\mathrm{C} > 0$ depends only on universal parameters.
\end{theorem}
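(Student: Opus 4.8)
\textbf{Proof proposal for Theorem \ref{Higher_continuity}.}

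The plan is to establish the growth estimate \eqref{Higher Reg'} by a scaling/iteration argument anchored on the gradient regularity estimate of Theorem \ref{Thm1.1}, combined with the construction of the explicit supersolution profile $\Phi_{r,\hat\beta}$ already exhibited in the excerpt. The heuristic is that, near a free boundary point $x_0 \in \partial\{u>0\}$ which also lies in $\mathfrak{f}^{-1}(0)$, the weight condition \eqref{Homo_cond} forces $\mathfrak{f}(x) \lesssim |x-x_0|^\alpha$, so the effective right-hand side of \eqref{Eq-Henon-type} degenerates like $|x-x_0|^\alpha u_+^m$. Matching homogeneities in the scaling $u_r(y) := u(x_0 + ry)/r^{\gamma}$ with $\gamma := \frac{2+\theta+\alpha}{1+\theta-m}$ should leave the rescaled equation invariant (up to the degenerate absorption being controlled), and the conclusion becomes a statement that $\sup_{\mathrm B_r(x_0)} u \lesssim r^\gamma$.

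I would organize the argument as follows. First, by the normalization and covering reductions analogous to Step 4 in the proof of Theorem \ref{Thm1.1}, reduce to the case $\|u\|_{L^\infty(\mathrm B_1)}\le 1$ and small coefficients, with $x_0 = 0$. Second, set up the discrete iteration: prove by induction on $k\in\mathbb N$ that $\sup_{\mathrm B_{2^{-k}}(x_0)} u \le M\, 2^{-k\gamma}$ for a universal constant $M$. The base case is the normalization. For the inductive step, define the rescaled function $v_k(y) := 2^{k\gamma} u(2^{-k}y)$ on $\mathrm B_1$; using the specific homogeneity of $\Delta_p^{\mathrm N}$ (which is $1$-homogeneous in $D^2 u$ and $0$-homogeneous in $\nabla u$, hence the prefactor $|\nabla u|^\theta$ carries the scaling), the choice of $\gamma$, the bound $\mathfrak{f}(2^{-k}y)\lesssim (2^{-k}|y|)^\alpha$, and the constraints $m\le \frac{\sigma}{2+\theta-\sigma}$ and $\alpha\ge \frac{\sigma-m(2+\theta-\sigma)}{1+\theta-\sigma}$ in the sublinear case (which are exactly what make the rescaled Hamiltonian coefficients $\mathfrak B, \varrho$ not blow up under the scaling — this is the sublinear bookkeeping), one checks $v_k$ solves an equation of the same structural form as \eqref{Eq-Henon-type} with uniformly controlled data. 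Third, apply Theorem \ref{Thm1.1} (and, since $v_k(0)=0$ up to the induction hypothesis controlling its size, the sharp vanishing estimate of Theorem \ref{Opt_reg-Extremum-point} if $0$ is a minimum, or directly the $\mathrm C^{1,\alpha}$ bound) to get $\sup_{\mathrm B_{1/2}} v_k \le \mathrm C$ universally; unwinding the scaling gives $\sup_{\mathrm B_{2^{-(k+1)}}(x_0)} u \le \mathrm C\, 2^{-(k+1)\gamma}\cdot 2^{-\text{(gain)}}$, and absorbing the gain into $M$ closes the induction. Fourth, pass from the dyadic estimate to the continuous one for all $r\in(0,1/2)$ by monotonicity of $r\mapsto \sup_{\mathrm B_r}u$ and interpolation between consecutive scales, yielding \eqref{Higher Reg'}.

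An alternative, and perhaps cleaner, route is a direct comparison argument: one compares $u$ with the explicit radial supersolution $\Phi_{r,\hat\beta}$ of the model absorption equation constructed earlier (with $\hat\beta = \frac{2+\theta}{1+\theta-m}$, adjusted to account for the weight exponent $\alpha$, giving exponent $\gamma = \frac{2+\theta+\alpha}{1+\theta-m}$), using the Comparison Principle (Lemma \ref{Comp-Princ}) on annuli $\mathrm B_R(x_0)\setminus\mathrm B_r(x_0)$ and letting $r\to 0^+$; the boundary ordering on $\partial\mathrm B_R(x_0)$ comes from $\|u\|_{L^\infty}$ and on the inner sphere from $u=0$ at the free boundary point, while the PDE ordering uses $\mathfrak f(x)u_+^m \le \mathrm{dist}(x,\mathrm F)^\alpha u_+^m$ and the fact that $\Phi$ was built to absorb exactly a term of this form. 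I expect the main obstacle to be the bookkeeping in the \emph{sublinear regime}: one must verify that after rescaling, the coefficient $\varrho(x)=(\mathrm c_{n,\theta,\hat\beta,p}\hat\beta)^{1+\theta-\sigma}(|x-x_0|-r)_+^{(\hat\beta-1)(1+\theta-\sigma)-1}$ and the associated admissibility condition $(\textbf{H3})$, namely $\sigma\in(\theta,1+\theta)$, remain compatible with the degenerate weight — this is precisely where the hypotheses $m\le\frac{\sigma}{2+\theta-\sigma}$ and $\alpha\ge\frac{\sigma-m(2+\theta-\sigma)}{1+\theta-\sigma}$ enter, and getting the inequalities to point the right way (so the $|\xi|^\sigma$ term is a subsolution-friendly perturbation rather than an obstruction) requires care. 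A secondary technical point is ensuring $x_0$ being merely a free boundary point (not necessarily an interior extremum) still allows invoking the sharp estimate; this is handled by noting $u\ge 0$ with $u(x_0)=0$, so $x_0$ is automatically a minimum of $u$, and Theorem \ref{Opt_reg-Extremum-point} applies at $x_0$ to the rescaled functions.
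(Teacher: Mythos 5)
Your scaling setup is on target: the exponent $\gamma=\frac{2+\theta+\alpha}{1+\theta-m}$, the dyadic reduction, and the observation that the sublinear hypotheses on $\alpha$ and $m$ are exactly what keep the rescaled coefficient $\varrho_k$ under control all match the paper's bookkeeping. The gap is in how you close the inductive step. Theorem \ref{Thm1.1} applied to $v_k$ only yields a bounded $\mathrm{C}^{1,\alpha}$ norm, i.e.\ $\sup_{\mathrm{B}_{1/2}}v_k\le\mathrm{C}$, which unwinds to $\sup_{\mathrm{B}_{2^{-(k+1)}}}u\le \mathrm{C}\,2^{\gamma}\,2^{-(k+1)\gamma}$ with a constant that does not improve from one scale to the next (it depends on the $L^\infty$ bound $M$ of $v_k$, so the required inequality $\mathrm{C}(M+\dots)\le M2^{-\gamma}$ fails); and Theorem \ref{Opt_reg-Extremum-point} gives decay only at the rate $1+\frac{1}{1+\theta}=\frac{2+\theta}{1+\theta}$, which is strictly smaller than $\gamma$ whenever $m>0$ or $\alpha>0$, so it cannot produce the rate $r^{\gamma}$. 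The ``(gain)'' you invoke is never identified. The paper obtains it by proving the dyadic dichotomy $\mathrm{s}_{j+1}\le\max\{\mathrm{C}2^{-\bar\beta(j+1)},\,2^{-\bar\beta}\mathrm{s}_j\}$ by contradiction: if it fails along a sequence $j_k$, the normalized rescalings $u_k=u(2^{-j_k}\cdot)/\mathrm{s}_{j_k+1}$ have right-hand side and Hamiltonian coefficients tending to zero (this is where the hypotheses on $\alpha$ and $m$ enter), so by the compactness of Lemma \ref{Holderest} and stability they converge to a nonnegative viscosity solution of $|\nabla u_\infty|^{\theta}\Delta_p^{\mathrm{N}}u_\infty=0$ with $u_\infty(0)=0$ and $\sup_{\overline{\mathrm{B}_{1/2}}}u_\infty=1$; Lemma \ref{CT-Lemma}, Theorem \ref{Thm-Equiv-Sol} and V\'azquez's strong maximum principle then force $u_\infty\equiv0$, a contradiction. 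That compactness step is the missing mechanism in your induction.

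Your alternative comparison route does not repair this. The radial profile $\Phi_{r,\hat\beta}$ of \eqref{Radial-profile} is an exact solution only for the very specific coefficients $\mathfrak{B},\varrho$ listed there, not a supersolution for general data; and for an \emph{upper} bound you would need a barrier that simultaneously dominates $\|u\|_{L^{\infty}}$ on $\partial\mathrm{B}_R(x_0)$ (forcing its multiplicative constant to be large) and satisfies the strict supersolution inequality required by Lemma \ref{Comp-Princ} (forcing that constant to be small); these requirements are incompatible. Comparison with such barriers is precisely what proves the \emph{lower} bound, i.e.\ the non-degeneracy of Theorem \ref{NDHTE}, not the growth estimate \eqref{Higher Reg'}. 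A direct, non-compactness proof of the upper bound would require a quantitative improvement-of-flatness at the free boundary, which is a genuinely different statement from anything you cite.
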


\begin{remark}
    As an immediate consequence of the Strong Maximum Principle (Theorem \ref{thm:SMP}), we obtain the following result: 
If $u$ is a nonnegative, bounded viscosity solution to \eqref{Eq-Henon-type} with $m = \gamma + 1$, then  either $u > 0$ or $u \equiv 0$ in $B_1$.
\end{remark}

\begin{example}
As an illustrative example, let \( p \in (1,  \infty) \) and \(\alpha \geq \frac{\sigma}{1+\theta-\sigma}\) be fixed constants. Consider the function defined by  
\[
u_{i}(x_1, \ldots, x_n) =  \mathfrak{c}_{p,\alpha,\theta} |x_i|^{\frac{2+\alpha+\theta}{1+\theta}}, \quad x \in \mathrm{B}_1,
\]
with 
\[
\mathfrak{c}_{p,\alpha,\theta} = \frac{(1+\theta)^{\frac{\theta}{1+\theta}} \left[(1+\alpha)(p-1)\right]^{\frac{1}{1+\theta}}}{2+\theta+\alpha},
\]
which satisfies the following equation in the viscosity sense:
\[
|\nabla u_{i}|^{\theta} \Delta^{\mathrm{N}}_p u_{i}(x) + \mathscr{H}(\nabla u_{i},x) = 2|x_i|^{\alpha} + x_{i} |x_{i}|^{1+\alpha} \lesssim |x|^{\alpha} \quad \text{in} \quad \mathrm{B}_1,
\]
where \(\mathfrak{B}(x) = \mathfrak{c}_{p,\alpha,\theta}^{-(1+\theta)} \left(\frac{2+\alpha+\theta}{1+\theta}\right)^{-(1+\theta)} e_{i}\) and  \(\varrho(x) = \mathfrak{c}_{p,\alpha,\theta}^{-\sigma} \left(\frac{2+\alpha+\theta}{1+\theta}\right)^{-\sigma} |x_{i}|^{\alpha - \frac{1+\alpha}{1+\theta} \sigma}\). 

Moreover, the singular (or critical) set is given by
\[
\mathrm{Sing}_u(\mathrm{B}_1) = \{x \in \mathrm{B}_1 : |\nabla u| = 0\} = \{x \in \mathrm{B}_1 : x_i = 0\}.
\]
In particular, we have \( u \in \mathrm{C}_{\text{loc}}^{1, \frac{1+\alpha}{1+\theta}}(\mathrm{B}_1) \).
\end{example}

In contrast with problem \eqref{Problem}, the following result rigorously characterizes the sharp asymptotic behavior with which non-negative viscosity solutions to \eqref{Eq-Henon-type} vanish away from their dead-core regions. This information is crucial in the analysis of several free boundary problems (see \cite{KKPS2000}, \cite{Lee-Shahg2003}, and \cite{Teix16}), and plays a pivotal role in establishing weak geometric properties.

\begin{theorem}[{\bf Non-degeneracy}]\label{NDHTE}
Let \(u \in \mathrm{C}^{0}(\mathrm{B}_{1})\) be a non-negative viscosity solution to \eqref{Eq-Henon-type}, and let \(x_{0} \in \overline{\{u > 0\}} \cap \mathrm{B}_{1/2}\). Suppose further that \(\mathfrak{f}(x) \geq \mathrm{c}_0 |x|^{\alpha}\) in \(\mathrm{B}_1\), and that \(\sigma > 1 + \theta\). Then, there exists a constant \(\mathrm{C}_{\mathrm{ND}} > 0\), depending only on \(n\), \(p\), \(\theta\), \(\sigma\), \(\mathfrak{c}_{0}\), \(\|\mathfrak{B}\|_{L^{\infty}(\mathrm{B}_{1};\mathbb{R}^{n})}\), and \(\|\varrho\|_{L^{\infty}(\mathrm{B}_{1})}\), such that for all \(r \in (0,1/2)\), we have
\[
\sup_{\partial \mathrm{B}_{r}(x_0)} u \geq \mathrm{C}_{\mathrm{ND}} r^{\frac{2+\theta+\alpha}{1+\theta - m}}.
\]   
\end{theorem}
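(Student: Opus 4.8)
\textbf{Proof strategy for Theorem \ref{NDHTE} (Non-degeneracy).}

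The plan is to use the classical \emph{comparison-with-a-barrier} argument that underlies all non-degeneracy estimates of Alt--Caffarelli / Alt--Phillips type, adapted to the degenerate quasilinear operator with Hamiltonian. Fix $x_0 \in \overline{\{u>0\}} \cap \mathrm{B}_{1/2}$ and $r \in (0,1/2)$. By continuity of $u$ it suffices to prove the estimate for $x_0 \in \{u>0\}$ and then pass to the limit. The idea is to build an explicit radial subsolution
\[
\Psi(x) \defeq \mathrm{c}\,|x-x_0|^{\hat{\beta}}, \qquad \hat{\beta} = \frac{2+\theta+\alpha}{1+\theta-m},
\]
with $\mathrm{c}>0$ a small universal constant chosen below, and to compare $u$ with $\Psi$ on the annular region $\mathrm{A} = \mathrm{B}_r(x_0)\setminus\{x_0\}$. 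First I would compute, for $x \neq x_0$, the quantity $|\nabla \Psi|^{\theta}\Delta_p^{\mathrm{N}}\Psi(x) + \mathscr{H}(\nabla\Psi,x)$: since $\Psi$ is radial and increasing, $\Delta_p^{\mathrm{N}}\Psi = \Psi'' + (p-1)\frac{n-1}{|x-x_0|}\,\text{(lower order)}$ — more precisely $\Delta_p^{\mathrm{N}}\Psi = [(p-1)(\hat{\beta}-1)+ (n-1)]\,\mathrm{c}\hat{\beta}|x-x_0|^{\hat{\beta}-2}$ up to the exact radial computation — and $|\nabla\Psi|^{\theta} = (\mathrm{c}\hat\beta)^{\theta}|x-x_0|^{(\hat\beta-1)\theta}$, so the leading term scales like $|x-x_0|^{(\hat\beta-1)(1+\theta)-1} = |x-x_0|^{(\hat\beta-1)\theta + \hat\beta - 2}$. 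The exponent on the right-hand side is $\mathfrak{f}(x)\Psi^m \gtrsim \mathrm{c}_0|x|^{\alpha}\mathrm{c}^m|x-x_0|^{\hat\beta m}$, and the defining choice of $\hat\beta$ is exactly what equates the powers $(\hat\beta-1)(1+\theta)-1 = \alpha + \hat\beta m$ (using that on a ball of radius $r\le 1/2$ one absorbs $|x|^{\alpha}$ against $|x-x_0|^{\alpha}$ via the lower bound on $\mathfrak f$, possibly shrinking to the sub-annulus where this is legitimate, or more simply keeping $\mathfrak f(x)\ge \mathrm c_0|x|^\alpha$ and treating $|x|^\alpha$ as bounded below near a fixed $x_0$ — I would handle the $x$ versus $x_0$ discrepancy by the standard trick of first proving the estimate at $x_0 = 0$ and using $\mathfrak f(x)\ge \mathrm c_0|x|^\alpha$, then translating). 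The sublinear/linear Hamiltonian term $\mathscr{H}(\nabla\Psi,x) = \langle\mathfrak{B},\nabla\Psi\rangle|\nabla\Psi|^{\theta} + \varrho|\nabla\Psi|^{\sigma}$ contributes terms of order $|x-x_0|^{(\hat\beta-1)(1+\theta)}$ and $|x-x_0|^{(\hat\beta-1)\sigma}$ respectively; since $\sigma > 1+\theta$ (this is exactly where the hypothesis $\sigma>1+\theta$ enters — it forces the $\varrho$-term to be \emph{higher order} near $x_0$, rather than competing with the principal part) and the $\mathfrak B$-term is one power of $|x-x_0|$ higher than the principal term, both are negligible on $\mathrm{B}_r(x_0)$ for $r$ small, and can be absorbed by choosing $\mathrm{c}$ small and using $r\le 1/2$. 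This shows that for a suitable universal $\mathrm{c}>0$,
\[
|\nabla\Psi|^{\theta}\Delta_p^{\mathrm{N}}\Psi + \mathscr{H}(\nabla\Psi,x) \;\le\; \mathrm{c}_0|x|^{\alpha}\Psi^m \;\le\; \mathfrak{f}(x)\,\Psi_+^m \quad\text{in } \mathrm{A},
\]
i.e.\ $\Psi$ is a viscosity subsolution of \eqref{Eq-Henon-type} in $\mathrm{A}$.

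Next I would run the comparison. Suppose, for contradiction, that $\sup_{\partial \mathrm{B}_r(x_0)} u < \mathrm{c}\, r^{\hat\beta} = \sup_{\partial\mathrm{B}_r(x_0)}\Psi$. On $\partial\mathrm{B}_r(x_0)$ we then have $u < \Psi$, and at the center $u(x_0) \ge 0 = \Psi(x_0)$ is the wrong inequality, so one sets up the comparison on the annulus between a small inner sphere $\partial\mathrm{B}_\epsilon(x_0)$ and $\partial\mathrm{B}_r(x_0)$: on $\partial\mathrm{B}_\epsilon(x_0)$, $\Psi = \mathrm{c}\epsilon^{\hat\beta} \to 0$ while $u(x_0)>0$ (in the case $x_0\in\{u>0\}$) gives $u > \Psi$ there for $\epsilon$ small; on $\partial\mathrm{B}_r(x_0)$, $u<\Psi$. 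Applying the Comparison Principle (Lemma \ref{Comp-Princ}) for supersolution $u$ and subsolution $\Psi$ on this annulus — note the operator is proper/degenerate elliptic and the comparison applies since we are away from $x_0$ where $\nabla\Psi\neq 0$ — would force $u \le \Psi$ somewhere it should be $\ge$, or more cleanly: $\max_{\overline{\mathrm A}}(\Psi - u)$ is attained on the boundary, but $\Psi-u<0$ on $\partial\mathrm B_r(x_0)$ and $\Psi - u<0$ on $\partial\mathrm B_\epsilon(x_0)$, forcing $\Psi\le u$ throughout $\mathrm A$, contradicting that at points of $\partial\{u>0\}\cap\mathrm B_r(x_0)$ (which exist since $x_0\in\overline{\{u>0\}}$ and $u$ cannot be positive on all of $\mathrm B_r(x_0)$ unless... ) — here one must be slightly careful. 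The cleanest route: take a sequence $y_j \to x_0$ with $u(y_j)>0$, apply the argument with $y_j$ in place of $x_0$, get $\sup_{\partial\mathrm B_r(y_j)} u \ge \mathrm c\, r^{\hat\beta}$ for all $r$ with $\mathrm B_r(y_j)\subset\mathrm B_1$, and let $j\to\infty$ to conclude $\sup_{\partial\mathrm B_r(x_0)} u \ge \mathrm c\, r^{\hat\beta}$ by continuity. This yields the theorem with $\mathrm{C}_{\mathrm{ND}} = \mathrm{c}$, whose dependence is exactly on $n,p,\theta,\sigma,\mathrm{c}_0, \|\mathfrak B\|_{L^\infty}, \|\varrho\|_{L^\infty}$ as claimed.

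The main obstacle — and the point requiring the most care — is the verification that $\Psi$ is genuinely a subsolution on the whole annulus $\mathrm B_r(x_0)$ with \emph{all} lower-order contributions (the $(n-1)$-curvature term of $\Delta_p^{\mathrm N}$, the $\mathfrak B$-drift term, and the $\varrho|\nabla\Psi|^\sigma$ term) absorbed \emph{uniformly} and with the constant $\mathrm c$ chosen \emph{independently of $r$}. This forces the precise bookkeeping of exponents: the principal term has radial order $(\hat\beta-1)(1+\theta)-1$, which by the definition \eqref{beta-exponent}-type choice of $\hat\beta$ coincides with $\alpha + m\hat\beta$; the drift term has order $(\hat\beta-1)(1+\theta) = [\text{principal order}] + 1 \ge [\text{principal}]$ on $r\le 1/2$; and the $\varrho$-term has order $(\hat\beta-1)\sigma$, which exceeds $\alpha + m\hat\beta$ \emph{precisely because} $\sigma>1+\theta > 1+\theta - (\text{something})$ — this is the structural reason the hypothesis $\sigma>1+\theta$ appears, and one should double-check the inequality $(\hat\beta-1)\sigma > (\hat\beta-1)(1+\theta)-1$, equivalently $(\hat\beta-1)(\sigma-1-\theta) > -1$, which holds trivially since the left side is positive. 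A secondary technical point is the $|x|^\alpha$ versus $|x-x_0|^\alpha$ mismatch in $\mathfrak f(x)\ge \mathrm c_0|x|^\alpha$: this is resolved by first treating $x_0=0$ (where the two coincide) and then observing that for general $x_0\in\mathrm B_{1/2}$ one only needs a lower bound $\mathfrak f \ge \mathrm c_0 \eta^\alpha$ on $\mathrm B_r(x_0)$ for the relevant comparison when $x_0 \neq 0$, or — if one wants the stated uniform constant — restricting attention to the case $x_0 = 0$ is in fact what the hypothesis $\mathfrak f(x)\ge \mathrm c_0|x|^\alpha$ is tailored for, and the statement should be read accordingly. Modulo these exponent checks, everything else is a routine application of the comparison principle.
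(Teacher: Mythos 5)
Your barrier and the exponent bookkeeping coincide with the paper's proof: the paper takes exactly $\psi(x)=\kappa|x|^{\hat\beta}$ with $\hat\beta=\frac{2+\theta+\alpha}{1+\theta-m}$, verifies $|\nabla\psi|^{\theta}\Delta_p^{\mathrm N}\psi+\mathscr H(\nabla\psi,x)<\mathfrak c_0|x|^{\alpha}\psi_+^m$ by the same accounting you describe (principal radial order $(\hat\beta-1)(1+\theta)-1=\alpha+\hat\beta m$, drift one power higher, $\varrho$-term absorbed because $\sigma>1+\theta$ gives $(\kappa\hat\beta)^{\sigma}\le(\kappa\hat\beta)^{1+\theta}\hat\beta^{\sigma-1-\theta}$ for $\kappa<1$), reduces to $x_0=0$ and to $x_0\in\{u>0\}$ by continuity, and closes with the comparison principle. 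So the strategy is the right one and the computational core of your proposal is sound.

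The gap is in the comparison step. First, the roles are inverted: under the paper's sign convention (Definition \ref{DefViscSol}, where ``$\ge$'' is a subsolution and ``$\le$'' a supersolution), the inequality you actually establish, $L\Psi\le\mathfrak c_0|x|^{\alpha}\Psi_+^m$, makes $\Psi$ a \emph{strict supersolution} of the proper equation $Lw-\mathfrak c_0|x|^{\alpha}w_+^m=0$, while $u$, which satisfies $Lu=\mathfrak f\,u_+^m\ge\mathfrak c_0|x|^{\alpha}u_+^m$, is the \emph{subsolution}; you label them the other way and then invoke Lemma \ref{Comp-Princ} ``for supersolution $u$ and subsolution $\Psi$''. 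Second, and more seriously, the annulus you set up carries \emph{mixed} boundary ordering ($u>\Psi$ on $\partial\mathrm B_{\epsilon}(x_0)$, $u<\Psi$ on $\partial\mathrm B_r(x_0)$), so no comparison principle applies on it, and your attempted rescue (``$\Psi-u<0$ on $\partial\mathrm B_r(x_0)$'') contradicts the sentence before it. The annulus is unnecessary: assuming for contradiction that $u<\psi$ on all of $\partial\mathrm B_r(x_0)$, one compares on the \emph{full ball} $\mathrm B_r(x_0)$ — $\psi$ is $\mathrm C^1$ there since $\hat\beta>1$ — and Lemma \ref{Comp-Princ} with $\mathfrak c(x)=-\mathfrak c_0|x|^{\alpha}\le0$ and $\mathfrak F(t)=t_+^m$ yields $u\le\psi$ throughout $\mathrm B_r(x_0)$, which is falsified at the center where $u(x_0)>0=\psi(x_0)$; this is precisely how the paper (following its Theorem \ref{Thm-Non-Deg}) concludes. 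The fact that $u(x_0)>\Psi(x_0)$ is not ``the wrong inequality'' to be engineered around — it \emph{is} the contradiction. With the full-ball comparison and the sub/supersolution labels corrected, the rest of your argument goes through.
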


\begin{remark}
By inspecting the proof of the result above, one can also derive non-degeneracy estimates in the sublinear case \(\theta<\sigma < 1 + \theta\), for instance, when \(\varrho \leq 0\).
\end{remark}

\section{Some motivation and literature review}\label{Section02}

\subsection*{Fully nonlinear models with Hamiltonian terms}

The mathematical literature concerning problems governed by fully nonlinear partial differential equations (PDEs) with Hamiltonian terms is relatively recent but has already produced substantial contributions to elliptic regularity theory.

In \cite{BirDem15}, Berindelli and Demengel initiated the study of the regularity properties of solutions to a class of fully nonlinear elliptic PDEs that include a sublinear first-order term of the form
\[
F(D^2u) + b(x)|\nabla u|^{\beta} = f(x) \quad \text{in} \quad \Omega \subset \mathbb{R}^n,
\]
where \( F: \mathrm{Sym}(n) \to \mathbb{R} \) satisfies the uniform ellipticity condition \eqref{Unif-Ellipticity}, and $\beta \in (0, 1)$.

The authors demonstrate that solutions to these PDEs exhibit local Hölder continuous gradients in the interior of the domain. Specifically, they establish the following estimate:
\[
\|u\|_{\mathrm{C}^{1,\gamma}(B_r(x_0))} \leq \mathrm{C}(\gamma) \left( \|u\|_{L^{\infty}(\Omega)} + \|b\|_{L^{\infty}(\Omega)}^{\frac{1}{1 - \beta}} + \|f\|_{L^{\infty}(\Omega)}^{\frac{1}{1+\alpha}} \right),
\]
provided that \( B_r(x_0) \subset \Omega \) for any \( r \in (0, 1) \), where \( \gamma \in (0, 1) \) depends on the ellipticity constants of \( F \), as well as on \( \|b\|_{L^{\infty}(\Omega)} \), \( \omega_b \) (modulus of continuity of $b$), and \( \beta \).

To obtain their results, the authors employ an ``improvement of flatness'' lemma: if a solution is approximately flat—meaning close to a linear profile—at a given scale, then it becomes flatter at smaller scales. Iterating this process yields the desired regularity. Their approach also relies on compactness arguments and builds upon prior work in the field, adapting existing techniques to accommodate the sublinear nature of the first-order term.

Recently, da Silva and Nornberg, in \cite{daSNorb21} (see the references therein for an extensive list of related works), established sharp regularity estimates for \( L^p \)-viscosity solutions of fully nonlinear second-order equations of the form
\[
F(x, Du, D^2 u) = f(x) \quad \text{in} \quad \Omega,
\]
where \( \Omega \subset \mathbb{R}^n \) is a bounded domain, and \( F \) denotes a uniformly elliptic operator satisfying appropriate growth conditions with respect to the gradient variable. The source term \( f \) belongs to a suitable Lebesgue space \( L^p(\Omega) \), with \( p \in (p_0, +\infty] \), where \( p_0 \) is the Escauriaza exponent.

In this setting, \( F: \Omega \times \mathbb{R}^n \times \mathrm{Sym}(n) \to \mathbb{R} \) is a measurable function, and \( H: \Omega \times \mathbb{R}^n \times \mathbb{R}^n \to \mathbb{R} \) represents the Hamiltonian term. The following structural conditions are imposed:
\begin{itemize}
\item[(A1)] \( F(\cdot, 0, 0) \equiv 0 \), and for any \( \xi, \eta \in \mathbb{R}^n \) and \( \mathrm{X}, \mathrm{Y} \in \mathrm{Sym}(n) \), the inequality
\[
\mathcal{M}_{\lambda, \Lambda}^-(\mathrm{X} - \mathrm{Y}) - H(x, \xi, \eta)
\leq F(x, \xi, \mathrm{X}) - F(x, \eta, \mathrm{Y}) \leq \mathcal{M}_{\lambda, \Lambda}^+(\mathrm{X} - \mathrm{Y}) + H(x, \xi, \eta)
\]
holds, where \( H \) takes one of the following two forms, for some \( q, \varrho \geq p \) with \( q, \varrho > n \), and \( \mathcal{M}^{\pm}_{\lambda, \Lambda}(\cdot) \) denote the \textit{Pucci extremal operators}, as defined in \eqref{Pucci}:
\begin{enumerate}
\item (\textbf{Sublinear regime}) For \( 0 < m < 1 \),
\[
H(x, \xi, \eta) = b(x) |\xi - \eta| + \mu(x) |\xi - \eta|^m;
\]

\item (\textbf{Superlinear/Quadratic regime}) For \( 1 < m \leq 2 \),
\[
H(x, \xi, \eta) = b(x) |\xi - \eta| + \mu(x) |\xi - \eta| \left( |\xi|^{m-1} + |\eta|^{m-1} \right);
\]
\end{enumerate}
where \( b \in L^{\varrho}_+(\Omega) \) and \( \mu \in L^q_+(\Omega) \), with \( q, \varrho \in (n, +\infty) \); additionally, \( \mu(x) \equiv \mu \geq 0 \) if \( m = 2 \).
\end{itemize}

It is important to emphasize that the authors’ methodology addresses both the sublinear and superlinear gradient regimes within a unified analytical framework.

In this context, the authors prove gradient estimates in \cite[Theorem 1.6]{daSNorb21} across all growth regimes, under suitable integrability conditions on the data and the coefficients of the operator.

\medskip

Concerning fully nonlinear models of degenerate type, Birindelli and Demengel in \cite{BirDem14} investigated the regularity of viscosity solutions to the following Dirichlet boundary value problem:
\begin{equation}\label{Eq01BD}
\left\{
\begin{array}{rcll}
|\nabla u|^\alpha \left( F(D^2 u)  + h(x) \cdot \nabla u \right)  &=& f(x) & \text{in } \Omega, \\
u(x) &=& \varphi(x) & \text{on } \partial \Omega,
\end{array}
\right.
\end{equation}
where \(\Omega \subset \mathbb{R}^n\) is a bounded domain of class \(\mathrm{C}^2\), \(\alpha \geq 0\), \(h \in C(\overline{\Omega}, \mathbb{R}^n)\), \(f \in C(\overline{\Omega})\), \(\varphi \in \mathrm{C}^{1,\beta_0}(\partial \Omega)\), and \(F\) is a uniformly elliptic operator; that is, there exist constants \(0 < \lambda \leq \Lambda\) such that for all symmetric matrices \(\mathrm{M}, \mathrm{N} \in \mathrm{Sym}(n)\) with \(\mathrm{N} \geq 0\),
\begin{equation}\label{Unif-Ellipticity}
\mathcal{M}^{-}_{\lambda, \Lambda}(\mathrm{N}) \leq F(\mathrm{M} + \mathrm{N}) - F(\mathrm{M}) \leq \mathcal{M}^{+}_{\lambda, \Lambda}(\mathrm{N}),
\end{equation}
where \(\mathcal{M}^{\pm}_{\lambda, \Lambda}\) denote the Pucci extremal operators.

Under these assumptions, the authors establish the existence of constants \(\mathrm{C} = \mathrm{C}(\beta)\) and \(\beta = \beta(\lambda, \Lambda, \|f\|_{L^{\infty}(\Omega)}, n, \Omega, \|h\|_{L^{\infty}(\Omega)}, \beta_0)\) such that any viscosity solution to problem \eqref{Eq01BD} satisfies the following regularity estimate:
\[
\|u\|_{\mathrm{C}^{1,\beta}(\overline{\Omega})} \leq \mathrm{C} \left( \|\varphi\|_{\mathrm{C}^{1,\beta_0}(\partial \Omega)} + \|u\|_{L^{\infty}(\Omega)} + \|f\|_{L^{\infty}(\Omega)}^{\frac{1}{1+\alpha}} \right).
\]

Subsequently, Birindelli and Demengel in \cite{BirDem16} addressed the boundary regularity of viscosity solutions to the following Dirichlet problem with a linear or sublinear gradient term:
\begin{equation}\label{Eq02BD}
\left\{
\begin{array}{rcll}
|\nabla u|^{\alpha} F(D^2 u) + |\nabla u|^{\beta} b(x) &=& f(x) & \text{in } \Omega, \\
u(x) &=& \varphi(x) & \text{on } \partial \Omega,
\end{array}
\right.
\end{equation}
where \(\Omega \subset \mathbb{R}^n\) is a bounded domain of class \(\mathrm{C}^2\), \(\alpha > -1\), \(\beta \in [0, \alpha + 1]\), \(b, f \in C(\overline{\Omega})\), \(\varphi \in \mathrm{C}^{1,\gamma_0}(\partial \Omega)\), and \(F\) is a uniformly elliptic operator. That is, there exist constants \(0 < \lambda < \Lambda\) such that for all \(\mathrm{M}, \mathrm{N} \in \mathrm{Sym}(n)\) with \(\mathrm{N} \geq 0\), the inequality
\[
\lambda \, \mathrm{tr}(\mathrm{N}) \leq F(\mathrm{M} + \mathrm{N}) - F(\mathrm{M}) \leq \Lambda \, \mathrm{tr}(\mathrm{N}),
\]
holds, with normalization \(F(0) = 0\).

In this context, the authors prove the existence of a constant \(\gamma \in (0, \gamma_0)\), depending only on \(\lambda\), \(\Lambda\), \(\alpha\), and \(\beta\), and a constant \(\mathrm{C} = \mathrm{C}(\gamma)\), such that any bounded viscosity solution to problem \eqref{Eq02BD} belongs to the space \(\mathrm{C}^{1,\gamma}(\overline{\Omega})\). Moreover, in the sublinear regime \(\beta < \alpha + 1\), the following estimate holds:
\[
\|u\|_{\mathrm{C}^{1,\gamma}(\overline{\Omega})} \leq \mathrm{C} \left( \|u\|_{L^{\infty}(\Omega)} + \|b\|_{L^{\infty}(\Omega)}^{\frac{1}{1+\alpha - \beta}} + \|f\|_{L^{\infty}(\Omega)}^{\frac{1}{1+\alpha}} + \|\varphi\|_{\mathrm{C}^{1,\gamma_0}(\partial \Omega)} \right).
\]
Additionally, in the linear case \(\beta = \alpha + 1\), the Hölder exponent \(\gamma\) also depends on the norm \(\|b\|_{L^{\infty}(\Omega)}\).

\medskip

In the work \cite{BirDemLeoni19}, Birindelli and Demengel addressed the existence and uniqueness of viscosity solutions to the Dirichlet problem
\begin{equation}\label{Eq03BD}
\left\{
\begin{array}{rcrcl}
-F(\nabla u, D^2 u) +  b(x)|\nabla u|^{\beta} + \lambda^{\ast}|u|^{\alpha}u & = & f(x) & \text{in } &\Omega, \\
u(x) & = & \varphi(x) & \text{on } & \partial \Omega,
\end{array}
\right.
\end{equation}
where \(\Omega \subset \mathbb{R}^n\) is a bounded open set of class \(\mathrm{C}^2\). Moreover, the operator \(F\) satisfies the following structural assumptions:
\begin{itemize}
    \item[(1)] \(F : \mathbb{R}^n \setminus \{0\} \times \text{Sym}(n) \to \mathbb{R}\) is a continuous function.
    
    \item[(2)] \(F(\xi, \mathrm{M})\) is homogeneous of degree \(\alpha > -1\) with respect to the variable \(\xi\), and positively homogeneous of degree one with respect to \(\mathrm{M}\). Furthermore, there exist constants \(\Lambda \geq \lambda > 0\) such that
    \begin{equation}\label{eq:ellipticity}
        \lambda|\xi|^{\alpha} \mathrm{tr}(\mathrm{N}) \leq F(\xi, \mathrm{M} + \mathrm{N}) - F(\xi, \mathrm{M}) \leq \Lambda|\xi|^{\alpha} \mathrm{tr}(\mathrm{N})
    \end{equation}
    for all \(\mathrm{M}, \mathrm{N} \in \text{Sym}(n)\) with \(\mathrm{N} \geq 0\). In addition, there exists a constant \(\mathrm{c}_0 > 0\) such that
    \begin{equation}\label{eq:continuity}
        |F(\xi_1, \mathrm{M}) - F(\xi_2, \mathrm{M})| \leq \mathrm{c}_0\|\mathrm{M}\| \cdot \big||\xi_1|^{\alpha} - |\xi_2|^{\alpha} \big|
    \end{equation}
    for all \(\xi_1, \xi_2 \in \mathbb{R}^n \setminus \{0\}\) and \(\mathrm{M} \in \text{Sym}(n)\).
\end{itemize}

Additionally, they assume that \(\lambda^{\ast} > 0\), the first-order coefficient \(b\) is Lipschitz continuous, the source term \(f\) is bounded and continuous, and the boundary data \(\varphi\) is also Lipschitz continuous.

Under these assumptions, they proved that the unique solution to \eqref{Eq03BD} is Lipschitz continuous.

\medskip

Finally, in the manuscript \cite{daSLR21}, Da Silva \textit{et al.} investigated reaction-diffusion problems governed by the second-order fully nonlinear elliptic equation:
\begin{equation}\label{Eq-DC-GeneralFN}
F_0(x, \nabla u, D^2 u) + |\nabla u|^{\gamma} \langle \vec{b}(x), \nabla u \rangle = \lambda_0(x) u^{\mu} \chi_{\{u > 0\}}(x), \quad \text{in } \Omega,
\end{equation}
where \(\Omega \subset \mathbb{R}^n\) is a smooth, open, and bounded domain, \( g \geq 0 \), \( g \in \mathrm{C}^0(\partial \Omega) \), \(\vec{b} \in \mathrm{C}^0(\overline{\Omega}, \mathbb{R}^n)\), \( 0 \leq \mu < \gamma + 1 \) is the absorption exponent, \(0<\lambda_0 \in \mathrm{C}^0(\overline{\Omega})\), and \( F_0: \Omega \times (\mathbb{R}^n \setminus \{0\}) \times \text{Sym}(n) \to \mathbb{R} \) is a second-order fully nonlinear elliptic operator with measurable coefficients satisfying certain ellipticity and homogeneity conditions. Specifically,
\[
|\vec{\xi}|^{\gamma} \mathcal{M}^{-}_{\lambda, \Lambda}(\mathrm{X} - \mathrm{Y}) \leq F_0(x, \vec{\xi}, \mathrm{X}) - F_0(x, \vec{\xi}, \mathrm{Y}) \leq |\vec{\xi}|^{\gamma} \mathcal{M}^{+}_{\lambda, \Lambda}(\mathrm{X} - \mathrm{Y}) \quad (\text{with } \gamma > -1)
\]
for all \(\mathrm{X}, \mathrm{Y} \in \text{Sym}(n)\) with \(\mathrm{X} \geq \mathrm{Y}\).

The authors establish the sharp asymptotic behavior that characterizes the dead core sets of non-negative viscosity solutions to \eqref{Eq-DC-GeneralFN}. Furthermore, they derive a precise regularity estimate at free boundary points, namely \(\mathrm{C}_{\text{loc}}^{\frac{\gamma + 2}{\gamma + 1 - \mu}}\). They also determine the exact decay rate of the gradient at interior free boundary points and prove a sharp Liouville-type theorem.

\subsection*{Regularity estimates for quasilinear models}

For \( 1< p<  \infty \), consider the \( p \)-Laplace equation
\begin{equation}\label{Eq-p-Laplace}
  -\Delta_p u \defeq - \text{div} \left( |\nabla u|^{p-2} \nabla u \right) = 0 \quad \text{in} \quad \Omega,
\end{equation}
where \( \Omega \subset \mathbb{R}^n \) is a bounded open domain. Weak solutions to the quasilinear equation \eqref{Eq-p-Laplace} are commonly referred to as \( p \)-harmonic functions. Observe that for \( p > 2 \), equation \eqref{Eq-p-Laplace} is degenerate elliptic, whereas for \( 1 < p < 2 \), it is singular, since the ellipticity modulus \(\mathbf{\Phi}(\nabla u) = |\nabla u|^{p-2}\) degenerates (or blows up) at points where \( |\nabla u| = 0 \).

With respect to regularity estimates, in the late 1960s, Ural’tseva \cite{Ural68} proved that for \( p > 2 \), weak solutions of equation \eqref{Eq-p-Laplace} possess Hölder continuous first derivatives in the interior of \(\Omega\). In the early 1980s, Lewis \cite{Lewis83} and DiBenedetto \cite{DiBe} established analogous results valid for the singular case \( 1 < p < 2 \). However, in general, weak solutions exhibit no higher regularity than \( C_{\text{loc}}^{1, \alpha} \) for some unknown exponent \( \alpha \in (0, 1) \).

Beyond these foundational contributions, it is important to highlight the sharp regularity results in dimension \( n = 2 \). More precisely, Aronsson \cite{Arons89} and Iwaniec–Manfredi \cite{IwaManf89} showed that a \( p \)-harmonic function belongs to the spaces \( \mathrm{C}^{k,\alpha}_{\mathrm{loc}} \cap W^{2+k, q}_{\mathrm{loc}} \), where the integer \( k \geq 1 \) and the Hölder exponent \( \alpha \in (0, 1) \) are determined by the formula:
\begin{equation}\label{SharpExp}
k + \alpha^{\sharp}_p = \frac{1}{6} \left( 7 + \frac{1}{p - 1} + \sqrt{1 + \frac{14}{p - 1} + \frac{1}{(p - 1)^2}} \right), \quad \text{with}  \quad 1 \leq q(p) < \frac{2}{2-\alpha}.
\end{equation}

The proof crucially relies on techniques introduced in the foundational survey by Bojarski and Iwaniec \cite{BojIwa84}. A key element in their approach is the use of the hodograph transformation, which converts the nonlinear \( p \)-harmonic equation into a linear first-order elliptic system. This auxiliary system is then solved using Fourier series methods. A meticulous analysis of the Fourier expansion of the solution to the transformed system ultimately yields the desired regularity result. Notably, they demonstrate that the exponent given in \eqref{SharpExp} is optimal. For further developments closely related to this methodology, we refer the reader to the works of Manfredi \cite{Manf88} and Aronsson–Lindqvist \cite{AronLind88}, which build upon and complement the ideas presented in \cite{BojIwa84}.

\medskip

More recently, in the influential work \cite{ATU17}, the authors studied the \( p \)-Poisson equation
\begin{equation}\label{Eqp-Poisson}
  -\Delta_p u = f(x), \quad \text{in} \quad \mathrm{B}_1 \subseteq \mathbb{R}^2,
\end{equation}
for \( p > 2 \), assuming additionally that \( f \in L^\infty(B_1) \). Within this framework, they resolved a long-standing open problem: the proof of the \( \mathrm{C}^{p'} \)-regularity conjecture in the plane, where this regularity exponent is shown to be optimal.
\begin{theorem}[{\cite{ATU17}}]
Let \( \mathrm{B}_1 \subset \mathbb{R}^2 \), and let \( u \in W^{1,p}(\mathrm{B}_1) \) be a weak solution of
\[
-\Delta_p u = f(x), \quad p > 2,
\]
with \( f \in L^\infty(\mathrm{B}_1) \). Then, \( u \in \mathrm{C}^{p^{\prime}}(\mathrm{B}_{1/2}) \) and
\[
\|u\|_{\mathrm{C}^{p^{\prime}}(\mathrm{B}_{1/2})} \leq \mathrm{C}_p \left( \|u\|_{L^{\infty}(\mathrm{B}_1)} + \|f\|^{\frac{1}{p-1}}_{L^\infty(\mathrm{B}_1)}\right).
\]
\end{theorem}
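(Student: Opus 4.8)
The approach is geometric tangential analysis: compare $u$ at small scales with $p$-harmonic functions and exploit the sharp planar regularity of the latter recorded in \eqref{SharpExp}. First I would normalize. Setting $\mathrm{K}:=\|u\|_{L^{\infty}(\mathrm{B}_1)}+\gamma^{-1}\|f\|_{L^{\infty}(\mathrm{B}_1)}^{1/(p-1)}$ and $v:=u/\mathrm{K}$, a direct computation shows that $v$ is a weak solution of $-\Delta_p v=\tilde f$ in $\mathrm{B}_1$ with $\|v\|_{L^{\infty}(\mathrm{B}_1)}\le1$ and $\|\tilde f\|_{L^{\infty}(\mathrm{B}_1)}\le\gamma^{p-1}$, as small as desired. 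The bookkeeping underlying this, and the whole proof, is the rescaling $u_r(x):=r^{-p'}u(rx)$, which sends $-\Delta_p u=f$ to $-\Delta_p u_r=r^{p-p'(p-1)}f(rx)=f(rx)$ because $p'(p-1)=p$; thus the class $\{\,\|w\|_{L^\infty(\mathrm{B}_1)}\le1,\ \|\Delta_p w\|_{L^\infty(\mathrm{B}_1)}\le\varepsilon_0\,\}$ is preserved. This is precisely why $p'=p/(p-1)$ is the natural exponent and why $\|f\|_{L^\infty}^{1/(p-1)}$ enters the estimate. Hence it suffices to produce $\varepsilon_0>0$ and $\mathrm{C}>0$, depending only on $p$, such that if $\|u\|_{L^{\infty}(\mathrm{B}_1)}\le1$, $\|f\|_{L^{\infty}(\mathrm{B}_1)}\le\varepsilon_0$ and $x_0\in\mathrm{B}_{1/2}$ is a critical point of $u$, then $\sup_{\mathrm{B}_r(x_0)}|u-u(x_0)|\le\mathrm{C}\,r^{p'}$ for $r\in(0,1/4)$; translating, take $x_0=0$ and $u(0)=0$.

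\textbf{Approximation step.} For every $\delta>0$ there is $\varepsilon(\delta)>0$ so that any weak solution of $-\Delta_p u=f$ in $\mathrm{B}_1$ with $\|u\|_{L^{\infty}}\le1$ and $\|f\|_{L^{\infty}}\le\varepsilon(\delta)$ is $\delta$-close in $\mathrm{C}^1(\mathrm{B}_{1/2})$ to some $\mathfrak h\in\mathbb H_{2}^{(p)}$ with $\|\mathfrak h\|_{L^\infty(\mathrm{B}_{1/2})}\le1$. This is proved by contradiction: the interior a priori $\mathrm{C}^{1,\beta_0}_{\mathrm{loc}}$ estimates for the $p$-Poisson equation with bounded data (DiBenedetto--Tolksdorf--Lieberman) make a counterexample sequence precompact in $\mathrm{C}^1_{\mathrm{loc}}$, and stability of weak solutions identifies the limit as $p$-harmonic. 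Carrying out the compactness in $\mathrm{C}^1$, and not merely in $\mathrm{C}^0$, is essential so that the limit profile inherits the vanishing of the gradient at the critical point.

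\textbf{The iteration (dimension two).} By the Iwaniec--Manfredi identity \eqref{SharpExp}, an $\mathfrak h\in\mathbb H_2^{(p)}$ with $\|\mathfrak h\|_{L^\infty(\mathrm{B}_{1/2})}\le1$ belongs to $\mathrm{C}^{1,\alpha^{\sharp}_p}$ and, when $\nabla\mathfrak h(0)=0$, satisfies $\sup_{\mathrm{B}_\rho}|\mathfrak h-\mathfrak h(0)|\le\mathrm{C}^{\star}_p\,\rho^{1+\alpha^{\sharp}_p}$ with a universal $\mathrm{C}^{\star}_p$; writing $t=1/(p-1)$ and squaring, \eqref{SharpExp} yields $\alpha^{\sharp}_p>t$, i.e. $1+\alpha^{\sharp}_p>p'$, for every $p>2$ (equality only as $p\to2^+$). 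Pick $\rho\in(0,1/4)$ with $\mathrm{C}^{\star}_p\,\rho^{(1+\alpha^{\sharp}_p)-p'}\le\tfrac12$, set $\delta:=\tfrac12\rho^{p'}$, put $\varepsilon_0:=\varepsilon(\delta)$, and prove by induction that $\sup_{\mathrm{B}_{\rho^k}}|u|\le\rho^{kp'}$ for all $k\ge0$. The inductive step rescales $u_k:=u_{\rho^k}$, so $u_k$ still solves $-\Delta_p u_k=\tilde f_k$ with $\|\tilde f_k\|_{L^\infty(\mathrm{B}_1)}\le\varepsilon_0$, has $\|u_k\|_{L^\infty(\mathrm{B}_1)}\le1$, and keeps $u_k(0)=0$, $\nabla u_k(0)=0$; approximating $u_k$ in $\mathrm{C}^1(\mathrm{B}_{1/2})$ by a $p$-harmonic $\mathfrak h_k$ (which, by the structure of critical points of planar $p$-harmonic functions, can be taken to carry a genuine critical point matching that of $u_k$) and applying the decay gives $\sup_{\mathrm{B}_\rho}|u_k|\le\mathrm{C}^{\star}_p\rho^{1+\alpha^{\sharp}_p}+\delta\le\rho^{p'}$, i.e. the claim at level $k+1$. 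The resulting control $\sup_{\mathrm{B}_{\rho^k}(x_0)}|u-u(x_0)|\le\rho^{kp'}$ is, via a Campanato-type characterization, equivalent to $u$ being $\mathrm{C}^{1,1/(p-1)}$ at $x_0$ with $\sup_{\mathrm{B}_r(x_0)}|u-u(x_0)|\lesssim r^{p'}$. At points where $\nabla u\ne0$ the operator is uniformly elliptic and classical interior $\mathrm{C}^{1,\beta}$ theory applies; a dichotomy according to whether $\mathrm{B}_r(x)$ meets the critical set, plus a covering argument, upgrades this to the estimate on $\mathrm{B}_{1/2}$, and reversing the normalization restores the factor $\|u\|_{L^\infty(\mathrm{B}_1)}+\|f\|_{L^\infty(\mathrm{B}_1)}^{1/(p-1)}$.

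\textbf{Main obstacle.} The delicate point is the coupling between the approximation and the iteration: the favourable decay $\rho^{1+\alpha^{\sharp}_p}$ is available only at a \emph{critical} point of the tangent $p$-harmonic profile, so one must synchronize the vanishing of $\nabla u_k$ at $0$ with a vanishing of $\nabla\mathfrak h_k$; if only $|\nabla\mathfrak h_k(0)|\le\delta$ were known, the term $|\nabla\mathfrak h_k(0)|\rho\sim\delta\rho$ would dominate $\rho^{p'}$ once $k$ is large, since $p'<2$. This is exactly where planarity is used: the quasiregular structure of the complex gradient of a plane $p$-harmonic function (Bojarski--Iwaniec, Manfredi) forces its critical points to be isolated with finite vanishing order governed by $\alpha^{\sharp}_p$, which both supplies the universal constant $\mathrm{C}^{\star}_p$ and allows one to exhibit and exploit a genuine critical point of the approximant; there is no known counterpart in $n\ge3$, where the sharp exponent is unknown and the conjecture remains open. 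The remaining, routine but indispensable, ingredient is sharpness: the radial function $x\mapsto\mathrm{c}_p|x-x_0|^{p'}$ solves $-\Delta_p(\cdot)=\text{const}$ near $x_0$, so $p'$ cannot be improved and $\mathrm{C}^{p'}$ is attained.
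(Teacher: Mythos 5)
This theorem is quoted from \cite{ATU17}; the manuscript does not reprove it, so your proposal can only be compared with the known argument and with the analogous scheme the paper runs in Sections \ref{Section4}--\ref{Section5}. Your overall strategy --- normalization via the scaling \(u_r(x)=r^{-p^{\prime}}u(rx)\) (which preserves the equation since \(p^{\prime}(p-1)=p\)), a compactness/approximation lemma by \(p\)-harmonic profiles, a geometric iteration that closes because the planar exponent exceeds \(\tfrac{1}{p-1}\), and the radial barrier for sharpness --- is exactly the geometric tangential analysis of \cite{ATU17}, and your exponent bookkeeping is correct. Where you genuinely deviate is the architecture of the iteration: you run it only at critical points, where no affine correction is needed and the equation class is preserved exactly, and then recover the full \(\mathrm{C}^{1,\frac{1}{p-1}}(\mathrm{B}_{1/2})\) bound by a dichotomy between the degenerate and uniformly elliptic regimes plus a covering. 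The argument of \cite{ATU17} (and its counterpart here, Proposition \ref{Prop4.1}) instead iterates at \emph{every} point by subtracting affine functions, which is why one must prove the approximation lemma for the translated equation with a possibly unbounded vector \(\vec q\) --- precisely the extra work encoded in Lemmas \ref{Holderest}--\ref{approxlemma}. Your route buys a cleaner iteration at the cost of making the ``dichotomy plus covering'' step, which you dispatch in one sentence, carry real weight: near the critical set the ellipticity constants degenerate, so the alternative must be quantified at each scale (e.g.\ \(|\nabla u(x)|\gtrless M r^{1/(p-1)}\) on \(\mathrm{B}_r(x)\)) before uniformly elliptic theory can be invoked with controlled constants. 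This is standard but not free. Note also that the quantitative input the iteration needs is the Baernstein--Kovalev estimate of Proposition \ref{PropB-K} (exponent \(\tfrac{1}{p-1}+\tau_0\) with a universal constant), rather than the Iwaniec--Manfredi identity \eqref{SharpExp} per se, though either exponent suffices.

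One point in your write-up is wrong, although harmlessly so. In the ``main obstacle'' paragraph you assert that knowing only \(|\nabla\mathfrak h_k(0)|\le\delta\) would ruin the iteration, and you repair this by claiming the approximant ``can be taken to carry a genuine critical point matching that of \(u_k\)''; that claim is unjustified --- \(\mathrm{C}^1\)-closeness yields smallness of \(\nabla\mathfrak h_k(0)\), not its vanishing, and the quasiregularity of the complex gradient does not let you prescribe a zero of \(\nabla\mathfrak h_k\) at a given point. Fortunately the difficulty is illusory: with \(\delta\le c\,\rho^{p^{\prime}}\) the offending contribution is \(|\nabla\mathfrak h_k(0)|\,\rho\le\delta\rho\le c\,\rho^{p^{\prime}+1}\), which is lower order than \(\rho^{p^{\prime}}\), and since each \(u_k\) is renormalized to \(\|u_k\|_{L^{\infty}}\le1\) nothing accumulates in \(k\). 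You should therefore delete the unprovable matching-critical-point assertion and simply absorb the term \(\delta\rho\) when closing the induction; the \(\mathrm{C}^1\) approximation you already establish is all that is needed.
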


It is important to note that in the statement above, the constant \( p^{\prime} \) denotes the well-known H\"{o}lder conjugate of \( p \), i.e.,
\[
p + p^{\prime} = pp^{\prime} \quad \Leftrightarrow \quad p^{\prime} = \frac{1}{1 - \frac{1}{p}} = 1 + \frac{1}{p - 1}.
\]

A key element in \cite{ATU17} is the conditional result that the conjecture holds provided that \( p \)-harmonic functions—i.e., weak solutions to \( -\Delta_p \mathfrak{h} = 0 \)—are locally of class \( \mathrm{C}^{1, \alpha_p} \) for some \( \alpha_p > \frac{1}{p - 1} \). While this regularity remains an open and challenging problem in higher dimensions (cf. \cite{ATU18}), it is known to hold in the plane, thus yielding a complete resolution of the conjecture in two dimensions.

This key estimate follows from a seminal result by Baernstein II and Kovalev \cite{BaerKova05}, which leverages the fact that the complex-valued gradient \( \phi = \frac{\partial u}{\partial z} \) of a \( p \)-harmonic function in the plane is a \( \mathbf{K} \)-quasiregular gradient mapping, where \( \mathbf{K} = \mathbf{K}(p) = p - 1 \). The complex-valued gradient \( \phi = u_x - i u_y \in W^{1,2}(\mathrm{B}_1) \) satisfies the following first-order system in the hodograph (complex) plane:
\[
\phi_{\overline{z}} = \left( \frac{1}{p} - \frac{1}{2} \right) \left( \frac{\phi_z \overline{\phi}}{\phi} + \frac{\overline{\phi}_z \phi}{\overline{\phi}} \right), \quad \text{where} \quad z = x + i y.
\]
By invoking the classical Hölder regularity results for quasiregular mappings (see \cite[Section 5]{BaerKova05} and \cite{LL}), it follows that a weak solution to the \( p \)-Laplacian belongs to the class \( \mathrm{C}^{1, \alpha^{\ast}_p}_{\mathrm{loc}} \), with exponent
\[
\alpha^{\ast}_p = \frac{1}{2p} \left( -3 - \frac{1}{p - 1} + \sqrt{33 + \frac{30}{p - 1} + \frac{1}{(p - 1)^2}} \right).
\]
Moreover, the following estimate holds:
\[
[\phi]_{\mathrm{C}^{0, \alpha^{\ast}_p}(\mathrm{B}_{1/2})} \leq \mathrm{C}_p \|u\|_{L^{\infty}(\mathrm{B}_1)}.
\]

We conclude this subsection by summarizing the result above in the form of the following proposition.

\begin{proposition}[{\cite{ATU17} and \cite{BaerKova05}}]\label{PropB-K}
For any \( p > 2 \), there exists a constant \( 0 < \tau_0 < \frac{p-2}{p-1} \) such that \( p \)-harmonic functions in \( \mathrm{B}_1 \subset \mathbb{R}^2 \) are locally of class \( \mathrm{C}^{p^{\prime} + \tau_0} \). Furthermore, if \( u \in W^{1,p}(\mathrm{B}_1) \cap \mathrm{C}^0(\mathrm{B}_1) \) is \( p \)-harmonic in the unit disk \( \mathrm{B}_1 \subset \mathbb{R}^2 \), then there exists a constant \( \mathrm{C}_p > 0 \), depending only on \( p \), such that
\[
[\nabla u]_{\mathrm{C}^{0, \frac{1}{p-1} + \tau_0}(\mathrm{B}_{1/2})} \leq \mathrm{C}_p \| u \|_{L^\infty(\mathrm{B}_1)}.
\]
\end{proposition}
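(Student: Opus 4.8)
The plan is to obtain the statement as a consequence of the quasiregularity of the complex gradient, due to Baernstein II and Kovalev, combined with the sharp Hölder regularity theory for planar quasiregular mappings, and then to read off $\tau_0$ by an elementary algebraic check. First I would recall that, for a $p$-harmonic $u$ in a planar disk, the complex gradient $\phi = u_x - i u_y$ lies in $W^{1,2}_{\mathrm{loc}}$ and solves the Beltrami-type system
\[
\phi_{\overline z} = \left(\tfrac1p - \tfrac12\right)\left(\frac{\phi_z\,\overline\phi}{\phi} + \frac{\overline\phi_z\,\phi}{\overline\phi}\right),
\]
so that $\phi$ is a $\mathbf{K}$-quasiregular \emph{gradient} mapping with $\mathbf{K} = p-1$. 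The generic Hölder estimate for $\mathbf{K}$-quasiregular maps only yields $\phi \in \mathrm{C}^{0,1/\mathbf{K}}_{\mathrm{loc}} = \mathrm{C}^{0,\frac{1}{p-1}}_{\mathrm{loc}}$, i.e. $u \in \mathrm{C}^{p^{\prime}}_{\mathrm{loc}}$; the point is that the \emph{gradient} structure of $\phi$ improves this to $\phi \in \mathrm{C}^{0,\alpha^\ast_p}_{\mathrm{loc}}(\mathrm{B}_1)$ with the explicit exponent $\alpha^\ast_p$ quoted above and a scale-invariant interior bound $[\phi]_{\mathrm{C}^{0,\alpha^\ast_p}(\mathrm{B}_{1/2})} \le \mathrm{C}_p \|\phi\|_{L^2(\mathrm{B}_{3/4})}$. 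Combining this with the Caccioppoli-type gradient bound $\|\nabla u\|_{L^2(\mathrm{B}_{3/4})} \le \mathrm{C}_p \|u\|_{L^\infty(\mathrm{B}_1)}$ for $p$-harmonic functions gives $[\nabla u]_{\mathrm{C}^{0,\alpha^\ast_p}(\mathrm{B}_{1/2})} \le \mathrm{C}_p\|u\|_{L^\infty(\mathrm{B}_1)}$, and invariance of the equation under $u \mapsto u(x_0 + r\,\cdot\,)$ and under adding constants upgrades this to the general interior statement via translation, scaling, and a covering argument.

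It then remains to check that $\frac{1}{p-1} < \alpha^\ast_p < 1$ for all $p > 2$, which is exactly what lets me set $\tau_0 := \alpha^\ast_p - \frac{1}{p-1}$ and conclude $0 < \tau_0 < \frac{p-2}{p-1}$ (using $\frac{1}{p-1} + \frac{p-2}{p-1} = 1$), so that $\mathrm{C}^{1,\alpha^\ast_p} = \mathrm{C}^{p^{\prime}+\tau_0}$. Writing $t := \frac{1}{p-1} \in (0,1)$ and noting $2p = \frac{2(t+1)}{t}$, the inequality $\alpha^\ast_p > t$ reduces, after clearing denominators, to $\sqrt{33 + 30t + t^2} > 3t + 5$, i.e. $33 + t^2 > 9t^2 + 25$, i.e. $t^2 < 1$; similarly $\alpha^\ast_p < 1$ reduces to $\sqrt{33 + 30t + t^2} < 5 + t + \frac{2}{t}$, i.e. $4 + 20t < \frac{4 + 20t}{t^2}$, again $t^2 < 1$. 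Both hold since $p > 2$ forces $t \in (0,1)$.

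I expect the one genuinely substantial ingredient to be the Baernstein II–Kovalev theorem itself—that the complex gradient of a planar $p$-harmonic function is a quasiregular gradient mapping enjoying the sharp Hölder exponent $\alpha^\ast_p$—which I would invoke as a black box rather than reprove. The remaining steps are routine: extracting a constant $\mathrm{C}_p$ depending only on $p$ from the interior estimate for quasiregular mappings, verifying scale invariance, and performing the elementary algebra above. In short, the proof is a matter of assembling the cited results and carrying out the clean computation that pins down $\tau_0$.
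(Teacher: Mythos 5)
Your proposal matches the paper's treatment: Proposition \ref{PropB-K} is presented there precisely as a packaging of the Baernstein II--Kovalev theorem on quasiregular gradient mappings (together with \cite{ATU17} and \cite{LL}), which is exactly the black box you invoke, and your elementary check that $\tfrac{1}{p-1}<\alpha^{\ast}_p<1$ --- so that $\tau_0:=\alpha^{\ast}_p-\tfrac{1}{p-1}\in\bigl(0,\tfrac{p-2}{p-1}\bigr)$ --- correctly supplies the inequality the paper asserts without computation. No gaps.
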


In conclusion, it is important to emphasize the following inequality:
\[
\alpha_p^{\sharp} > \alpha^{\ast}_p > \frac{1}{p-1} \quad \text{for any} \quad p > 2.
\]

\subsection*{Higher estimates for H\'{e}non-type problems with strong absorption}

Recently, Teixeira in \cite{Tei22} derived regularity estimates for interior stationary points of solutions to $p$-degenerate elliptic equations in inhomogeneous media:

\[
\mathrm{div}\,\mathfrak{a}(x, \nabla u) = f(x, u, \nabla u), \quad \text{with} \quad  |f(x, s, \xi)| \lesssim \mathrm{c}_0 |x|^{\alpha} |s|^{m} \min\{1, |\xi|^{\kappa}\},
\]
for some $\mathrm{c}_0 > 0$, $\alpha, \kappa \geq 0$, and $0 \leq m < p - 1$. In this setting, the vector field $\mathfrak{a}: \mathrm{B}_1 \times \mathbb{R}^n \to \mathbb{R}^n$ is continuously differentiable in the gradient variable and satisfies the following structural conditions:

\begin{equation} \label{condestr}
\left\{
\begin{array}{rclcl}
|\mathfrak{a}(x,\xi)| + |\partial_{\xi}\mathfrak{a}(x,\xi)||\xi| & \leq & \Lambda |\xi|^{p-1}, & & \\
\lambda |\xi|^{p-2}|\eta|^2 & \leq & \langle \partial_{\xi}\mathfrak{a}(x,\xi)\eta, \eta \rangle, & & \\
\displaystyle \sup_{\genfrac{}{}{0pt}{}{x, y \in \mathrm{B}_1}{x \ne y, \,\,\,|\xi| \ne 0 }} \frac{|\mathfrak{a}(x,\xi)-\mathfrak{a}(y,\xi)|}{\omega(|x-y|)|\xi|^{p-1}} & \leq & \mathfrak{L}_0 < \infty, & &
\end{array}
\right.
\end{equation}
for $p \geq 2$, with constants $\lambda, \Lambda, \mathfrak{L}_0 > 0$ and a modulus of continuity $\omega: [0, \infty) \to [0, \infty)$ satisfying
\[
\int_{0}^{2} \frac{\omega^{\frac{2}{p}}(t)}{t^{1+\gamma}}\,dt < \infty \quad \text{for some } \gamma \in [0, 1).
\]

In this context, Teixeira established a quantitative non-degeneracy estimate, demonstrating that solutions cannot be smoother than $\mathrm{C}^{p'}$ at stationary points (see \cite[Proposition 5]{Tei22}). Moreover, at critical points where the source term vanishes, sharp higher-order regularity estimates are obtained, reflecting the vanishing rate of the source (see \cite[Theorem 3]{Tei22}).
\medskip

Subsequently, we consider quasilinear elliptic equations of the form:
\begin{equation}\label{EqMatukuma}
\mathrm{div}\left(\mathfrak{a}(|x|) |\nabla u|^{p-2} \nabla u\right) = \mathfrak{h}(|x|) f(u) \quad \text{in } \quad \Omega \subset \mathbb{R}^n, \quad  p > 1,
\end{equation}
where $\mathfrak{a}, \mathfrak{h} : \mathbb{R}^+ \to \mathbb{R}^+$ are radial profiles, with $\mathfrak{a} \in \mathrm{C}^1(\mathbb{R}^+)$ and $\mathfrak{h} \in \mathrm{C}^0(\mathbb{R}^+)$. A notable example is the celebrated Matukuma equation (also known as the Batt–Faltenbacher–Horst equation; see \cite{BFH86}), which serves as a prototype for \eqref{EqMatukuma}.

The function \(f\) is assumed to satisfy the following structural conditions:
\begin{itemize}
    \item[\textbf{(F1)}] $f \in \mathrm{C}^0(\mathbb{R})$;
    \item[\textbf{(F2)}] $f$ is non-decreasing on $\mathbb{R}$, and $f(t) > 0$ if and only if $t > 0$.
\end{itemize}

As a specific example, consider the model equation:
\begin{equation}\label{ModelEq}
    \mathrm{div} \left( |x|^k | \nabla u|^{p-2} \nabla u \right) = |x|^{\alpha} f(u),
\end{equation}
where $f(u) = u_{+}^{m}$ and the parameters satisfy $m + 1 < k - \alpha < p$.

In this setting, da Silva \textit{et al.} \cite{daSdosPRS} established the following sharp estimate for weak solutions to \eqref{ModelEq}:
\[
\sup_{x \in \mathrm{B}_r(x_0)} u(x) \leq \mathrm{C} r^{1 + \frac{1 + \alpha + m - k}{p - 1 - m}},
\]
where $\mathrm{C} > 0$ is a universal constant, $x_0 \in \mathrm{B}_1$ is a free boundary point of \( u \), and the nonlinearity satisfies $f(|x|, u) \lesssim |x - x_0|^{\alpha} u_+^m$, with $\alpha + 1 + m > k$.
\medskip

Recently, Bezerra J\'{u}nior \textit{et al.} \cite{BeJDaSNS2024} established sharp and improved regularity estimates for non-negative viscosity solutions of H\'{e}non-type elliptic equations governed by the infinity-Laplacian under a strong absorption condition:
\begin{equation}\label{pobst}
\Delta_{\infty} u(x) = f(|x|, u(x)) \quad \text{in} \quad \mathrm{B}_1,
\end{equation}
where, for all $(x, t) \in \mathrm{B}_1 \times \mathfrak{I}$ (with $\mathfrak{I} \subset \mathbb{R}$ an interval) and $r, s \in (0, 1)$, the authors assume the existence of a universal constant $\mathrm{c}_n > 0$ and a function $f_0 \in L^{\infty}(\mathrm{B}_1)$ such that
\begin{equation}\label{EqHomog-f}
|f(r|x|, s t)| \leq \mathrm{c}_n r^{\alpha} s^m |f_0(x)| \quad \text{for} \quad 0 \leq m < 3 \quad  \text{and} \quad \alpha \in \left[0, \infty\right).
\end{equation}
A representative toy model for \eqref{pobst} is the H\'{e}non-type equation with strong absorption:
\[
\Delta_{\infty} u(x) = c_0 |x|^{\alpha} u_{+}^{m}(x) \quad \text{in} \quad \mathrm{B}_1 \quad \text{with} \quad c_0>0.
\]

In this context, the authors derived the following sharp regularity estimate along the free boundary:
\[
u(x) \leq \mathrm{C}\cdot \|u\|_{L^{\infty}(\mathrm{B}_1)}|x - x_0|^{\frac{4 + \alpha}{3 - m}},
\]
where $\mathrm{C} > 0$ depends only on universal parameters.

Additionally, they established the following non-degeneracy estimate:
\[
\sup_{\partial \mathrm{B}_r(x_0)} u(x) \geq \left(\frac{(3 - m)^4}{(4 + \alpha)^3(1 + \alpha + m)}\right)^{\frac{1}{3 - m}} \cdot r^{\frac{4 + \alpha}{3 - m}}.
\]

Moreover, our manuscript is motivated by \cite[Theorem 1]{NSST23}, in which the authors establish superior regularity properties for solutions to fully nonlinear elliptic models of the form
\[
F(x, D^2u) = f(x, u, Du) \lesssim q(x)|u|^m \min \{1, |Du|^{\gamma}\}, \quad (m, \gamma \geq 0),
\]
at interior critical points, where $q \in L^p(\mathrm{B}_1)$ is a non-negative function and $p > n$. The key innovation of these estimates lies in their capacity to confer smoothness properties that surpass the intrinsic regularity constraints imposed by the heterogeneity of the problem.

\medskip

Recently, in \cite{ADaSS25}, Alcantara \textit{et al.} investigated the regularity properties of solutions to a class of quasilinear elliptic partial differential equations (PDEs) characterized by strong absorption and a non-divergence structure. Specifically, the authors consider equations of the form:
\[
|\nabla u(x)|^{\gamma} \Delta_p^{\mathrm{N}} u(x) = f(x, u) \quad \text{in} \quad B_1,
\]
where \( \gamma > -1 \), \( p \in (1, \infty) \), and \( f(x, u) \lesssim \mathfrak{a}(x) u_{+}^m \), with \( m \in [0, \gamma + 1) \). 

This setting accommodates the existence of "plateau regions," i.e., subsets where the non-negative solution \( u \) vanishes identically. The authors derive refined geometric regularity estimates in the space $\mathrm{C}_{\text{loc}}^{\frac{\gamma+2}{\gamma+1-m}}$ along the free boundary $\partial \{u>0\}\cap B_{1/2}$. In addition, they establish non-degeneracy results and explore measure-theoretic aspects of the solutions, offering deeper insights into the geometric structure of the free boundary. A sharp Liouville-type theorem is also proven for entire solutions exhibiting controlled growth at infinity, contributing to the broader understanding of the global behavior of such solutions. This work extends the regularity theory for quasilinear elliptic equations by addressing the case of strong absorption in the non-divergence setting, with particular attention to behavior near the free boundary. The results have significant implications for the qualitative analysis of solutions to such PDEs and enrich the theoretical framework surrounding nonlinear elliptic problems with absorption terms.

\medskip

Below, we summarize the sharp regularity estimates available in the literature for problems closely related to \eqref{Eq-Henon-type}:

\begin{table}[h]
\centering
\resizebox{\textwidth}{!}{
 \begin{tabular}{c|c|c|c}
{\bf Model PDE} & {\bf Structural assumptions} & {\bf Sharp regularity estimates} & \textbf{References} \\
\hline
$\mathrm{div} \left( |x|^k | \nabla u|^{p-2} \nabla u \right) = |x|^{\alpha}u_+^m$ & $0 \leq m < p-1$, \,\, $\alpha + 1 + m > k$, \,\, $p > 1$ & $C_\mathrm{loc}^{\frac{p+\alpha-k}{p-1-m}}$ & \cite{daSdosPRS} \\
\hline
$\Delta_p u = f(x, u) \lesssim \mathrm{c}_0|x|^{\alpha}|u|^{m}\min\{1, |\nabla u|^{\kappa}\}$ & $\alpha, \kappa \geq 0$, \,\, $0 \leq m < p-1$ (for $p > 2$) & $C_\mathrm{loc}^{1, \min\left\{\alpha_\mathrm{H}, \frac{\alpha+1+\gamma\kappa}{p-1-m}\right\}^{-}}$ & \cite{Tei22} \\
\hline
$ F(x, D^2u) = f(x, u, Du) \lesssim q(x)|u|^m \min \{1, |Du|^{\gamma}\}$ & $F$ \text{ uniformly elliptic},\,\,\,$g \in L^p(B_1)$, \,\,$p>n$,\,\,$m, \gamma\geq 0$ & $C_
{\text{loc}}^{1 + \epsilon_{m, n, p, \gamma}}$ & \cite{NSST23} \\
\hline
$ \Delta_{\infty} u(x) = \mathcal{G}(x, u) \lesssim |x|^{\alpha} u_{+}^m $ & $\alpha\geq 0 \,\,\,\text{and}\,\,\, 0\leq m<3$  & $C_
{\text{loc}}^{\frac{4+\alpha}{3-m}}$ & \cite{BeJDaSNS2024}
\\
\hline
$ |\nabla u|^{\gamma}\Delta^{\mathrm{N}}_{p} u(x) = f(x, u) \lesssim |x|^{\alpha} u_{+}^m $ & $\alpha\geq 0$, $0 \leq m<\gamma+1$,\,\, $p \in (1, \infty)$ & $C_
{\text{loc}}^{\frac{\gamma+2+\alpha}{\gamma+1-m}}$ & \cite{ADaSS25}
\\
\hline
$ |\nabla u|^{\theta}\Delta^{\mathrm{N}}_{p} u(x) + \mathscr{H}(\nabla u, x)= f(x, u) \lesssim |x|^{\alpha} u_{+}^m $ & $(\mathbf{H1})-(\mathbf{H4})$, $0 \leq m<\theta+1$,\,\, $\alpha \geq 0$ & $C_
{\text{loc}}^{\frac{\theta+2+\alpha}{\theta+1-m}}$ & Theorem \ref{Higher_continuity}
\end{tabular}}
\caption{Sharp regularity estimates for elliptic problems related to \eqref{Eq-Henon-type}.}
\end{table}

This work aims to improve and generalize some of these results by employing novel strategies and analytical techniques.


\section{Preliminaries}

In this section, we introduce the notion of viscosity solutions, which will serve as the analytical framework for our study. We also present some auxiliary results essential for the development of our main arguments. 

Throughout this work, we denote by \( B_r = B_r(0) \) the open ball of radius \( r \) centered at the origin.

We begin by defining the H\"older semi-norms. Given \( \alpha \in (0, 1] \) and \( \Omega \subset \mathbb{R}^n \), we set
\[
[u]_{\mathrm{C}^{0, \alpha}( \Omega)} = \sup_{\substack{x, y \in \Omega \\ x \neq y}} \frac{|u(x) - u(y)|}{|x - y|^{\alpha}},
\]
and
\[
[u]_{\mathrm{C}^{1,\alpha}(\Omega)} = \sup_{\genfrac{}{}{0pt}{}{x\in\Omega}{\rho>0}} \inf_{\genfrac{}{}{0pt}{}{\vec{q}\in\mathbb{R}^{n}}{c \in \mathbb{R}}} \sup_{z \in \mathrm{B}_\rho(x) \cap \Omega} \frac{\left| u(z) - \vec{q} \cdot z - c \right|}{\rho^{1+\alpha} }.
\]

The normalized \( p \)-Laplacian operator, except when \( p = 2 \), is not well-defined at points where \( |\nabla u| = 0 \), as it becomes discontinuous—even if \( u \) is smooth. This issue can be resolved by employing the framework of viscosity solutions (see \cite{Attou18}), using upper and lower semicontinuous envelopes, which serve as a relaxed notion of the operator.

Given a symmetric matrix \( \mathrm{X} \in \mathrm{Sym}(n) \), we denote its maximal and minimal eigenvalues by
\[
\lambda_{\max}(\mathrm{X}) \defeq \max_{\genfrac{}{}{0pt}{}{\xi \in \mathbb{R}^n}{\|\xi\|=1}} \langle \mathrm{X}\xi, \xi \rangle \quad \text{and} \quad \lambda_{\min}(\mathrm{X}) \defeq \min_{\genfrac{}{}{0pt}{}{\xi \in \mathbb{R}^n}{\|\xi\|=1}} \langle \mathrm{X}\xi, \xi \rangle.
\]

We now define viscosity solutions.

\begin{definition}[{\bf Viscosity Solutions}]\label{DefViscSol}
An upper semicontinuous function \( u \in \mathrm{C}^0(\Omega) \) is called a viscosity subsolution (respectively, supersolution) of
\begin{equation}\label{Vis1}
     |\nabla u|^{\theta}\Delta_{p}^{\mathrm{N}} u + \mathscr{H}(\nabla u, x) = f_0(x, u(x)), \quad \text{with} \quad f_0 \in \mathrm{C}^0(\Omega \times \mathbb{R}_+),
\end{equation}
if, whenever \( \phi \in \mathrm{C}^{2}(\Omega) \) and \( u - \phi \) attains a local maximum (respectively, minimum) at \( x_{0} \in \Omega \), the following conditions hold:

\begin{enumerate}
    \item If \( |\nabla \phi(x_{0})| \neq 0 \), then
    \[
      |\nabla \phi(x_0)|^{\theta}\Delta_{p}^{\mathrm{N}} \phi(x_{0}) + \mathscr{H}(\nabla \phi(x_0), x_0) \geq f_0\big(x_{0}, \phi(x_0)\big)
    \]
    \[
      \text{(respectively, }\, |\nabla \phi(x_0)|^{\theta}\Delta_{p}^{\mathrm{N}} \phi(x_{0}) + \mathscr{H}(\nabla \phi(x_0), x_0) \leq f_0\big(x_{0}, \phi(x_0)\big)\text{)}.
    \]

    \item If \( |\nabla \phi(x_{0})| = 0 \), then:
    \begin{enumerate}
        \item For \( p \geq 2 \),
        \[
        \Delta \phi(x_{0}) + (p - 2) \lambda_{\max}\big(D^{2} \phi(x_{0})\big) \geq f_0\big(x_{0}, \phi(x_0)\big)
        \]
        \[
        \text{(respectively, }\, \Delta \phi(x_{0}) + (p - 2) \lambda_{\min}\big(D^{2} \phi(x_{0})\big) \leq f_0\big(x_{0}, \phi(x_0)\big)\text{)}.
        \]
        
        \item For \( 1 < p < 2 \),
        \[
        \Delta \phi(x_{0}) + (p - 2) \lambda_{\min}\big(D^{2} \phi(x_{0})\big) \geq f_0\big(x_{0}, \phi(x_0)\big)
        \]
        \[
        \text{(respectively, }\, \Delta \phi(x_{0}) + (p - 2) \lambda_{\max}\big(D^{2} \phi(x_{0})\big) \leq f_0\big(x_{0}, \phi(x_0)\big)\text{)}.
        \]
    \end{enumerate}
\end{enumerate}

Finally, we say that \( u \) is a viscosity solution to \eqref{Vis1} if it satisfies the conditions for both subsolutions and supersolutions.
\end{definition}

The next lemma plays a key role in the upcoming section concerning the demonstration of the approximation scheme (see Lemma \ref{approxlemma}). It can be regarded as a cutting lemma.

\begin{lemma}[{\bf Cutting Lemma, \cite[Lemma 2.6]{Attou18}}]\label{CT-Lemma}
Let \( \theta > -1 \) and \( p > 1 \). Assume that \( w \) is a viscosity solution of  
\[
- |D w + \vec{q}|^{\theta} \left(\Delta w - (p - 2) \frac{ D^2 w (D w + \vec{q}), (D w + \vec{q}) }{|D w + \vec{q}|^2}\right) = 0.
\]
Then \( w \) is a viscosity solution of  
\[
-\Delta^{\mathrm{N}}_{p} (w + \vec{q}\cdot x) = - \Delta w - (p - 2) \frac{ D^2 w (D w + \vec{q}), (D w + \vec{q}) }{|D w + \vec{q}|^2} = 0.
\]
\end{lemma}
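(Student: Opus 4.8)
\textbf{Proof proposal for Lemma \ref{CT-Lemma} (Cutting Lemma).}

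The plan is to exploit the fact that the two operators appearing in the statement differ only in how they are ``completed'' at points where $Dw + \vec q = 0$, while they coincide wherever $Dw + \vec q \neq 0$. Write $\vec p = Dw + \vec q$ and note that, away from the singular set $\{\vec p = 0\}$, the identity
\[
|\vec p|^{\theta}\left(\Delta w + (p-2)\left\langle D^2 w \tfrac{\vec p}{|\vec p|}, \tfrac{\vec p}{|\vec p|}\right\rangle\right) = |\vec p|^{\theta}\Delta_p^{\mathrm N}(w + \vec q\cdot x)
\]
holds in the classical sense for $\mathrm C^2$ test functions. Hence the only thing to check is the viscosity inequalities at a point $x_0$ where a test function $\phi$ touches $w$ from above (subsolution case) or below (supersolution case) and $D\phi(x_0) + \vec q = 0$, i.e. $D\phi(x_0) = -\vec q$. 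First I would reduce to the subsolution case; the supersolution case is symmetric (replace $w$ by $-w$, adjusting signs of $\vec q$ and using the odd symmetry of the equations in the statement, both of which are homogeneous with zero right-hand side).

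The core of the argument is a standard perturbation-of-test-function trick at the singular point. Suppose $w - \phi$ has a local maximum at $x_0$ with $D\phi(x_0) = -\vec q$. For the first (degenerate-elliptic, trace-form) equation one has to show $\Delta\phi(x_0) + (p-2)\lambda_{\max}(D^2\phi(x_0)) \ge 0$ when $p\ge 2$ (and with $\lambda_{\min}$ when $1<p<2$), \emph{using} that $w$ is a viscosity solution of the first equation. But $w$ being a viscosity solution of the first equation \emph{already} forces exactly this inequality at $x_0$, because at a point where the ``gradient plus $\vec q$'' vanishes the first equation's subsolution test is precisely the relaxed test with $\lambda_{\max}$ (for $p\ge2$) — this is built into the definition of viscosity solution for degenerate operators of normalized $p$-Laplace type (cf. Definition \ref{DefViscSol}). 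So the singular-point inequality transfers verbatim. At non-singular points, if $D\phi(x_0) + \vec q \neq 0$, both equations reduce to the same classical expression evaluated at $(D\phi(x_0), D^2\phi(x_0))$, so the inequality for one is literally the inequality for the other. Combining the two cases gives that $w$ is a viscosity subsolution of $-\Delta_p^{\mathrm N}(w+\vec q\cdot x) = 0$; symmetrically it is a supersolution, hence a solution.

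More carefully, the subtle point is that ``$w$ is a viscosity solution of the first equation'' must be interpreted with the correct relaxed (semicontinuous-envelope) operator, so that the test at $\{\vec p = 0\}$ involves $\lambda_{\max}/\lambda_{\min}$ exactly as in Definition \ref{DefViscSol}; once this is fixed, the translated normalized $p$-Laplacian $-\Delta_p^{\mathrm N}(w + \vec q\cdot x)$ has, by construction, the \emph{same} relaxed operator at its own singular set $\{D w + \vec q = 0\} = \{\vec p = 0\}$, because $\Delta_p^{\mathrm N}(w + \vec q\cdot x)$ depends on $Dw$ only through $Dw + \vec q$. Thus the two viscosity notions coincide point-by-point on test functions, and the equivalence is immediate. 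I would present this as: (i) recall the relaxed operators and the definition of viscosity sub/supersolution for both equations; (ii) observe they agree off $\{\vec p = 0\}$; (iii) observe they agree on $\{\vec p = 0\}$ by matching the $\lambda_{\max}$/$\lambda_{\min}$ completions; (iv) conclude.

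\textbf{Main obstacle.} The only genuine subtlety is bookkeeping the $\mathrm C^2$-test-function admissibility and the semicontinuous envelopes at the singular set $\{Dw + \vec q = 0\}$, i.e.\ making sure that ``viscosity solution'' for the non-normalized trace-form equation in the hypothesis is taken with precisely the relaxed operator that matches the normalized $p$-Laplacian's own relaxation; there is no hard estimate, only a careful matching of definitions (this is where the hypotheses $\theta > -1$, $p>1$ enter, guaranteeing the factor $|\vec p|^{\theta}$ neither destroys nor creates spurious sign information at $\{\vec p=0\}$).
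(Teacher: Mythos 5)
First, note that the paper does not prove this lemma at all: it is imported verbatim from \cite[Lemma 2.6]{Attou18}, whose proof in turn rests on the perturbation argument of Imbert--Silvestre \cite[Lemma 6]{IS13}. So the comparison is really between your sketch and that standard argument.

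Your proposal has a genuine gap at exactly the point the lemma is designed to address. At a touching point $x_0$ with $D\phi(x_0)+\vec q=0$, you assert that the viscosity test for the first equation ``already forces exactly'' the relaxed inequality $\Delta\phi(x_0)+(p-2)\lambda_{\max}(D^2\phi(x_0))\ge 0$, because this is ``built into the definition.'' It is not. The operator in the hypothesis carries the factor $|D\phi+\vec q|^{\theta}$, and for $\theta>0$ its semicontinuous envelopes at the singular set $\{D\phi+\vec q=0\}$ vanish identically: as $\xi\to-\vec q$ the factor $|\xi+\vec q|^{\theta}$ annihilates the (bounded) second-order expression, so the relaxed sub/supersolution tests there read $0\ge 0$ and $0\le 0$ and carry \emph{no} information about $D^2\phi(x_0)$. (For $\theta\in(-1,0)$ the factor instead blows up, which is equally problematic for a ``matching of definitions.'') By contrast, the conclusion $-\Delta_p^{\mathrm N}(w+\vec q\cdot x)=0$ does impose the nontrivial $\lambda_{\max}/\lambda_{\min}$ conditions at those points. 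Your step (iii), ``observe they agree on $\{\vec p=0\}$,'' is therefore false as stated, and your closing remark that ``there is no hard estimate, only a careful matching of definitions'' is precisely where the proof is missing.

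The actual argument recovers the singular-point inequality from the equation at \emph{nearby nonsingular} points: one replaces $\phi$ by $\phi+\langle e,x\rangle$ for small $e$, locates the new touching points $x_e\to x_0$, and splits into two cases. If along some sequence $e_k\to 0$ one has $D\phi(x_{e_k})+e_k+\vec q\neq 0$, the equation applies classically there (where the factor $|\cdot|^{\theta}>0$ can be divided out, as in your nonsingular case) and one passes to the limit. If instead $D\phi(x_e)+e+\vec q=0$ for \emph{all} small $e$, a Taylor expansion of $D\phi$ together with the touching inequality forces the required sign condition on $D^2\phi(x_0)$ directly. Your treatment of nonsingular touching points and the sub/supersolution symmetry are fine; what is missing is this sliding argument, which is the entire content of the Cutting Lemma.
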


Moreover, for homogeneous problems, we have the following regularity result.

\begin{lemma}[{\bf \cite[Lemma 3.4]{Attou18}}]\label{Holder-Cont-hom}
Let \( p \in (1, \infty) \). Assume that \( \vec{q} \in \mathbb{R}^{n} \), and let \( w \) be a normalized viscosity solution to the problem
\[
\Delta w - (p - 2) \frac{ D^2 w (D w + \vec{q}), (D w + \vec{q}) }{|D w + \vec{q}|^2} = 0 \quad \text{in } \mathrm{B}_{1}.
\]
Then, for all \( r \in (0, 1/2] \), there exist \( \alpha^{\prime} \in (0, 1) \) and a constant \( \mathrm{C} = \mathrm{C}(p, n, \alpha^{\prime}) > 0 \) such that
\[
[w]_{\mathrm{C}^{1,\alpha^{\prime}}(\mathrm{B}_{r})} \leq \mathrm{C}.
\]
\end{lemma}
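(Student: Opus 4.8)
The plan is to read the equation as a uniformly elliptic fully nonlinear equation in which the \emph{frozen} gradient direction $\frac{Dw+\vec q}{|Dw+\vec q|}$ plays the role of a coefficient, and to extract the gradient H\"{o}lder estimate by a version of the Ishii--Lions technique, after reducing the general situation to two regimes according to the size of $|\vec q|$. We may assume $w$ is normalized so that $\|w\|_{L^\infty(\mathrm B_1)}\le 1$. The first observation is that the operator
\[
w\longmapsto \Delta w+(p-2)\left\langle D^2w\,\frac{Dw+\vec q}{|Dw+\vec q|},\frac{Dw+\vec q}{|Dw+\vec q|}\right\rangle
\]
is uniformly elliptic with ellipticity constants $\lambda=\min\{1,p-1\}$ and $\Lambda=\max\{1,p-1\}$ that are independent of both $\vec q$ and $Dw$; in particular $w$ lies in the Pucci class $S(\lambda,\Lambda,0)$. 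Hence Krylov--Safonov (equivalently, the argument behind Lemma \ref{Holderest}) produces a $\vec q$-independent interior bound $[w]_{\mathrm{C}^{0,\beta}(\mathrm B_{7/8})}\le \mathrm{C}(n,p)$, and a first run of the Ishii--Lions doubling argument with a concave penalization $\varphi(s)=s-\omega_0 s^{1+\tau}$ upgrades it to a uniform interior Lipschitz estimate $\mathrm{Lip}(w;\mathrm B_{3/4})\le \mathrm{C}(n,p)$.

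With $L:=\mathrm{Lip}(w;\mathrm B_{3/4})\le \mathrm{C}(n,p)$ at hand, I would split into two cases. If $|\vec q|\ge \mathrm{M}_0 L$ for a large universal $\mathrm{M}_0$, then $|Dw+\vec q|\ge (1-\mathrm{M}_0^{-1})|\vec q|>0$ on $\mathrm B_{3/4}$ and the projection $\frac{Dw+\vec q}{|Dw+\vec q|}$ remains within $\mathrm{C}\,\mathrm{M}_0^{-1}$ of the fixed unit vector $\widehat{\vec q}$; thus $w$ solves a uniformly elliptic equation that is a small perturbation of the constant-coefficient operator $\mathrm{tr}\big((\mathrm{Id}_n+(p-2)\widehat{\vec q}\otimes\widehat{\vec q})D^2\cdot\big)$, and Caffarelli's perturbative $\mathrm{C}^{1,\alpha}$ theory (or a Schauder bootstrap for the smooth non-degenerate equation) gives $[w]_{\mathrm{C}^{1,\alpha}(\mathrm B_{1/2})}\le \mathrm{C}(n,p)$ once $\mathrm{M}_0$ is taken large enough. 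Equivalently, in this regime Lemma \ref{CT-Lemma} identifies $v:=w+\vec q\cdot x$ with a $\Delta_p^{\mathrm N}$-harmonic function having non-vanishing gradient, hence (Theorem \ref{Thm-Equiv-Sol}) a $p$-harmonic function, and one may use \eqref{EqHomogeneous} together with the non-degeneracy $|Dv|\sim|\vec q|$ to control $[w]_{\mathrm{C}^{1,\alpha}}=[v]_{\mathrm{C}^{1,\alpha}}$.

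The remaining case $|\vec q|\le \mathrm{M}_0 L\le \mathrm{C}(n,p)$ is the delicate one, since there the coefficient $\frac{Dw+\vec q}{|Dw+\vec q|}$ is discontinuous on the closed set $\{Dw=-\vec q\}$. Here I would run the Ishii--Lions machinery a second time, now doubling variables against a test function of the form $\Phi(x,y)=w(x)-w(y)-\mathrm{L}_1|x-y|^{1+\gamma}-\mathrm{L}_2|x-x_0|^2$ tuned to yield H\"{o}lder continuity of the gradient directly: applying the theorem on sums at an interior maximum of $\Phi$, the uniform ellipticity inequalities $\mathcal{M}^-_{\lambda,\Lambda}(\cdot)\le(\cdot)\le\mathcal{M}^+_{\lambda,\Lambda}(\cdot)$ from \eqref{Pucci} survive the degenerate set $\{Dw+\vec q=0\}$ unchanged, so the favorable negative term coming from the second difference quotient of the penalization dominates the lower-order contributions for $\mathrm{L}_1,\mathrm{L}_2$ chosen universally large, forcing $\Phi\le 0$ and hence $[w]_{\mathrm{C}^{1,\gamma}(\mathrm B_{1/2})}\le \mathrm{C}(n,p,\gamma)$. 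Setting $\alpha':=\min\{\alpha,\gamma\}$ (shrinking it if needed to match the perturbative exponent) unifies the two cases, and a standard scaling and covering argument passes from $\mathrm B_{1/2}$ to an arbitrary $\mathrm B_r$ with $r\in(0,1/2]$. The principal obstacle is precisely this last case: one must carry out the Ishii--Lions estimates so that every constant stays independent of $\vec q$ even as $Dw+\vec q$ approaches the origin, which demands a careful choice of the concave penalization and a quantitative exploitation of uniform ellipticity to absorb the terms where the frozen direction is undefined.
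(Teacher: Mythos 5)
The paper does not prove this lemma; it is imported verbatim from \cite[Lemma 3.4]{Attou18}, so your proposal must be measured against the proof there. Your preliminary reductions (uniform ellipticity with constants $\min\{1,p-1\},\max\{1,p-1\}$ independent of $\vec q$, Krylov--Safonov H\"older bounds, an Ishii--Lions run with a concave penalization to get a $\vec q$-independent interior Lipschitz bound, and the dichotomy on $|\vec q|$ relative to that Lipschitz constant) match the cited argument, and your treatment of the regime $|\vec q|\ge \mathrm{M}_0 L$ (non-vanishing gradient, coefficient direction pinned near $\widehat{\vec q}$, perturbative $\mathrm{C}^{1,\alpha}$ theory) is sound.

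The gap is in the remaining case $|\vec q|\le \mathrm{M}_0 L$. The second Ishii--Lions run you propose cannot deliver gradient H\"older continuity. First, the mechanism of the Ishii--Lions method is the strictly negative eigenvalue of the Hessian of the penalization coming from $\omega''<0$; your penalization $\omega(s)=\mathrm{L}_1 s^{1+\gamma}$ is \emph{convex} near the origin ($\omega''(s)=\gamma(1+\gamma)\mathrm{L}_1 s^{\gamma-1}>0$), so the ``favorable negative term'' you invoke simply is not there, and the matrix inequalities from the theorem on sums give nothing to absorb the remaining terms. Second, even if one could force $\Phi\le 0$, the conclusion $|w(x)-w(y)|\le \mathrm{L}_1|x-y|^{1+\gamma}$ for all pairs $x,y$ is not a $\mathrm{C}^{1,\gamma}$ seminorm bound: a modulus of continuity $o(|x-y|)$ forces $w$ to be constant, so the scheme proves either nothing or too much. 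Doubling arguments of this type stop at Lipschitz; passing to $\mathrm{C}^{1,\alpha}$ requires a different mechanism (improvement of flatness, or identification with a better equation). The correct argument for this case is precisely the one you mention only as a parenthetical alternative in the large-$|\vec q|$ regime: by the Cutting Lemma (Lemma \ref{CT-Lemma}), $v:=w+\langle\vec q,x\rangle$ is a viscosity solution of $\Delta_p^{\mathrm N}v=0$, hence by Theorem \ref{Thm-Equiv-Sol} a $p$-harmonic function, and since now $\|v\|_{L^\infty(\mathrm B_1)}\le \|w\|_{L^\infty(\mathrm B_1)}+|\vec q|\le 1+\mathrm{C}(n,p)$, the estimate \eqref{EqHomogeneous} yields $[w]_{\mathrm{C}^{1,\alpha^\star}(\mathrm B_{1/2})}=[v]_{\mathrm{C}^{1,\alpha^\star}(\mathrm B_{1/2})}\le \mathrm{C}^\star(1+\mathrm{C}(n,p))$. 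The whole point of the dichotomy is that this $L^\infty$ control of $v$ fails when $|\vec q|$ is large, not that the cutting lemma fails; you have routed the two tools to the wrong cases.
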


Finally, we recall the notion of superjets and subjets, as introduced by Crandall, Ishii, and Lions in \cite{CIL}.

\begin{definition}
The second-order superjet of \( u \) at \( x_0 \in \Omega \) is defined as
\[
\mathcal{J}^{2,+}_{\Omega} u(x_0) = \left\{ (\nabla \phi(x_0), D^2 \phi(x_0)) : \phi \in \mathrm{C}^2 \text{ and } u - \phi \text{ has a local maximum at } x_0 \right\}.
\]
The closure of the superjet is given by
\begin{eqnarray*}
\overline{\mathcal{J}}^{2,+}_{\Omega} u(x_0) = \left\{ (\vec{\xi}, \mathrm{X}) \in \mathbb{R}^n \times \text{Sym}(n): \exists\,\, (\vec{\xi}_k, \mathrm{X}_k) \in \mathcal{J}^{2,+}_{\Omega} u(x_k) \text{ such that}\right.\\ 
\left.(x_k, u(x_k), \vec{\xi}_k, \mathrm{X}_k) \to (x_0, u(x_0), \vec{\xi}, \mathrm{X}) \right\}.
\end{eqnarray*}
Similarly, we define the second-order subjet and its closure.
\end{definition}

With these definitions in place, we are now in a position to present the Ishii–Lions lemma, a fundamental tool for establishing compactness and existence results in the theory of partial differential equations. For further details, we refer the reader to \cite[Theorem 3.2]{CIL}.

\begin{lemma}[\bf Ishii-Lions Lemma]\label{IshiiLions}
Let $u_i$ be a upper semicontinuous function in $\mathrm{B}_1$ for $i=1,\ldots,k$. Let $\varphi$ be defined on $(\mathrm{B}_1)^{k}$ and such that the function 
\[
(x_1,\dots,x_k) \to \varphi(x_1,\dots,x_k) 
\]
is twice continuously differentiable in $(x_1,\dots,x_k) \in (\mathrm{B}_1)^k$. Suppose that
\[
w(x_1,\dots,x_k) := u_1(x_1) + \cdots + u_k(x_k) - \varphi(x_1,\dots,x_k)
\]
attains a local maximum at $(\bar{x}_1,\dots,\bar{x}_k) \in (\mathrm{B}_1)^{k}$. Assume, moreover, that there exists an $r > 0$ such that for every $\mathrm{M}_{\star} > 0$ there is a constant $\mathrm{C}_{\star}$ such that for $i=1,\ldots,k$,
\[
b_i \leq \mathrm{C}_{\star} \quad \text{whenever } (\vec{q}_i,X_i) \in \mathcal{J}^{2,+}_{\mathrm{B}_{1}}u_i(x_i),
\]
\[
|x_i - \bar{x}_i|  \leq r, \quad \text{and} \quad |u_i(x_i)| + |\vec{q}_i| + \|\mathrm{X}_i\| \leq \mathrm{C}_{\star}.
\]
Then for each $\varepsilon > 0$, there exist $\mathrm{X}_i \in \mathrm{Sym}(n)$ such that:
\begin{itemize}
\item[(i)] $(D_{x_i}\varphi(\bar{x}_1,\ldots,\bar{x}_k),\mathrm{X}_i) \in \overline{\mathcal{J}}^{2,+}_{\mathrm{B}_{1}}u_i(\bar{x}_i)$ for $i=1,\dots,k$,
\item[(ii)] $- \left( \frac{1}{\varepsilon} + \|\mathrm{A}\| \right) \mathrm{Id}_n \leq \begin{pmatrix} \mathrm{X}_1 & & 0 \\ & \ddots & \\ 0 & & \mathrm{X}_k \end{pmatrix} \leq \mathrm{A} + \varepsilon \mathrm{A}^2$,
\end{itemize}
where $\mathrm{A} = D^2 \varphi(\bar{x}_1,\ldots,\bar{x}_k)$.
\end{lemma}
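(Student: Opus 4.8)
The statement is the classical \emph{Theorem on Sums}, so I would follow the now-standard route of Crandall--Ishii--Lions, based on sup-convolutions together with Jensen's lemma and Alexandrov's theorem; I record a sketch here for completeness. First I would normalize the configuration: subtracting the smooth function $\varphi$ and adding a strictly concave penalization of the form $-|(x_1,\dots,x_k)-(\bar x_1,\dots,\bar x_k)|^4$, one may assume that $w$ has a \emph{strict} interior maximum at $(\bar x_1,\dots,\bar x_k)$ and that each $u_i$ is bounded above in a neighborhood of $\bar x_i$. The technical jet-boundedness hypothesis is precisely what makes the final conclusion stable under this reduction and under the limit passage described below.

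Next I would regularize each $u_i$ by its sup-convolution $u_i^{\delta}(x)=\sup_{y}\{u_i(y)-\tfrac{1}{2\delta}|x-y|^2\}$, which is semiconvex, satisfies $D^2u_i^{\delta}\ge -\tfrac1\delta\,\mathrm{Id}$ in the viscosity sense, is twice differentiable almost everywhere by Alexandrov's theorem, and decreases to $u_i$ as $\delta\downarrow 0$. Setting $w^{\delta}:=u_1^{\delta}(x_1)+\dots+u_k^{\delta}(x_k)-\varphi(x_1,\dots,x_k)$, the maximum point $(\bar x_1^{\delta},\dots,\bar x_k^{\delta})$ converges to $(\bar x_1,\dots,\bar x_k)$, and reading off the touching points of the sup-convolutions shows that any second-order jet of $u_i^{\delta}$ at $\bar x_i^{\delta}$ yields, in the limit $\delta\to 0$, an element of $\overline{\mathcal J}^{2,+}_{\mathrm B_1}u_i(\bar x_i)$; that its first-order part equals $D_{x_i}\varphi(\bar x_1,\dots,\bar x_k)$ is forced by the maximality of $w^{\delta}$, which delivers (i).

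Then I would invoke Jensen's lemma: since $(x_1,\dots,x_k)\mapsto\sum_i u_i^{\delta}(x_i)-\varphi$ is semiconvex with a strict maximum, for a set of shift vectors $\vec p$ of positive Lebesgue measure the perturbed function attains its maximum at a point where every $u_i^{\delta}$ is twice differentiable; the second-order necessary condition there reads $\mathrm{diag}(D^2u_1^{\delta},\dots,D^2u_k^{\delta})\le D^2\varphi=:\mathrm A$. The lower bound in (ii) is then $-\tfrac1\delta\,\mathrm{Id}$, which one converts into $-(\tfrac1\varepsilon+\|\mathrm A\|)\,\mathrm{Id}$ by coupling $\delta$ to $\varepsilon$; the refined upper bound $\mathrm A+\varepsilon\mathrm A^2$ (rather than just $\mathrm A$) comes from the usual completing-the-square estimate on the relevant quadratic forms, which absorbs the error made in moving from the perturbed maximum point back to $(\bar x_1,\dots,\bar x_k)$. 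Extracting a subsequence as $\delta\to 0$ (using the jet-boundedness hypothesis) and finally sending the penalization to zero produces matrices $\mathrm X_i$ satisfying both (i) and (ii).

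\noindent\textbf{Expected main obstacle.} The delicate part is inequality (ii): one must simultaneously pin the first-order slots exactly to $D_{x_i}\varphi(\bar x_1,\dots,\bar x_k)$ along the limit and upgrade the crude semiconvexity bound to the sharp $\mathrm A+\varepsilon\mathrm A^2$ form. This hinges on choosing the parameters in the right hierarchy ($\varepsilon$ fixed first, then $\delta\to 0$, then the penalization $\to 0$) and on a careful bookkeeping of the quadratic error terms; the remaining ingredients --- Alexandrov's theorem, Jensen's lemma, and the stability of jets under sup-convolution --- are routine once this ordering is fixed.
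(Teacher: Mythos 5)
The paper does not prove this lemma at all: it is quoted as a known result with a citation to Crandall--Ishii--Lions \cite[Theorem 3.2]{CIL}, so there is no in-paper argument to compare against. Your sketch is the standard sup-convolution/Jensen/Alexandrov proof of the Theorem on Sums from that reference (with the usual parameter hierarchy and the matrix bound coming from the sup-convolution estimates), and it is consistent with the cited source.
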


\subsection*{Existence of viscosity solutions}

Here, we focus on the existence and uniqueness of solutions to problems \eqref{Problem} and \eqref{Eq-Henon-type}. Beyond this goal, we introduce a more general Hamiltonian term than the one originally considered. Let \( \mathscr{H}: \mathbb{R}^n \times \Omega  \to \mathbb{R} \) be a continuous function satisfying the following properties:
\begin{itemize}
\item[(\textbf{A1})] \( |\mathscr{H}( \vec{\xi},x)| \leq \mathrm{C}(1+|\vec{\xi}|^{\gamma}+ |\vec{\xi}|^\kappa) \) for some \( 0<\gamma<\kappa \) and all \( (\vec{\xi}, x) \in \mathbb{R}^n \times \Omega \);
\item[(\textbf{A2})] \( |\mathscr{H}(\vec{\xi},x) - \mathscr{H}( \vec{\xi},y)| \leq \omega(|x - y|)(1 + |\vec{\xi}|^{\gamma}+|\vec{\xi}|^\kappa) \), where \( \omega: [0, \infty) \to [0, \infty) \) is a modulus of continuity, i.e., an increasing function such that \( \omega(0) = 0 \).
\end{itemize}

\begin{example}
We now present examples of Hamiltonians that satisfy conditions \(({\bf A1})\)--\(({\bf A2})\):
\begin{itemize}
\item [1.] \(\mathscr{H}(\vec{\xi},x)=\langle \mathfrak{B}(x),\vec{\xi}\rangle|\vec{\xi}|^{\theta}+\varrho(x)|\vec{\xi}|^{\sigma}\), for \(0< \sigma<1+\theta\), where \(\mathfrak{B}\) and \(\varrho\) are uniformly continuous in \(\Omega\). In this case, \(\gamma=\sigma\), \(\kappa=1+\theta\), and the constant \(\mathrm{C}\) in \(({\bf A1})\)--\(({\bf A2})\) depends only on \(\|\varrho\|_{L^{\infty}(\Omega)}\) and \(\|\mathfrak{B}\|_{L^{\infty}(\Omega;\mathbb{R}^{n})}\). Moreover, the modulus of continuity \(\omega\) depends on that of \(\mathfrak{B}\) and \(\varrho\).

\item[2.]  \(\mathscr{H}(\vec{\xi},x)=\mathfrak{a}(x)|\vec{\xi}|^{\theta}+\mathfrak{b}(x)|\vec{\xi}|^{\sigma}\), for \(0< \theta<\sigma\), where \(\mathfrak{a}\) and \(\mathfrak{b}\) are uniformly continuous in \(\Omega\).

\item[3.]  \(\mathscr{H}(\vec{\xi},x)=\mathfrak{a}(x)\frac{(1-|\vec{\xi}|^{2})^{2}-1}{1+|\vec{\xi}|^{2}}+\mathfrak{b}(x)\langle\mathfrak{B}(x)\vec{\xi},\vec{\xi}\rangle^{\frac{\sigma}{2}}\), for \(0<\sigma<2\) or \(\sigma > 2\), where \(\mathfrak{a}\) and \(\mathfrak{b}\) are uniformly continuous functions, and \(\mathfrak{B}:\mathbb{R}^{n}\to\operatorname{Sym}(n)\) is uniformly continuous such that, for each \(x\in \Omega\), the matrix \(\mathfrak{B}(x)\) is nonnegative definite.
\end{itemize}
\end{example}

We therefore study viscosity solutions to equations of the form
\[
|\nabla v|^{\theta} \Delta_p^{\mathrm{N}} v + \mathscr{H}( \nabla v,x) + f_0(v,x) = 0 \quad \text{in} \quad \Omega, \quad \text{and} \quad v = g \quad \text{on} \quad \partial \Omega, \tag{2.1}
\]
where \( f_0 \) and \( g \) are assumed to be continuous.

The archetypal model we consider is
\[
\mathcal{Q}\, v \coloneqq |\nabla v|^{\theta}\left( \Delta_p^{\mathrm{N}} v + \langle\vec{\mathfrak{B}}(x) , \nabla v\rangle\right) + \varrho(x)|\nabla v|^{\sigma} = \mathfrak{a}(x) v_+^m(x) + \mathfrak{h}(x),
\]
where we assume conditions \((\textbf{H1})\)--\((\textbf{H4})\), with \(\mathfrak{h} \in \mathrm{C}^0(\Omega)\).

Inspired by \cite[Lemma 2.14]{ADaSS25} and \cite{BisVo23}, we introduce a fundamental tool that ensures the existence and uniqueness of viscosity solutions to problem \eqref{Problem} (resp. \eqref{Eq-Henon-type}), which will also be essential for establishing weak geometric properties in the following sections. This general formulation will be instrumental in our future investigations.

\begin{lemma}[\bf Comparison Principle]\label{Comp-Princ}
Let $\mathfrak{c}, \mathfrak{f}_{1}, \mathfrak{f}_{2} \in \mathrm{C}^0(\overline{\Omega})$, and let $\mathfrak{F} : \mathbb{R} \to \mathbb{R}$ be a continuous and increasing function such that $\mathfrak{F}(0) = 0$. Suppose that $\mathfrak{u}, \mathfrak{v} \in \mathrm{C}^{0,1}_{\mathrm{loc}}(\Omega)$ are functions satisfying
\begin{equation}\label{CP}
\left\{
\begin{array}{rcll}
|\nabla \mathfrak{u}|^{\theta} \Delta_p^{\mathrm{N}} \mathfrak{u} + \mathscr{H}(\nabla \mathfrak{u}, x) + \mathfrak{c}(x) \mathfrak{F}(\mathfrak{u}) & \geq & \mathfrak{f}_1(x) & \text{in } \Omega, \\[0.2cm]
|\nabla \mathfrak{v}|^{\theta} \Delta_p^{\mathrm{N}} \mathfrak{v} + \mathscr{H}(\nabla \mathfrak{v}, x) + \mathfrak{c}(x) \mathfrak{F}(\mathfrak{v}) & \leq & \mathfrak{f}_2(x) & \text{in } \Omega,
\end{array}
\right.
\end{equation}
in the viscosity sense, where $\mathscr{H}$ is a Hamiltonian function satisfying assumptions \textnormal{(\textbf{A1})} and \textnormal{(\textbf{A2})}. Furthermore, assume that $\mathfrak{v} \geq \mathfrak{u}$ on $\partial \Omega$, and that one of the following conditions holds:
\begin{enumerate}
\item[\textnormal{\textbf{(a)}}] $\mathfrak{c} < 0$ in $\overline{\Omega}$ and $\mathfrak{f}_1 \geq \mathfrak{f}_2$ in $\overline{\Omega}$;
\item[\textnormal{\textbf{(b)}}] $\mathfrak{c} \leq 0$ in $\overline{\Omega}$ and $\mathfrak{f}_1 > \mathfrak{f}_2$ in $\overline{\Omega}$.
\end{enumerate}
Then, $\mathfrak{u} \geq \mathfrak{v}$ in $\Omega$.
\end{lemma}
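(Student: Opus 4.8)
The plan is to argue by contradiction using the doubling-of-variables technique together with the Ishii--Lions Lemma (Lemma~\ref{IshiiLions}), in the spirit of \cite[Lemma 2.14]{ADaSS25} and classical comparison arguments for degenerate operators. Suppose, for contradiction, that $\displaystyle \mathrm{M} \defeq \sup_{\overline{\Omega}}(\mathfrak{v} - \mathfrak{u}) > 0$. Since $\mathfrak{v} \geq \mathfrak{u}$ on $\partial\Omega$ and both functions are continuous up to the boundary, this supremum is attained at an interior point. For $j \in \mathbb{N}$ large, consider the auxiliary function
\[
\Psi_j(x,y) \defeq \mathfrak{v}(x) - \mathfrak{u}(y) - \frac{j}{2}|x-y|^2,
\]
and let $(x_j, y_j) \in \overline{\Omega}\times\overline{\Omega}$ be a maximum point. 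The standard penalization lemma (see \cite{CIL}) gives $j|x_j - y_j|^2 \to 0$, $x_j, y_j \to \hat{x}$ for some interior point $\hat{x}$ with $(\mathfrak{v}-\mathfrak{u})(\hat{x}) = \mathrm{M}$, and $\Psi_j(x_j,y_j) \to \mathrm{M}$; in particular, for $j$ large the points $x_j, y_j$ lie in a compact subset of $\Omega$.

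\textbf{Splitting into the degenerate and non-degenerate cases.} Write $\vec{q}_j \defeq j(x_j - y_j)$. We distinguish two cases according to whether the common gradient slot degenerates. \emph{Case 1: $\vec{q}_j \to 0$ along a subsequence} (the ``degenerate'' regime). Here we must use the relaxed viscosity inequalities at points with vanishing gradient (cf.\ Definition~\ref{DefViscSol}, item (2)); the Hamiltonian bound $|\mathscr{H}(\vec{q}_j, x)| \leq \mathrm{C}(1 + |\vec{q}_j|^\gamma + |\vec{q}_j|^\kappa) \to \mathrm{C}$ from \textnormal{(\textbf{A1})} stays under control, and the terms $\mathscr{H}(\vec{q}_j,x_j) - \mathscr{H}(\vec{q}_j,y_j)$ are handled by \textnormal{(\textbf{A2})} since $|x_j - y_j| \to 0$. \emph{Case 2: $|\vec{q}_j| \geq \mathrm{c} > 0$ along a subsequence} (the ``non-degenerate'' regime). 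Here we apply Lemma~\ref{IshiiLions} with $k=2$, $u_1 = \mathfrak{v}$, $u_2 = -\mathfrak{u}$, and $\varphi(x,y) = \frac{j}{2}|x-y|^2$, obtaining matrices $\mathrm{X}_j, \mathrm{Y}_j \in \mathrm{Sym}(n)$ with $(\vec{q}_j, \mathrm{X}_j) \in \overline{\mathcal{J}}^{2,+}\mathfrak{v}(x_j)$, $(\vec{q}_j, \mathrm{Y}_j) \in \overline{\mathcal{J}}^{2,-}\mathfrak{u}(y_j)$, and the matrix inequality $\mathrm{X}_j \leq \mathrm{Y}_j$ (choosing $\varepsilon$ appropriately in the standard way, since $\mathrm{A}^2$ is a multiple of the projection onto the diagonal). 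The key point is that the two operators are evaluated at the \emph{same} gradient vector $\vec{q}_j$, so that
\[
|\vec{q}_j|^\theta \Delta_p^{\mathrm{N},\vec{q}_j}(\mathrm{X}_j) - |\vec{q}_j|^\theta \Delta_p^{\mathrm{N},\vec{q}_j}(\mathrm{Y}_j) = |\vec{q}_j|^\theta\, \mathrm{tr}\!\left[\left(\mathrm{Id}_n + (p-2)\tfrac{\vec{q}_j}{|\vec{q}_j|}\otimes\tfrac{\vec{q}_j}{|\vec{q}_j|}\right)(\mathrm{X}_j - \mathrm{Y}_j)\right] \leq 0,
\]
by uniform ellipticity (the coefficient matrix lies in $\mathcal{A}_{\lambda_p^{\mathrm{N}},\Lambda_p^{\mathrm{N}}}$). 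Subtracting the supersolution inequality for $\mathfrak{v}$ from the subsolution inequality for $\mathfrak{u}$ then leaves us with
\[
0 \leq \left(|\vec{q}_j|^\theta\Delta_p^{\mathrm{N}}\text{-terms}\right) + \big(\mathscr{H}(\vec{q}_j,x_j) - \mathscr{H}(\vec{q}_j,y_j)\big) + \big(\mathfrak{c}(x_j)\mathfrak{F}(\mathfrak{v}(x_j)) - \mathfrak{c}(y_j)\mathfrak{F}(\mathfrak{u}(y_j))\big) + \big(\mathfrak{f}_2(x_j) - \mathfrak{f}_1(y_j)\big).
\]
The first bracket is $\leq 0$; the second tends to $0$ by \textnormal{(\textbf{A2})}; and the last tends to $\mathfrak{f}_2(\hat{x}) - \mathfrak{f}_1(\hat{x}) \leq 0$ by continuity and hypothesis.

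\textbf{Closing the argument via the sign of $\mathfrak{c}$.} The zeroth-order term is where the dichotomy (a)/(b) enters. In case~(b), $\mathfrak{c}\leq 0$ and $\mathfrak{f}_1 > \mathfrak{f}_2$: passing to the limit, the zeroth-order term converges to $\mathfrak{c}(\hat{x})\big(\mathfrak{F}(\mathfrak{v}(\hat{x})) - \mathfrak{F}(\mathfrak{u}(\hat{x}))\big) \leq 0$ since $\mathfrak{F}$ is increasing and $\mathfrak{v}(\hat{x}) > \mathfrak{u}(\hat{x})$, so the whole right-hand side is $\leq \mathfrak{f}_2(\hat{x}) - \mathfrak{f}_1(\hat{x}) < 0$, contradicting $0 \leq (\cdots)$. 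In case~(a), $\mathfrak{c} < 0$ strictly and $\mathfrak{f}_1 \geq \mathfrak{f}_2$: the strict negativity of $\mathfrak{c}(\hat{x})$ together with $\mathfrak{F}(\mathfrak{v}(\hat{x})) - \mathfrak{F}(\mathfrak{u}(\hat{x})) > 0$ (using that $\mathfrak{F}$ is strictly increasing, or otherwise perturbing $\mathfrak{v}$ by $+\epsilon$ and letting $\epsilon\to 0$ at the end) forces the zeroth-order term to be strictly negative, again yielding $0 \leq (\text{something} < 0)$, a contradiction. The same reasoning applied in Case~1, using the relaxed inequalities, yields the same contradiction. Hence $\mathrm{M} \leq 0$, i.e.\ $\mathfrak{u} \geq \mathfrak{v}$ in $\Omega$.

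\textbf{Main obstacle.} The delicate point is Case~1, the degenerate regime $\vec{q}_j \to 0$: there the standard Ishii--Lions matrices are attached to a vanishing gradient and the operator $\Delta_p^{\mathrm{N}}$ is only defined through its semicontinuous envelopes, so one must argue directly with the relaxed conditions in Definition~\ref{DefViscSol}(2), carefully tracking that $\lambda_{\max}(\mathrm{X}_j) \geq \lambda_{\max}(\mathrm{Y}_j)$ and $\lambda_{\min}(\mathrm{X}_j) \geq \lambda_{\min}(\mathrm{Y}_j)$ cannot both point the wrong way given $\mathrm{X}_j \leq \mathrm{Y}_j$ — which in fact follows since $\mathrm{X}_j \leq \mathrm{Y}_j$ implies both eigenvalue inequalities — and that the $|\vec{q}_j|^\theta$ prefactor is handled uniformly because $\theta > 0$ makes it vanish. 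A secondary technical nuisance is ensuring the first-order term $\mathscr{H}$ does not blow up: this is exactly why assumptions \textnormal{(\textbf{A1})}--\textnormal{(\textbf{A2})} are imposed with the sublinear/borderline exponents $\gamma, \kappa$, and why the quantity $|\vec{q}_j|^\gamma + |\vec{q}_j|^\kappa$ remains bounded along the penalization sequence.
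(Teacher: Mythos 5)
Your overall strategy coincides with the paper's: doubling of variables, the Ishii--Lions Lemma to produce second-order jets sharing a common gradient slot, uniform ellipticity of $\mathrm{Id}_n+(p-2)\frac{\vec{q}}{|\vec{q}|}\otimes\frac{\vec{q}}{|\vec{q}|}$ to kill the second-order difference, assumptions (\textbf{A1})--(\textbf{A2}) for the Hamiltonian, and the sign of $\mathfrak{c}$ to close. However, there are two genuine gaps. First, the orientation of your doubling is incompatible with the inequalities you need. A maximum of $\Psi_j(x,y)=\mathfrak{v}(x)-\mathfrak{u}(y)-\frac{j}{2}|x-y|^2$ yields an element of $\overline{\mathcal{J}}^{2,+}\mathfrak{v}(x_j)$ and one of $\overline{\mathcal{J}}^{2,-}\mathfrak{u}(y_j)$; but $\mathfrak{v}$ is the \emph{supersolution} (tested only on subjets) and $\mathfrak{u}$ is the \emph{subsolution} (tested only on superjets), so neither inequality in \eqref{CP} can be invoked at the jets you construct. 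Relatedly, your claim that the positive supremum of $\mathfrak{v}-\mathfrak{u}$ must be interior does not follow: the boundary hypothesis is $\mathfrak{v}\ge\mathfrak{u}$ on $\partial\Omega$, which allows $\mathfrak{v}-\mathfrak{u}$ to be large and positive there. Both defects trace back to the direction of the conclusion: the hypotheses support ``subsolution $\le$ supersolution.'' The paper's proof supposes $\sup_{\overline{\Omega}}(\mathfrak{u}-\mathfrak{v})>0$ and doubles as $\mathfrak{u}(x)-\mathfrak{v}(y)-\frac{1}{2\varepsilon}|x-y|^2$, so the boundary condition does force an interior maximum and the jets come out with the correct parity; it thereby establishes $\mathfrak{u}\le\mathfrak{v}$ (the ``$\ge$'' in the displayed conclusion is evidently a sign slip in the paper). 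Once you reorient the argument this way, the remainder of your computation matches the paper's.

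Second, you assert but never justify that $|\vec{q}_j|=j|x_j-y_j|$ stays bounded, and this is essential: assumption (\textbf{A2}) only gives $|\mathscr{H}(\vec{q}_j,x_j)-\mathscr{H}(\vec{q}_j,y_j)|\le\omega(|x_j-y_j|)\left(1+|\vec{q}_j|^{\gamma}+|\vec{q}_j|^{\kappa}\right)$, while the standard penalization estimate $j|x_j-y_j|^2\to 0$ only yields $|\vec{q}_j|=o(j^{1/2})$, so the product need not vanish. This is exactly where the hypothesis $\mathfrak{u},\mathfrak{v}\in\mathrm{C}^{0,1}_{\mathrm{loc}}(\Omega)$ enters: as in the paper, comparing the value of the doubled functional at $(x_j,y_j)$ with its value at $(x_j,x_j)$ and using the local Lipschitz bound gives $j|x_j-y_j|\le 2\mathfrak{L}$, which is what makes the Hamiltonian difference tend to zero. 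You never use the Lipschitz hypothesis, so this step is missing. On the positive side, your explicit dichotomy between the degenerate regime $\vec{q}_j\to 0$ (handled through the semicontinuous envelopes of Definition \ref{DefViscSol}) and the non-degenerate one is more careful than the paper, which does not address the possibility $x_\varepsilon=y_\varepsilon$ at all; note that even after the Lipschitz bound is in place this degenerate alternative can still occur and must be treated as you outline.
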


\begin{proof}
We will proceed with the proof using a \textit{reductio ad absurdum} argument. Specifically, assume that there exists a constant $\mathrm{M}_0 > 0$ such that
\begin{equation}
\mathrm{M}_0 \coloneqq \sup_{\overline{\Omega}} (\mathfrak{u} - \mathfrak{v}) > 0.\label{condideM0}
\end{equation}
For each $\varepsilon > 0$, define
\begin{equation}\label{maximum}
\mathrm{M}_\varepsilon = \sup_{(x,y) \in \overline{\Omega} \times \overline{\Omega}} \left( \mathfrak{u}(x) - \mathfrak{v}(y) - \frac{1}{2\varepsilon} |x - y|^2 \right) < \infty.
\end{equation}
Consider $(x_\varepsilon, y_\varepsilon) \in \overline{\Omega} \times \overline{\Omega}$ as the point where the supremum \(\mathrm{M}_{\varepsilon}\) is attained. Following the same approach as in \cite[Lemma 3.1]{CIL}, we know that
\begin{equation}\label{CP1}
\lim_{\varepsilon \to 0} \frac{1}{\varepsilon} |x_\varepsilon - y_\varepsilon|^2 = 0 \quad \text{and} \quad \lim_{\varepsilon \to 0} \mathrm{M}_\varepsilon = \mathrm{M}_0.
\end{equation}
In particular,
\begin{equation}\label{CP2}
x_0 \coloneqq \lim_{\varepsilon \to 0} x_\varepsilon = \lim_{\varepsilon \to 0} y_\varepsilon,
\end{equation}
where $\mathfrak{u}(x_0) - \mathfrak{v}(x_0) = \mathrm{M}_0$. From \eqref{condideM0}, it follows that
\[
\sup_{\partial \Omega} (\mathfrak{u} - \mathfrak{v}) \leq 0 < \mathrm{M}_0.
\]

Furthermore, since the maximizer cannot approach the boundary, there exists a compact subset $\Omega_1 \Subset \Omega$ such that $x_\varepsilon, y_\varepsilon \in \Omega_1$ for all sufficiently small $\varepsilon > 0$. By the Ishii-Lions Lemma \ref{IshiiLions}, there exist matrices $\mathrm{X}, \mathrm{Y} \in \text{Sym}(n)$ such that
\begin{equation}\label{CP3}
\left( \frac{x_\varepsilon - y_\varepsilon}{\varepsilon}, \mathrm{X} \right) \in \overline{\mathcal{J}}^{2,+}_{\Omega_1} \mathfrak{u}(x_\varepsilon) \quad \text{and} \quad \left( \frac{y_\varepsilon - x_\varepsilon}{\varepsilon}, \mathrm{Y} \right) \in \overline{\mathcal{J}}^{2,-}_{\Omega_1} \mathfrak{v}(y_\varepsilon),
\end{equation}
and
\begin{equation}\label{CP4}
-\frac{3}{\varepsilon} \begin{pmatrix}
\mathrm{Id}_n & 0 \\
0 & \mathrm{Id}_n
\end{pmatrix} \leq \begin{pmatrix}
\mathrm{X} & 0 \\
0 & -\mathrm{Y}
\end{pmatrix} \leq \frac{3}{\varepsilon} \begin{pmatrix}
\mathrm{Id}_n & -\mathrm{Id}_n \\
-\mathrm{Id}_n & \mathrm{Id}_n
\end{pmatrix}.
\end{equation}
In particular, we obtain $\mathrm{X} \leq \mathrm{Y}$.

Now, given that $\mathfrak{u}$ and $\mathfrak{v}$ are Lipschitz continuous in $\Omega_1$, there exists a positive constant $\mathfrak{L} \defeq \max\{[\mathfrak{u}]_{\mathrm{\mathrm{C}^{0, 1}(\Omega_1)}}, [\mathfrak{v}]_{\mathrm{\mathrm{C}^{0, 1}(\Omega_1)}}\}$ such that
\[
|u(z_1) - u(z_2)| + |v(z_1) - v(z_2)| \leq \mathfrak{L}|z_1 - z_2| \quad \text{for all} \quad z_1, z_2 \in \Omega_1.
\]

Moreover, using the inequality
\[
\mathfrak{u}(x_\varepsilon) - \mathfrak{v}(x_\varepsilon) \leq \mathfrak{u}(x_\varepsilon) - \mathfrak{v}(y_\varepsilon) - \frac{1}{2\varepsilon}|x_\varepsilon - y_\varepsilon|^2,
\]
we deduce the following
\begin{equation}\label{Eq3.9}
\frac{1}{\varepsilon}|x_\varepsilon - y_\varepsilon| \leq 2\mathfrak{L}.
\end{equation}

Now, letting $\eta_\varepsilon = \frac{x_\varepsilon - y_\varepsilon}{\varepsilon}$ and using \eqref{CP} and \eqref{CP3}, we derive
\begin{align*}
\mathfrak{f}_1(x_\varepsilon) &\leq |\eta_\varepsilon|^\gamma \left( \operatorname{tr}(\mathrm{X}) + (p - 2) \left\langle \mathrm{X} \eta_\varepsilon, \eta_\varepsilon \right\rangle \right) + \mathfrak{c}(x_\varepsilon) \mathfrak{F}(\mathfrak{u}(x_\varepsilon)) + \mathscr{H}(\eta_\varepsilon,x_\varepsilon) \\
&\leq |\eta_\varepsilon|^\gamma \left( \operatorname{tr}(\mathrm{Y}) + (p - 2) \left\langle \mathrm{Y} \eta_\varepsilon, \eta_\varepsilon \right\rangle \right) + \mathfrak{c}(x_\varepsilon) \mathfrak{F}(\mathfrak{v}(x_\varepsilon)) + \mathscr{H}(\eta_\varepsilon,x_\varepsilon) \\
&\leq \mathfrak{f}_2(y_\varepsilon) - \mathfrak{c}(y_\varepsilon) \mathfrak{F}(\mathfrak{v}(y_\varepsilon)) - \mathscr{H}(\eta_\varepsilon,y_\varepsilon) + \mathfrak{c}(x_\varepsilon) \mathfrak{F}(\mathfrak{u}(x_\varepsilon)) + \mathscr{H}(\eta_\varepsilon,x_\varepsilon) \\
&\leq \mathfrak{f}_2(y_\varepsilon) + \mathfrak{F}(\mathfrak{v}(y_\varepsilon)) (\mathfrak{c}(x_\varepsilon) - \mathfrak{c}(y_\varepsilon)) + \left[ \min_{\overline{\Omega}} \mathfrak{c} \right] (\mathfrak{F}(\mathfrak{u}(x_\varepsilon)) - \mathfrak{F}(\mathfrak{v}(y_\varepsilon))) \\
&\quad + \omega(|x_{\varepsilon}-y_{\varepsilon}|)(1+|\eta_{\varepsilon}|^{\gamma}+|\eta_\varepsilon|^{\kappa}).
\end{align*}

Finally, by letting \(\varepsilon \to 0\) in the last inequality, and using the continuity of  \(\mathfrak{c}\), \(\mathfrak{B}\), and \(\varrho\), and recalling the convergences in  \eqref{CP1},  \eqref{CP2}, and invoking \eqref{Eq3.9}, we obtain
\[
\mathfrak{f}_1(x_0) - \mathfrak{f}_2(x_0) \leq \left[ \min_{\overline{\Omega}} \mathfrak{c} \right] (\mathfrak{F}(\mathfrak{u}(x_0)) - \mathfrak{F}(\mathfrak{v}(x_0))),
\]
which contradicts the assumptions $\textbf{(a)}$ and $\textbf{(b)}$. Therefore, we conclude that $\mathfrak{u} \geq \mathfrak{v}$ in $\Omega$.
\end{proof}

\bigskip

\begin{remark} 
Although the assumption $\mathfrak{u}, \mathfrak{v} \in \mathrm{C}^{0, 1}_{\text{loc}}(\Omega)$ may seem restrictive, it is entirely consistent with the applications of the Comparison Principle that we will present in this article. As an example, a subsolution (resp. supersolution) to the problem \eqref{Problem} is the one that satisfies the Dirichlet problem
$$
\left\{
\begin{array}{rclcl}
|\nabla u^{\ast}|^{\theta} \Delta_p^{\mathrm{N}} u^{\ast} + \mathscr{H}(\nabla u^{\ast}, x) & = & -\|f\| & \text{in } & \Omega, \\
u^{\ast}(x) & = & g(x) & \text{on } & \partial \Omega,
\end{array}
\right.
$$
and
$$
\left\{
\begin{array}{rclcl}
|\nabla u_{\ast}|^{\theta} \Delta_p^{\mathrm{N}} u_{\ast} + \mathscr{H}(\nabla u_{\ast}, x) & = & \|f\| & \text{in } & \Omega, \\
u_{\ast}(x) & = & g(x) & \text{on } & \partial \Omega.
\end{array}
\right.
$$
Hence, from Theorem \ref{Thm1.1}, such a sub/supersolution belongs to $\mathrm{C}_{\text{loc}}^{1, \alpha}(\Omega)$, i.e., they are locally Lipschitz continuous. In other words, we can always verify such a hypothesis to apply the Comparison Principle.
\end{remark}
\medskip

A natural consequence of the Comparison Principle is the existence of a viscosity solution to the Dirichlet problem associated with equation \eqref{Problem} (resp. \eqref{Eq-Henon-type}). This result is obtained via Perron's method, provided that a version of the Comparison Principle holds. 

The existence of such solutions can be established using standard techniques. Moreover, it is evident that $u^{\ast}$ and $u_{\ast}$ act as a supersolution and a subsolution, respectively, to \eqref{Problem}. Therefore, by invoking the Comparison Principle (Lemma \ref{Comp-Princ}), Perron's method guarantees the existence of a viscosity solution in $\mathrm{C}^0(\Omega)$ to \eqref{Problem} (resp. \eqref{Eq-Henon-type}). 

\section{Compactness and Approximation devices}\label{Section4}

In this section, we study the properties of solutions to the translated problem
\begin{equation}\label{problematransladado}
|\nabla w+\vec{q}|^{\theta}\Delta_{p}^{\mathrm{N}}(w+\langle\vec{q}, x\rangle)+\mathscr{H}(\nabla w+\vec{q},x)=\bar{f}(x)\quad\text{in } \mathrm{B}_{1},
\end{equation}
where \(\vec{q}\) is a vector in \(\mathbb{R}^{n}\).

We first establish the local Hölder continuity of solutions to problem \eqref{problematransladado}, under suitable control conditions on the Hamiltonian \(\mathscr{H}\) and the vector \(\vec{q}\).

\begin{lemma}[\bf Local Hölder Estimates]\label{Holderest}
Let \(\vec{q} \in \mathbb{R}^{n}\), and let \(w\) be a bounded viscosity solution to \eqref{problematransladado} in \(\mathrm{B}_{1}\). For any \(\mu \in (0,1)\), there exist positive constants \(\eta_{0}\) and \(\mathrm{C}_{\star}\), depending only on \(n\), \(p\), \(\mu\), \(\sigma\), and \(\theta\), such that if
\[
\|\mathfrak{B}\|_{L^{\infty}(\mathrm{B}_{1};\mathbb{R}^{n})} \leq \mathrm{C}_{\star} \quad \text{and} \quad \max\left\{\|\mathfrak{B}\|_{L^{\infty}(\mathrm{B}_{1};\mathbb{R}^{n})}(1+|\vec{q}|),\|\varrho\|_{L^{\infty}(\mathrm{B}_{1})}(1+|\vec{q}|^{\sigma-\theta})\right\} \leq \eta_{0},
\]
then \(w \in \mathrm{C}^{0,\mu}(\mathrm{B}_{15/16})\), and for all \(x, y \in \mathrm{B}_{15/16}\), the following estimate holds:
\begin{eqnarray*}
|w(x)-w(y)| \leq \mathrm{C}\left(\|w\|_{L^{\infty}(\mathrm{B}_{1})}+\|\bar{f}\|_{L^{\infty}(\mathrm{B}_{1})}^{\frac{1}{1+\theta}}+\|\varrho\|_{L^{\infty}(\mathrm{B}_{1})}^{\frac{1}{1+\theta-\sigma}}\right)|x-y|^{\mu},
\end{eqnarray*}
where \(\mu \in (0,1)\) depends only on \(n\), \(p\), \(\theta\), and \(\sigma\), and the constant \(\mathrm{C} > 0\) also depends on \(\mu\).
\end{lemma}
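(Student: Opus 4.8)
The plan is to prove the local H\"older estimate for solutions of the translated equation \eqref{problematransladado} by a direct application of the Ishii--Lions technique, doubling the variables and using an auxiliary concave modulus to generate a negative second-order term that absorbs the contributions of the Hamiltonian and the right-hand side. Concretely, fix $x_0 \in \mathrm{B}_{7/8}$ and, working on a slightly smaller ball, I would introduce the auxiliary function
\[
\Phi(x,y) = w(x) - w(y) - L\,\omega(|x-y|) - \frac{M}{2}\left(|x-x_0|^2 + |y-x_0|^2\right),
\]
where $\omega(s) = s^\mu$ for $0 \le s \le s_0$ and is extended to a bounded $\mathrm{C}^2$ concave function, $M$ is a large constant chosen (depending on $\|w\|_{L^\infty(\mathrm{B}_1)}$) so that the quadratic penalization confines the maximum to the interior, and $L$ is the H\"older constant we wish to bound. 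Assuming for contradiction that $\max_{\overline{\mathrm{B}_1}\times\overline{\mathrm{B}_1}}\Phi > 0$ for $L$ large, the maximum is attained at an interior pair $(\bar x,\bar y)$ with $\bar x \ne \bar y$; standard bookkeeping gives $|\bar x - \bar y| \to 0$ as $L \to \infty$ and a bound of the form $|\bar x - \bar y| \lesssim (\text{something}/L)^{1/(1-\mu)}$, so the penalized gradient $\vec p := L\,\omega'(|\bar x-\bar y|)\frac{\bar x - \bar y}{|\bar x - \bar y|} + M(\bar x - x_0)$ has magnitude $|\vec p| \sim L|\bar x-\bar y|^{\mu-1}$, which is large.

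Next I would apply the Ishii--Lions Lemma \ref{IshiiLions} to the doubled function, obtaining symmetric matrices $\mathrm{X},\mathrm{Y}$ in the closures of the appropriate sub/superjets of $w$ at $\bar x,\bar y$, with the usual matrix inequality
\[
\begin{pmatrix} \mathrm{X} & 0 \\ 0 & -\mathrm{Y}\end{pmatrix} \le D^2(L\,\omega(|x-y|)) + \varepsilon(\cdots),
\]
plus the quadratic part $M\,\mathrm{Id}$. The crucial structural point, classical for the Ishii--Lions method with a concave modulus, is that in the direction $e = \frac{\bar x - \bar y}{|\bar x - \bar y|}$ one has $\langle (\mathrm{X}-\mathrm{Y})e,e\rangle \le 4 L\,\omega''(|\bar x - \bar y|) + O(M)$, a \emph{strictly negative} quantity of order $-L|\bar x - \bar y|^{\mu-2}$ (recall $\omega'' < 0$ since $\mu<1$), while the other eigenvalues of $\mathrm{X}-\mathrm{Y}$ are merely bounded by $O(L|\bar x-\bar y|^{\mu-1}) + O(M)$. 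Plugging $(\vec p,\mathrm{X})$ and $(\vec p,\mathrm{Y})$ into the sub/supersolution inequalities for \eqref{problematransladado} — using that the operator $\xi \mapsto |\xi+\vec q|^\theta\,\mathrm{tr}\!\big((\mathrm{Id}+(p-2)\frac{(\xi+\vec q)\otimes(\xi+\vec q)}{|\xi+\vec q|^2})\,\cdot\big)$ is uniformly elliptic between the Pucci bounds with eigenvalues $\lambda_p^{\mathrm{N}},\Lambda_p^{\mathrm{N}}$ — I subtract the two inequalities. The ellipticity, combined with the fact that the degeneracy weight $|\vec p + \vec q|^\theta \sim |\vec p|^\theta$ is comparable on both sides (here is where $|\vec q|$ must be controlled relative to $|\vec p|$, or $|\vec q|$ simply enters as an additive bounded perturbation), produces a good term $\sim -c\,|\vec p|^\theta\, L\,|\bar x-\bar y|^{\mu-2}$. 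The bad terms are: the contribution of the "spread" eigenvalues of $\mathrm{X}-\mathrm{Y}$, of size $|\vec p|^\theta \cdot O(L|\bar x-\bar y|^{\mu-1} + M)$; the Hamiltonian difference $\mathscr{H}(\vec p, \bar x) - \mathscr{H}(\vec p, \bar y)$, bounded via (H3) and the continuity/smallness hypotheses by $\|\mathfrak{B}\|_{L^\infty}(1+|\vec q|)|\vec p|^{1+\theta} + \|\varrho\|_{L^\infty}(1+|\vec q|^{\sigma-\theta})|\vec p|^\sigma$ together with a modulus-of-continuity error $\omega_{\mathfrak B,\varrho}(|\bar x-\bar y|)(|\vec p|^{1+\theta}+|\vec p|^\sigma)$; and the source difference $\|\bar f\|_{L^\infty}$. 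The smallness assumptions $\max\{\|\mathfrak B\|_{L^\infty}(1+|\vec q|),\|\varrho\|_{L^\infty}(1+|\vec q|^{\sigma-\theta})\}\le\eta_0$ and $\|\mathfrak B\|_{L^\infty}\le \mathrm{C}_\star$ are designed precisely so that these Hamiltonian terms, which carry the \emph{same} power $|\vec p|^{1+\theta}$ as the good term (for the linear piece) and a strictly lower power $|\vec p|^\sigma$ with $\sigma<1+\theta$ (for the sublinear piece, reabsorbed by Young's inequality at cost of the $\|\varrho\|^{1/(1+\theta-\sigma)}$ term), cannot overwhelm the good negative term; choosing $\eta_0$ small and $L$ large relative to $\|w\|_{L^\infty}+\|\bar f\|^{1/(1+\theta)}+\|\varrho\|^{1/(1+\theta-\sigma)}$ yields a contradiction, which forces $\Phi \le 0$ everywhere and hence the claimed estimate on $\mathrm{B}_{15/16}$ after adjusting radii and a covering argument.

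The main obstacle I anticipate is the bookkeeping that keeps the "good" negative term dominant: one must verify that on the maximum pair the penalized gradient $\vec p$ is genuinely large (so that $|\vec p|^\theta$ is not degenerate and the term $L|\bar x - \bar y|^{\mu-2}$ beats $L|\bar x-\bar y|^{\mu-1}$), which requires the standard dichotomy — either $|\vec p|$ is large, in which case the above works, or $|\vec p|$ is bounded, in which case $L|\bar x-\bar y|^{\mu-1}$ is bounded, $w$ is Lipschitz-like at that scale and the estimate is trivial. A second delicate point is the interaction between the degeneracy exponent $\theta$ and the translation vector $\vec q$: the operator evaluated at $\vec p + \vec q$ rather than $\vec p$ must be handled so that $|\vec p+\vec q|^\theta$ and $|\vec p+\vec q|^{2-p}$-type factors remain comparable to their $\vec p$-counterparts up to constants absorbable by $\eta_0$; this is exactly why the hypotheses couple $\|\mathfrak B\|$ and $\|\varrho\|$ to powers of $(1+|\vec q|)$. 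Everything else — the choice of $\omega$, the properties of its derivatives, the penalization constant $M$, the final normalization of constants — is routine once this core estimate is in place, and it mirrors the by-now-standard Ishii--Lions arguments in \cite{IS13}, \cite{BirDem14}, \cite{Attou18}, adapted to accommodate the Hamiltonian $\mathscr H$.
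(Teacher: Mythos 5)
Your proposal is correct and follows essentially the same route as the paper: doubling of variables with the concave modulus $\omega(t)=t^{\mu}$ plus quadratic localization, the Ishii--Lions matrix inequalities yielding a strictly negative term of order $-\mathfrak{L}_2\mu|\bar x-\bar y|^{\mu-2}$ in the direction $\frac{\bar x-\bar y}{|\bar x-\bar y|}$, and the coupled smallness of $\|\mathfrak{B}\|(1+|\vec q|)$ and $\|\varrho\|(1+|\vec q|^{\sigma-\theta})$ to keep the translated gradient bounded below and absorb the Hamiltonian. The only detail your sketch glosses over is that the two jets carry slightly different first components (the penalization gradients at $\bar x$ and $\bar y$ differ), which produces the extra anisotropy error $\operatorname{tr}((\mathfrak{A}(\zeta_1)-\mathfrak{A}(\zeta_2))\mathrm{Y})$ that the paper controls via the Lipschitz estimate for $\eta\mapsto\mathfrak{A}(\eta)$ from \cite{Attou18}; this is routine and consistent with your stated plan.
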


\begin{proof}
The main idea of the proof is to employ the viscosity method introduced by Ishii and Lions (cf. \cite{IL}) adopted to the current scenario of linear/sublinear Hamiltonian terms. 

We fix \(x_{0}, y_{0} \in \mathrm{B}_{\frac{15}{16}}\). For suitable positive constants \(\mathfrak{L}_{1}\) and \(\mathfrak{L}_{2}\), we define the function
\begin{eqnarray*}
\phi(x,y) := w(x) - w(y) - \mathfrak{L}_{2}\omega(|x - y|) - \frac{\mathfrak{L}_{1}}{2}\left(|x - x_{0}|^{2} + |y - y_{0}|^{2}\right),
\end{eqnarray*}
where the function \(\omega\) is given by \(\omega(t) = t^{\mu}\). We claim that \(\phi \leq 0\) in \(\overline{\mathrm{B}_{\frac{15}{16}}} \times \overline{\mathrm{B}_{\frac{15}{16}}}\). To prove this, we proceed by a \textit{reductio ad absurdum} argument. Suppose, by contradiction, that there exists a point \((\bar{x}, \bar{y})\) where \(\phi\) attains a positive maximum. In particular, \(\phi(\bar{x}, \bar{y}) > 0\) implies that \(\bar{x} \neq \bar{y}\).

Now, choosing
\begin{eqnarray*}
\mathfrak{L}_{1} \geq \frac{16\|w\|_{L^{\infty}(\mathrm{B}_{1})}}{\left(\min\left\{\dist(x_{0}, \partial \mathrm{B}_{15/16}), \dist(y_{0}, \partial \mathrm{B}_{15/16})\right\}\right)^{2}} =: \mathrm{C}\|w\|_{L^{\infty}(\mathrm{B}_{1})},
\end{eqnarray*}
we ensure that
\begin{eqnarray*}
|\bar{x} - x_{0}| \leq \frac{\dist(x_{0}, \partial \mathrm{B}_{15/16})}{2} \quad \text{and} \quad |\bar{y} - y_{0}| \leq \frac{\dist(y_{0}, \partial \mathrm{B}_{15/16})}{2}.
\end{eqnarray*}
Consequently, the points \(\bar{x}\) and \(\bar{y}\) belong to \(\mathrm{B}_{\frac{15}{16}}\). 

With these preliminary observations in place, we apply the Ishii–Lions Lemma \ref{IshiiLions} to the functions
\[
\tilde{u}(x) := w(x) - \frac{\mathfrak{L}_{1}}{2}|x - x_{0}|^{2} \quad \text{and} \quad \tilde{v}(y) := -w(y) - \frac{\mathfrak{L}_{1}}{2}|y - y_{0}|^{2},
\]
and obtain
\begin{eqnarray*}
\left(\varsigma_{1}, \mathrm{X} + \mathfrak{L}_{1} \mathrm{Id}_{n}\right) \in \overline{\mathcal{J}}^{2,+}_{\mathrm{B}_{\frac{15}{16}}}(w)(\bar{x}) \quad \text{and} \quad \left(\varsigma_{2}, \mathrm{Y} - \mathfrak{L}_{1} \mathrm{Id}_{n}\right) \in \overline{\mathcal{J}}^{2,-}_{\mathrm{B}_{\frac{15}{16}}}(w)(\bar{y}).
\end{eqnarray*}
Here,
\begin{eqnarray*}
\varsigma_{1} = \mathfrak{L}_{2} \omega'(|\bar{x} - \bar{y}|)\frac{\bar{x} - \bar{y}}{|\bar{x} - \bar{y}|} + \mathfrak{L}_{1}(\bar{x} - x_{0}), \\
\varsigma_{2} = \mathfrak{L}_{2} \omega'(|\bar{x} - \bar{y}|)\frac{\bar{x} - \bar{y}}{|\bar{x} - \bar{y}|} - \mathfrak{L}_{1}(\bar{y} - y_{0}).
\end{eqnarray*}

Imposing the condition $\mathfrak{L}_{2} > \frac{\mathfrak{L}_{1} \cdot 2^{4 - \mu}}{\mu}$, we obtain
\begin{eqnarray}\label{relacaodosvarsigma}
\frac{\mathfrak{L}_{2}}{2} \mu |\bar{x} - \bar{y}|^{\mu - 1} \leq |\varsigma_{i}| \leq 2 \mathfrak{L}_{2} \mu |\bar{x} - \bar{y}|^{\mu - 1}, \quad i = 1,2.
\end{eqnarray}

Moreover, by the Ishii–Lions Lemma \ref{IshiiLions}, there exist matrices $\mathrm{X}, \mathrm{Y} \in \mathrm{Sym}(n)$ such that for any $\tau > 0$ satisfying $\tau \mathrm{Z} < \mathrm{Id}_n$, it holds that
\begin{equation}\label{ineqmatrices}
-\frac{2}{\tau} \begin{pmatrix} \mathrm{Id}_n & 0 \\ 0 & \mathrm{Id}_n \end{pmatrix}
\leq 
\begin{pmatrix} \mathrm{X} & 0 \\ 0 & -\mathrm{Y} \end{pmatrix}
\leq 
\begin{pmatrix} \mathrm{Z}^{\tau} & -\mathrm{Z} \\ -\mathrm{Z} & \mathrm{Z}^{\tau} \end{pmatrix},
\end{equation}
where
\begin{align*}
\mathrm{Z} &= \mathfrak{L}_{2} \omega''(|\bar{x} - \bar{y}|) \frac{\bar{x} - \bar{y}}{|\bar{x} - \bar{y}|} \otimes \frac{\bar{x} - \bar{y}}{|\bar{x} - \bar{y}|} 
+ \mathfrak{L}_{2} \omega'(|\bar{x} - \bar{y}|) \left( \mathrm{Id}_n - \frac{\bar{x} - \bar{y}}{|\bar{x} - \bar{y}|} \otimes \frac{\bar{x} - \bar{y}}{|\bar{x} - \bar{y}|} \right) \\
&= \mathfrak{L}_{2} \mu |\bar{x} - \bar{y}|^{\mu - 2} \left( \mathrm{Id}_n + (\mu - 2) \frac{\bar{x} - \bar{y}}{|\bar{x} - \bar{y}|} \otimes \frac{\bar{x} - \bar{y}}{|\bar{x} - \bar{y}|} \right),
\end{align*}

and
\begin{equation*}
\mathrm{Z}^{\tau} = (\mathrm{Id}_n - \tau \mathrm{Z})^{-1} \mathrm{Z}.
\end{equation*}

Choosing $\kappa = \left(2\mathfrak{L}_{2} \mu |\bar{x} - \bar{y}|^{\mu - 2}\right)^{-1}$ and applying the Sherman–Morrison formula (see \cite{SherMorr50}), we find
\begin{eqnarray*}
\mathrm{Z}^{\tau} = 2 \mathfrak{L}_{2} \mu |\bar{x} - \bar{y}|^{\mu - 2} \left( \mathrm{Id}_n - 2 \frac{2 - \mu}{3 - \mu} \frac{\bar{x} - \bar{y}}{|\bar{x} - \bar{y}|} \otimes \frac{\bar{x} - \bar{y}}{|\bar{x} - \bar{y}|} \right).
\end{eqnarray*}

Setting $\xi = \frac{\bar{x} - \bar{y}}{|\bar{x} - \bar{y}|}$, it is straightforward to verify that
\begin{eqnarray*}
\langle \mathrm{Z}^{\kappa} \xi, \xi \rangle = 2 \mathfrak{L}_{2} \mu |\bar{x} - \bar{y}|^{\mu - 2} \cdot \frac{\mu - 1}{3 - \mu} < 0,
\end{eqnarray*}
since $\mu \in (0,1)$. Furthermore, from inequality \eqref{ineqmatrices}, it follows that
\begin{eqnarray}\label{estmatricesXY}
\mathrm{X} \leq \mathrm{Y} \quad \text{and} \quad \max\{\|\mathrm{X}\|, \|\mathrm{Y}\|\} \leq 4 \mathfrak{L}_{2} \mu |\bar{x} - \bar{y}|^{\mu - 2}.
\end{eqnarray}

Defining \(\zeta_{i} = \varsigma_{i} + \vec{q}\), from inequality \eqref{relacaodosvarsigma} we deduce
\begin{eqnarray}\label{relacaodosvarrho}
\frac{\mathfrak{L}_{2}}{4} \mu |\bar{x} - \bar{y}|^{\mu - 1} \leq |\zeta_{i}| \leq 3 \mathfrak{L}_{2} \mu |\bar{x} - \bar{y}|^{\mu - 1}, \quad i = 1,2,
\end{eqnarray}
provided that
\begin{eqnarray}\label{conddeL2}
\mathfrak{L}_{2} \geq \frac{4 \left[ \left( \eta_{0} \|\varrho\|_{L^{\infty}(B_{1})}^{-1} \right)^{\frac{1}{\sigma - \theta}} + \eta_{0} \|\mathfrak{B}\|_{L^{\infty}(B_{1}; \mathbb{R}^{n})}^{-1} \right]}{\mu}.
\end{eqnarray}

In this context, we use the following notation for each $\eta \in \mathbb{R}^{n}$:
\begin{eqnarray*}
\mathfrak{A}(\eta) = \mathrm{Id}_n + (p - 2)\frac{\eta}{|\eta|} \otimes \frac{\eta}{|\eta|}.
\end{eqnarray*}
Observe that the eigenvalues of the matrix \( \mathfrak{A}(\eta) \) lie in the open interval \( (\min\{1, p - 1\}, \max\{1, p - 1\}) \). 

Using the information from the sub-/super-differentials, we obtain:
\[
|\zeta_{1}|^{\theta} \operatorname{tr}(\mathfrak{A}(\zeta_1)(\mathrm{X} + \mathfrak{L}_{1} \mathrm{Id}_n)) + \mathscr{H}(\zeta_{1}, \bar{x}) \geq \bar{f}(\bar{x}),
\]
\[
|\zeta_{2}|^{\theta} \operatorname{tr}(\mathfrak{A}(\zeta_2)(\mathrm{Y} - \mathfrak{L}_{1} \mathrm{Id}_n)) + \mathscr{H}(\zeta_{2}, \bar{y}) \leq \bar{f}(\bar{y}).
\]

In other words, by the structure of the function \(\psi\), we can write:
\[
\operatorname{tr}(\mathfrak{A}(\zeta_1)(\mathrm{X} + \mathfrak{L}_{1} \mathrm{Id}_n)) + |\zeta_{1}|^{-\theta} \mathscr{H}(\zeta_{1}, \bar{x}) \geq |\zeta_{1}|^{-\theta} \bar{f}(\bar{x}),
\]
\[
\operatorname{tr}(\mathfrak{A}(\zeta_2)(\mathrm{Y} - \mathfrak{L}_{1} \mathrm{Id}_n)) + |\zeta_{2}|^{-\theta} \mathscr{H}(\zeta_{2}, \bar{y}) \leq |\zeta_{2}|^{-\theta} \bar{f}(\bar{y}).
\]

Combining these two inequalities, rearranging the terms, and using the definition of the Hamiltonian \(\mathscr{H}\), we conclude:
\begin{eqnarray}
0 &\leq& \|\bar{f}\|_{L^{\infty}(\mathrm{B}_{1})} (|\zeta_{1}|^{-\theta} + |\zeta_{2}|^{-\theta}) + \|\mathfrak{B}\|_{L^{\infty}(\mathrm{B}_{1}; \mathbb{R}^{n})} (|\zeta_{1}| + |\zeta_{2}|) \nonumber \\
&& + \|\varrho\|_{L^{\infty}(\mathrm{B}_{1})} (|\zeta_{1}|^{\sigma - \theta} + |\zeta_{2}|^{\sigma - \theta}) + \mathfrak{L}_{1}[\operatorname{tr}(\mathfrak{A}(\zeta_{1})) + \operatorname{tr}(\mathfrak{A}(\zeta_{2}))] \nonumber \\
&& + \operatorname{tr}(\mathfrak{A}(\zeta_{1})(\mathrm{X} - \mathrm{Y})) + \operatorname{tr}((\mathfrak{A}(\zeta_{1}) - \mathfrak{A}(\zeta_{2}))\mathrm{Y}) \nonumber \\
&=:& \mathfrak{I}_{1} + \mathfrak{I}_{2} + \mathfrak{I}_{3} + \mathfrak{I}_{4} + \mathfrak{I}_{5} + \mathfrak{I}_{6}. \label{estholder1}
\end{eqnarray}

In the sequel, we will estimate each term \( \mathfrak{I}_j \), for \( j = 1, \dots, 6 \):

\begin{itemize}
\item({\bf Estimate of \(\mathfrak{I}_{1}\)})\\
Due to the choice of the lower bound for \(|\zeta_{i}|\), we have that
\begin{eqnarray*}
|\zeta_{i}|^{-\theta}\leq \left(\eta_{0}\|\varrho\|_{L^{\infty}(\mathrm{B}_{1})}\right)^{-\frac{\theta}{\sigma-\theta}}|\bar{x}-\bar{y}|^{\theta(1-\mu)}\leq 2^{\theta(1-\mu)},
\end{eqnarray*}
since \(\left(\eta_{0}\|\varrho\|_{L^{\infty}(\mathrm{B}_{1})}^{-1}\right)^{\frac{1}{\sigma-\theta}}\geq 1\) and \eqref{conddeL2} holds. Thus, we can conclude that
\begin{eqnarray}
\mathfrak{I}_{1}\leq 2^{1+\theta(1-\mu)}\|\bar{f}\|_{L^{\infty}(\mathrm{B}_{1})}\label{estI1}.
\end{eqnarray}

\item({\bf Estimate of \(\mathfrak{I}_{2}\)})\\
We can use the estimate \eqref{relacaodosvarrho} to conclude that
\begin{eqnarray}
\mathfrak{I}_{2}&\leq&2\|\mathfrak{B}\|_{L^{\infty}(\mathrm{B}_{1};\mathbb{R}^{n})}( 3\mathfrak{L}_{2}\mu|\bar{x}-\bar{y}|^{\mu-1})\nonumber\\
&\leq&6\|\mathfrak{B}\|_{L^{\infty}(\mathrm{B}_{1};\mathbb{R}^{n})}\mathfrak{L}_{2}\mu|\bar{x}-\bar{y}|^{\mu-2}|\bar{x}-\bar{y}|\nonumber\\
&\leq&12\|\mathfrak{B}\|_{L^{\infty}(\mathrm{B}_{1};\mathbb{R}^{n})}\mathfrak{L}_{2}\mu|\bar{x}-\bar{y}|^{\mu-2}.\label{estI2}
\end{eqnarray}

\item({\bf Estimate of \(\mathfrak{I}_{3}\)})\\
From the hypotheses, we have that
\begin{eqnarray}
\mathfrak{I}_{3}\leq 2\|\varrho\|_{L^{\infty}(B_{1})}( 3\mathfrak{L}_{2}\mu|\bar{x}-\bar{y}|^{\mu-1})^{\sigma-\theta}\label{estI3}
\end{eqnarray}

\item({\bf Estimate of \(\mathfrak{I}_{4}\)})\\
Since the eigenvalues of \( \mathfrak{A}(\zeta_i) \) belong to the open interval \( (\min\{1, p-1\}, \max\{1, p-1\}) \), we have that
\begin{eqnarray}
\operatorname{tr}(\mathfrak{A}(\zeta_{i}))\in(n\min\{1, p-1\},n\max\{1, p-1\}),\,\, i=1,2.\label{estimateAesp}
\end{eqnarray}
In particular,
\begin{eqnarray}
\mathfrak{I}_{4}\leq 2\mathfrak{L}_{1}n\max\{1,p-1\}\label{estI4}.
\end{eqnarray}

\item ({\bf Estimate of \(\mathfrak{I}_{5}\)})\\
We use the notation \( \lambda_i(\mathrm{M}) \) for the \( i \)-th eigenvalue of a matrix \( \mathrm{M} \). The matrix inequalities \eqref{ineqmatrices} applied to the vector \( (\xi, -\xi) \) for \( \xi = \frac{\bar{x} - \bar{y}}{|\bar{x} - \bar{y}|} \), give
\[
\langle(\mathrm{X}-\mathrm{Y})\xi,\xi\rangle\leq 4\langle \mathrm{Z}^{\tau}\xi,\xi\rangle\leq-8\mathfrak{L}_{2}\mu|\bar{x}-\bar{y}|^{\mu-2}\left(\frac{1-\mu}{3-\mu}\right)<0.
\]
Consequently,
\[
\lambda_{i}(\mathrm{X}-\mathrm{Y})\leq -8\mathfrak{L}_{2}\mu|\bar{x}-\bar{y}|^{\mu-2}\left(\frac{1-\mu}{3-\mu}\right),\,\, i=1,\ldots,n. 
\]
Hence,
\begin{eqnarray}
\mathfrak{I}_{5}&\leq&\sum_{i=1}^{n}\lambda_{i}(\mathfrak{A}(\zeta_{1}))\lambda_{i}(\mathrm{X}-\mathrm{Y})\nonumber\\
&\leq&\min\{1,p-1\}\min_{1\leq i\leq n}\lambda_{i}(\mathrm{X}-\mathrm{Y})\nonumber\\
&\leq&-8\mathfrak{L}_{2}\mu\left(\frac{1-\mu}{3-\mu}\right)\min\{1,p-1\}|\bar{x}-\bar{y}|^{\mu-2}\label{estI5}.
\end{eqnarray}
\item({\bf Estimate of \(\mathfrak{I}_{6}\)})\\
First, we know from \cite[Lemma 3.1]{Attou18} that
\[
\|\mathfrak{A}(\zeta_{1})-\mathfrak{A}(\zeta_{2})\|\leq \frac{16|p-2|\mathfrak{L}_{1}}{\mathfrak{L}_{2}\mu|\bar{x}-\bar{y}|^{\mu-1}}.
\]
Thus,
\begin{eqnarray}
\mathfrak{I}_{6} &\leq& n\|\mathfrak{A}(\zeta_{1})-\mathfrak{A}(\zeta_{2})\|\|\mathrm{Y}\|\nonumber\\
&\leq& n\frac{|p-2|\mathfrak{L}_{1}}{\mathfrak{L}_{2}\mu|\bar{x}-\bar{y}|^{\mu-1}}4\mathfrak{L}_{2}\mu|\bar{x}-\bar{y}|^{\mu-2}\nonumber\\
&=& 64n|p-2|\mathfrak{L}_{1}|\bar{x}-\bar{y}|^{-1}.\label{estI6}
\end{eqnarray}
\end{itemize}

Hence, by substituting \eqref{estI1}, \eqref{estI2}, \eqref{estI3}, \eqref{estI4}, \eqref{estI5}, and \eqref{estI6} into \eqref{estholder1} and rearranging the terms, we obtain
\begin{eqnarray}
0 &\leq& 2^{1+\theta(1-\mu)}\|\bar{f}\|_{L^{\infty}(\mathrm{B}_{1})} + |\bar{x}-\bar{y}|^{\mu-2}\mathfrak{L}_{2}\mu \Bigg[ -8\left(\frac{1-\mu}{3-\mu}\right)\min\{1,p-1\}\nonumber\\
&+& 12\|\mathfrak{B}\|_{L^{\infty}(\mathrm{B}_{1};\mathbb{R}^{n})} + \frac{2\|\varrho\|_{L^{\infty}(\mathrm{B}_{1})}}{(\mathfrak{L}_{2}\mu|\bar{x}-\bar{y}|^{\mu-1})^{1-\sigma+\theta}}\nonumber \\
&+& \frac{2n}{\mathfrak{L}_{2}\mu|\bar{x}-\bar{y}|^{\mu-2}}\mathfrak{L}_{1} + \frac{64n|p-2|}{\mathfrak{L}_{2}\mu|\bar{x}-\bar{y}|^{\mu-1}}\mathfrak{L}_{1} \Bigg]\label{contradicao}
\end{eqnarray}

Now, we choose \(\mathrm{C}_{\star}=\left(\frac{1-\mu}{12(3-\mu)}\right)\min\{1,p-1\}\) and \(\mathfrak{L}_{2}\) large enough such that
$$
\begin{cases}
\mathfrak{L}_{2}\mu|\bar{x}-\bar{y}|^{\mu-1} \geq \left(\frac{2\|\varrho\|_{L^{\infty}(\mathrm{B}_{1})}}{\min\{1,p-1\}\left(\frac{1-\mu}{3-\mu}\right)}\right)^{\frac{1}{1-\sigma+\theta}}\\
\mathfrak{L}_{2}\mu|\bar{x}-\bar{y}|^{\frac{\mu-2}{1+\theta}} \geq \frac{\mathfrak{L}_{1}^{\frac{1}{1+\theta}} + 8^{\frac{1}{1+\theta}}\|\bar{f}\|_{L^{\infty}(\mathrm{B}_{1})}^{\theta}}{\left(\min\{1,p-1\}\left(\frac{1-\mu}{3-\mu}\right)\right)^{\frac{1}{1+\theta}}} \\
\mathfrak{L}_{2}\mu|\bar{x}-\bar{y}|^{\mu-2} \geq \frac{64n|p-2|+2n}{\min\{1,p-1\}\left(\frac{1-\mu}{3-\mu}\right)}\mathfrak{L}_{1}
\end{cases}
$$
With these choices, we obtain in \eqref{contradicao} that
\begin{eqnarray*}
0 &\leq& -3\left(\frac{1-\mu}{3-\mu}\right)\min\{1,p-1\}\mathfrak{L}_{2}\mu|\bar{x}-\bar{y}|^{\mu-2} < 0,
\end{eqnarray*}
which yields a contradiction. Moreover, by the above choices, it follows that
\[
\mathfrak{L}_{2} \geq \mathrm{C}(n,p,\mu, \theta,\sigma)\left(\|u\|_{L^{\infty}(\mathrm{B}_{1})} + \|\bar{f}\|_{L^{\infty}(\mathrm{B}_{1})}^{\frac{1}{1+\theta}} + \|\varrho\|_{L^{\infty}(\mathrm{B}_{1})}^{\frac{1}{1+\theta-\sigma}}\right).
\]
Finally, as \(\phi \leq 0\) in \(\overline{\mathrm{B}_{\frac{15}{16}}}\times \overline{\mathrm{B}_{\frac{15}{16}}}\), in particular,
\[
|w(x)-w(y)| \leq \mathfrak{L}_{2} \omega(|x-y|) \leq \mathrm{C}\left(\|w\|_{L^{\infty}(\mathrm{B}_{1})} + \|\varrho\|_{L^{\infty}(\mathrm{B}_{1})}^{\frac{1}{1+\theta-\sigma}} + \|\bar{f}\|_{L^{\infty}(\mathrm{B}_{1})}^{\frac{1}{1+\theta}}\right) |x-y|^{\mu}, 
\]
for all \(x, y \in \mathrm{B}_{\frac{15}{16}}\).
\end{proof}

\begin{remark}
Upon carefully re-examining the estimates provided in the previous lemma, it becomes apparent that the proposed strategy fails to accommodate both the sublinear and superlinear cases within a unified framework. Given the current inability to establish H\"{o}lder regularity estimates in the superlinear regime, we plan to revisit this issue in a forthcoming study by exploring an alternative approach.
\end{remark}

We can utilize the previous result as a compactness tool to derive an approximation result for solutions to problem \eqref{problematransladado} via well-regularized profiles, specifically by functions of class \(\mathrm{C}_{\text{loc}}^{1,\alpha_{\mathrm{Hom}}^{(p)}}\). In summary, the following approximation result can be stated:

\begin{lemma}[{\bf Approximation Lemma}]\label{approxlemma}
Let \(w\) be a normalized viscosity solution to \eqref{problematransladado} in \(\mathrm{B}_{1}\) for some \(\vec{q} \in \mathbb{R}^{n}\). Given any \(\varepsilon > 0\), there exists a universal constant \(\eta_{0} > 0\) such that, if
\[
\max\left\{\|f\|_{L^{\infty}(\mathrm{B}_{1})},\ \|\mathfrak{B}\|_{L^{\infty}(\mathrm{B}_{1};\mathbb{R}^{n})}(1 + |\vec{q}|),\ \|\varrho\|_{L^{\infty}(\mathrm{B}_{1})}(1 + |\vec{q}|^{\sigma - \theta})\right\} \leq \eta,
\]
then there exists a function \(\mathfrak{h} \in \mathrm{C}^{1, \alpha_{\mathrm{Hom}}^{(p)}}(\mathrm{B}_{3/4})\) such that
\[
\sup_{\mathrm{B}_{\frac{1}{2}}} |w - \mathfrak{h}| \leq \varepsilon.
\]
\end{lemma}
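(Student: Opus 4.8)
The plan is to argue by contradiction through a compactness/stability argument, exploiting the Hölder estimate of Lemma~\ref{Holderest} together with the Cutting Lemma~\ref{CT-Lemma} and the homogeneous regularity of Lemma~\ref{Holder-Cont-hom} (equivalently Theorem~\ref{Thm-Equiv-Sol} and \eqref{EqHomogeneous}). Suppose the conclusion fails. Then there is an $\varepsilon_0 > 0$ and sequences $\vec{q}_j \in \mathbb{R}^n$, data $f_j$, $\mathfrak{B}_j$, $\varrho_j$ with
\[
\max\left\{\|f_j\|_{L^{\infty}(\mathrm{B}_1)},\ \|\mathfrak{B}_j\|_{L^{\infty}(\mathrm{B}_1;\mathbb{R}^n)}(1+|\vec{q}_j|),\ \|\varrho_j\|_{L^{\infty}(\mathrm{B}_1)}(1+|\vec{q}_j|^{\sigma-\theta})\right\} \leq \tfrac1j,
\]
and normalized viscosity solutions $w_j$ to the corresponding translated equation \eqref{problematransladado}, such that $\sup_{\mathrm{B}_{1/2}}|w_j - \mathfrak{h}| > \varepsilon_0$ for \emph{every} $\mathfrak{h} \in \mathrm{C}^{1,\alpha_{\mathrm{Hom}}^{(p)}}(\mathrm{B}_{3/4})$.

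First I would observe that for $j$ large the smallness hypotheses of Lemma~\ref{Holderest} are met (the threshold $\eta_0$ and $\mathrm{C}_{\star}$ there are universal), so $w_j \in \mathrm{C}^{0,\mu}(\mathrm{B}_{15/16})$ with a uniform seminorm bound depending only on $\|w_j\|_{L^{\infty}(\mathrm{B}_1)} \le 1$ (normalization) and on $\|f_j\|_{L^\infty}$, $\|\varrho_j\|_{L^\infty}$, both of which go to $0$. By Arzelà–Ascoli, up to a subsequence $w_j \to w_{\infty}$ uniformly in $\overline{\mathrm{B}_{7/8}}$. The next step is to identify the limiting equation. Here I split into two cases according to whether $\{|\vec{q}_j|\}$ is bounded or not. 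If $|\vec{q}_j| \to \infty$ (along a subsequence) one may normalize $\vec{q}_j/|\vec{q}_j| \to \mathbf{e}$ and, using the Cutting Lemma~\ref{CT-Lemma}, pass to the limit in the equation written in the form with $\mathfrak{A}(\nabla w_j + \vec{q}_j)$; since the Hamiltonian contributions are controlled by $\|\mathfrak{B}_j\|_{L^\infty}(1+|\vec{q}_j|)$ and $\|\varrho_j\|_{L^\infty}(1+|\vec{q}_j|^{\sigma-\theta})$ (both $\to 0$) and the right-hand side $\to 0$, the stability of viscosity solutions under uniform convergence (Crandall–Ishii–Lions) yields that $w_\infty$ solves $\mathrm{tr}(\mathfrak{A}_{\mathbf{e}} D^2 w_\infty) = 0$ with a constant coefficient uniformly elliptic matrix $\mathfrak{A}_{\mathbf{e}}$, hence $w_\infty$ is (after a linear change of variables) harmonic and belongs to $\mathrm{C}^{\infty}_{\mathrm{loc}}$. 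If $\{|\vec{q}_j|\}$ stays bounded, pass to $\vec{q}_j \to \vec{q}_\infty$; the limiting equation is $|D w_\infty + \vec{q}_\infty|^\theta(\Delta w_\infty - (p-2)\langle D^2 w_\infty (Dw_\infty+\vec{q}_\infty),(Dw_\infty+\vec{q}_\infty)\rangle/|Dw_\infty+\vec{q}_\infty|^2) = 0$, and by the Cutting Lemma~\ref{CT-Lemma} this means $w_\infty$ solves $-\Delta_p^{\mathrm{N}}(w_\infty + \vec{q}_\infty\cdot x) = 0$. By Theorem~\ref{Thm-Equiv-Sol}, $w_\infty + \vec{q}_\infty\cdot x$ is $p$-harmonic in $\mathrm{B}_{7/8}$, hence belongs to $\mathrm{C}^{1,\alpha_{\mathrm{Hom}}^{(p)}}_{\mathrm{loc}}$ by \eqref{EqHomogeneous}; consequently so does $w_\infty$ (subtracting the affine function $\vec{q}_\infty\cdot x$ does not affect $\mathrm{C}^{1,\alpha}$ regularity). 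In either case $w_\infty \in \mathrm{C}^{1,\alpha_{\mathrm{Hom}}^{(p)}}(\mathrm{B}_{3/4})$, with a norm bounded by a universal constant times $\|w_\infty\|_{L^\infty(\mathrm{B}_{7/8})} \le 1$.

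Finally, taking $\mathfrak{h} := w_\infty$ (restricted to $\mathrm{B}_{3/4}$), the uniform convergence $w_j \to w_\infty$ on $\overline{\mathrm{B}_{1/2}}$ gives $\sup_{\mathrm{B}_{1/2}}|w_j - \mathfrak{h}| \to 0 < \varepsilon_0$ for $j$ large, contradicting the standing assumption. This establishes the lemma. I expect the main obstacle to be the \emph{passage to the limit in the equation} — in particular handling the case $|\vec{q}_j|\to\infty$, where one must carefully rewrite \eqref{problematransladado} so that the degenerate factor $|\nabla w_j + \vec{q}_j|^\theta$ and the Hamiltonian terms are absorbed or shown to vanish in the limit (dividing through by $|\nabla w_j+\vec{q}_j|^\theta$ and checking that $|\zeta_j|^{-\theta}\mathscr{H}(\zeta_j,\cdot) \to 0$ uniformly, using $|\nabla w_j+\vec{q}_j|\sim|\vec{q}_j|$ on the relevant scale), and then invoking the stability theorem for viscosity solutions with the correct (possibly degenerate or cut) limiting operator. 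A secondary technical point is ensuring the Hölder estimate of Lemma~\ref{Holderest} applies uniformly along the sequence, which is immediate since its constants $\eta_0$, $\mathrm{C}_\star$ depend only on $n,p,\mu,\sigma,\theta$.
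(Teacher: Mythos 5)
Your proposal is correct and follows essentially the same route as the paper: contradiction, uniform H\"older compactness via Lemma~\ref{Holderest} and Arzel\`a--Ascoli, a dichotomy on the boundedness of $\vec{q}_j$, vanishing of the rescaled Hamiltonian and source in the limit, identification of the limiting equation (constant-coefficient linear in the unbounded case; normalized $p$-Laplacian via the Cutting Lemma in the bounded case), and the resulting regularity of $w_\infty$ to contradict the standing assumption. The only (harmless) deviation is that in the bounded case you invoke Theorem~\ref{Thm-Equiv-Sol} together with \eqref{EqHomogeneous} to conclude $w_\infty\in \mathrm{C}^{1,\alpha_{\mathrm{Hom}}^{(p)}}$, whereas the paper cites Lemma~\ref{Holder-Cont-hom} directly; your variant is if anything slightly cleaner, since it yields the regularity class actually required by the contradiction hypothesis.
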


\begin{proof}
Let us prove the result by \textit{Reductio ad Absurdum}. Suppose, for the sake of contradiction, that the statement fails. Then, there exist \(\varepsilon_{0} > 0\), sequences of functions \((w_{k})_{k \in \mathbb{N}}\), \((f_{k})_{k \in \mathbb{N}}\), \((\mathscr{H}_{k})_{k \in \mathbb{N}}\), and a sequence of vectors \((\vec{q}_{k})_{k \in \mathbb{N}}\) satisfying
\begin{equation}\label{soldeuk}
|\nabla w_{k} + \vec{q}_{k}|^{\theta} \Delta_{p}^{\mathrm{N}}(w_{k} + \langle \vec{q}_{k}, x \rangle) + \mathscr{H}_{k}(\nabla w_{k} + \vec{q}_{k}, x) = f_{k}(x) \quad \text{in } \mathrm{B}_{1},
\end{equation}
where:
\begin{itemize}
    \item[(i)] \(\mathscr{H}_{k}(x,\xi) = \langle \mathfrak{B}_{k}(x), \xi \rangle |\xi|^{\theta} + \varrho_{k}(x) |\xi|^{\sigma}\),
    \item[(ii)] \(\max\left\{\|f_{k}\|_{L^{\infty}(\mathrm{B}_{1})},\, \|\mathfrak{B}_{k}\|_{L^{\infty}(\mathrm{B}_{1};\mathbb{R}^{n})}(1 + |\vec{q}_{k}|),\, \|\varrho_{k}\|_{L^{\infty}(\mathrm{B}_{1})}(1 + |\vec{q}_{k}|^{\sigma - \theta})\right\} \leq \frac{1}{k}\),
    \item[(iii)] \(\|w_{k}\|_{L^{\infty}(\mathrm{B}_{1})} \leq 1\).
\end{itemize}

However,
\begin{equation}\label{contradition}
\sup_{\mathrm{B}_{1/2}} |w_{k} - \mathfrak{h}| > \varepsilon_{0}
\end{equation}
for all \(\mathfrak{h} \in \mathrm{C}^{1,\alpha_{\mathrm{Hom}}^{(p)}}(\mathrm{B}_{3/4})\).

From (ii), it follows that \(f_{k} \to 0\) in \(L^{\infty}\), and \(\|f_{k}\|_{L^{\infty}(\mathrm{B}_{1})} \leq 1\) for all \(k \in \mathbb{N}\). Moreover, by the Archimedean property, for sufficiently large \(k\), we have \(\frac{1}{k} \leq \min\{\eta_{0}, \mathrm{C}_{\star}\}\), where \(\eta_{0}\) and \(\mathrm{C}_{\star}\) are the constants appearing in the local Hölder regularity Lemma~\ref{Holderest}.

Therefore, by Lemma~\ref{Holderest}, the sequence \((w_{k})\) is precompact in the \(\mathrm{C}^{0}(\mathrm{B}_{7/8})\)-topology for sufficiently large \(k\). Hence, by the Arzelà–Ascoli Theorem (possibly after passing to a subsequence), we obtain that \(w_{k} \to w_{\infty}\) uniformly in \(\mathrm{B}_{7/8}\) for some limit function \(w_{\infty}\).

We now analyze the sequence \((\vec{q}_{k})_{k \in \mathbb{N}}\) in two distinct scenarios:

\begin{itemize}
\item \textbf{Case I:} \((\vec{q}_{k})_{k\in\mathbb{N}}\) is an unbounded sequence.\\
In this case, we claim that \(w_{\infty}\) is a viscosity solution to 
\[
\operatorname{tr}(\mathcal{A}D^{2}w_{\infty})=0 \quad \text{in } \mathrm{B}_{7/8},
\]
for some symmetric matrix \(\mathcal{A}\). To this end, consider a (relabeled) subsequence \((\vec{q}_{k})_{k\in\mathbb{N}}\) such that \(|\vec{q}_{k}|\geq k\) for all \(k\in\mathbb{N}\); in particular, \(\vec{q}_{k}\neq 0\). Define the normalized sequence \(\vec{e}_{k}=\frac{\vec{q}_{k}}{|\vec{q}_{k}|}\). Note that \(|\vec{e}_{k}|=1\) for all \(k\in\mathbb{N}\), and thus, up to a subsequence, \(\vec{e}_{k}\to \vec{e}_{\infty}\) with \(|\vec{e}_{\infty}|=1\). 

We claim that \(w_{\infty}\) solves
\begin{eqnarray}\label{soldeplaplacenorm}
\operatorname{tr}((\mathrm{Id}_{n}+(p-2)\vec{e}_{\infty}\otimes\vec{e}_{\infty}) D^{2}w_{\infty})=0 \quad \text{in } \mathrm{B}_{7/8}.
\end{eqnarray}
Indeed, we will prove only that \( w_{\infty} \) is a supersolution of \eqref{soldeplaplacenorm} in the viscosity sense, as the subsolution case follows analogously.

Let us fix \( x_{0} \in \mathrm{B}_{7/8} \) and consider the quadratic polynomial
\[
\mathfrak{P}(x)=w_{\infty}(x_{0})+\mathfrak{b}_{\infty}\cdot (x-x_{0})+\frac{1}{2}(x-x_0)^{t}\mathrm{M}_{\infty}(x-x_{0}).
\]
Note that \(\mathfrak{P}(x_{0})=w_{\infty}(x_{0})\). Suppose that \(\mathfrak{P}\) touches \(w_{\infty}\) from below at \(x_{0}\). We may assume without loss of generality that \( \mathfrak{b}_{\infty} \neq 0 \) (see \cite{KMP12}). We must verify that
\[
\operatorname{tr}((\mathrm{Id}_{n}+(p-2)\vec{e}_{\infty}\otimes\vec{e}_{\infty}) \mathrm{M}_{\infty})\leq 0.
\]

For fixed \(r \ll 1\), let \(x_{k} \in \mathrm{B}_{r}\) be such that
\begin{eqnarray}
\mathfrak{P}(x_{k}) - w_{k}(x_{k}) = \max\{\mathfrak{P}(x) - w_{k}(x);\, x \in \mathrm{B}_{r}\}.\label{seqxk}
\end{eqnarray}
Furthermore, we may choose \(\vec{q}_{k}\) such that
\[
\max\{1,|\mathfrak{b}_{\infty}|\} < |\vec{q}_{k}|, \quad \forall k \geq k_{0}.
\]

Since \(w_{k}\) is a viscosity solution to \eqref{soldeuk}, and using condition \(\mathrm{(ii)}\), we obtain
\begin{eqnarray}\label{convpontula1}
\left|\frac{\mathfrak{b}_{\infty}}{|\vec{q}_{k}|}+\vec{e}_{k}\right|^{\theta}\Delta_{p}^{\mathrm{N}}\mathfrak{P}(x_{k})+\frac{\mathscr{H}_{k}(\mathfrak{b}_{\infty}+\vec{q}_{k},x_{k})}{|\vec{q}_{k}|^{\theta}}\leq \frac{f_{k}(x_{k})}{|\vec{q}_{k}|^{\theta}} \leq \|f_{k}\|_{L^{\infty}(\mathrm{B}_{1})}.
\end{eqnarray}
Moreover, we estimate
\begin{eqnarray*}
\left|\frac{\mathscr{H}_{k}(\mathfrak{b}_{\infty}+\vec{q}_{k},x_{k})}{|\vec{q}_{k}|^{\theta}}\right| 
&\leq& \|\mathfrak{B}_{k}\|_{L^{\infty}(\mathrm{B}_{1};\mathbb{R}^{n})} \left|\frac{\mathfrak{b}_{\infty}}{|\vec{q}_{k}|}+\vec{e}_{k}\right|^{1+\theta}|\vec{q}_{k}| \\
&+& \|\varrho_{k}\|_{L^{\infty}(\mathrm{B}_{1})} \left|\frac{\mathfrak{b}_{\infty}}{|\vec{q}_{k}|}+\vec{e}_{k}\right|^{\sigma} |\vec{q}_{k}|^{\sigma - \theta} \\
&\leq& \frac{1}{k} \left( \left|\frac{\mathfrak{b}_{\infty}}{|\vec{q}_{k}|}+\vec{e}_{k}\right|^{1+\theta} + \left|\frac{\mathfrak{b}_{\infty}}{|\vec{q}_{k}|}+\vec{e}_{k}\right|^{\sigma} \right) \\
&\leq& \frac{1}{k} \left[2^{\theta} \left( \left( \frac{|\mathfrak{b}_{\infty}|}{|\vec{q}_{k}|} \right)^{1+\theta} + 1 \right) + 2^{\sigma - 1} \left( \left( \frac{|\mathfrak{b}_{\infty}|}{|\vec{q}_{k}|} \right)^{\sigma} + 1 \right) \right] \\
&\leq& \frac{2^{2+\theta}}{k}, \quad \forall k \geq k_{0},
\end{eqnarray*}
which implies
\[
\left|\frac{\mathscr{H}_{k}(\mathfrak{b}_{\infty}+\vec{q}_{k},x_{k})}{|\vec{q}_{k}|^{\theta}}\right| \to 0.
\]
Therefore, letting \(k \to \infty\) in \eqref{convpontula1}, we conclude that
\[
\operatorname{tr}(\mathrm{M}_{\infty}) + (p - 2) \left\langle \mathrm{M}_{\infty} \frac{\vec{e}_{\infty}}{|\vec{e}_{\infty}|}, \frac{\vec{e}_{\infty}}{|\vec{e}_{\infty}|} \right\rangle \leq 0,
\]
that is,
\[
\operatorname{tr} \left( (\mathrm{Id}_{n} + (p - 2)\vec{e}_{\infty} \otimes \vec{e}_{\infty}) \mathrm{M}_{\infty} \right) \leq 0,
\]
since \(|\vec{e}_{\infty}| = 1\). Hence, \(w_{\infty}\) is a viscosity solution to \eqref{soldeplaplacenorm}, with
\[
\mathcal{A} = \mathrm{Id}_{n} + (p - 2)\vec{e}_{\infty} \otimes \vec{e}_{\infty},
\]
which establishes the claim. Up to a rotation, we may assume that \(w_{\infty}\) is a harmonic function. In particular, \(w_{\infty} \in \mathrm{C}^{1,\alpha_{\mathrm{Hom}}^{(p)}}\), since it is actually of class \(\mathrm{C}^{2,\alpha}\) for some \(\alpha \in (0,1)\), by the classical Evans–Krylov Theorem (see Caffarelli–Cabré's book \cite{CC}). This contradicts \eqref{contradition} as \(k \to \infty\).
\item \textbf{Case II:} \((\vec{q}_{k})_{k\in\mathbb{N}}\) is a bounded sequence.\\
In this case, passing to a subsequence if necessary, we may assume that \(\vec{q}_{k}\to \vec{q}_{\infty}\). Moreover, the hypothesis \(\mathrm{(ii)}\) ensures that \(|\mathfrak{B}_{k}|\to 0\) uniformly in \(\mathrm{B}_{1}\), and \(\varrho_{k}\to 0\). Thus, by compactness and stability arguments (cf. \cite[Appendix]{Attou18}), we deduce that \(w_{\infty}\) satisfies
\[
|\nabla w_{\infty}+\vec{q}_{\infty}|^{\theta}\Delta_{p}^{\mathrm{N}}(w_{\infty}+\langle \vec{q}_{\infty},x\rangle)=0\,\,\, \mathrm{in}\,\,\, \mathrm{B}_{\frac{7}{8}}.
\]
Hence, by the Cutting Lemma (Lemma \ref{CT-Lemma}), we conclude that \(w_{\infty}\) solves
\[
\Delta_{p}^{\mathrm{N}}(w_{\infty}+\langle \vec{q}_{\infty},x\rangle)=0\,\,\, \mathrm{in}\,\,\, \mathrm{B}_{\frac{7}{8}},
\]
and therefore, by Lemma \ref{Holder-Cont-hom}, there exists \(\bar{\beta}\in (0,1)\) such that \(w_{\infty}\in \mathrm{C}^{1,\bar{\beta}}\). This again contradicts \eqref{contradition} as \(k\to \infty\).
\end{itemize}
This concludes the proof.
\end{proof}

\section{Gradient estimates: Proof of Theorem \ref{Thm1.1}}\label{Section5}

In this section, using the preliminary tools developed in Section \ref{Section4}, we present a proof of Theorem \ref{Thm1.1}.
\begin{proposition}\label{Prop4.1}
Let \(u \in \mathrm{C}^0(\mathrm{B}_1)\) be a normalized viscosity solution to problem \eqref{Problem}. Given a value \(\alpha\) such that
\[
\alpha \in \left(0, \alpha_{\mathrm{Hom}}^{(p)}\right) \cap \left(0, \frac{1}{1+\theta}\right],
\]
there exist universal constants \(\rho \in (0, 1/2]\) and \(\delta_0 > 0\) such that, if
\[
\max\left\{ \|f\|_{L^\infty(\mathrm{B}_1)}, \|\mathfrak{B}\|_{L^\infty(\mathrm{B}_1; \mathbb{R}^n)}, \|\varrho\|_{L^\infty(\mathrm{B}_1)} \right\} \leq \delta_0,
\]
then \(u\) is \(\mathrm{C}^{1,\alpha}\) at the origin. More precisely, there exists a positive constant \(C\) such that the affine function \(\ell(x) = \mathfrak{a} + \langle \mathfrak{b}, x \rangle\) satisfies
\[
\sup_{\mathrm{B}_r} |u - \ell| \leq C r^{1 + \alpha}, \quad \forall r \in (0, \rho].
\]
\end{proposition}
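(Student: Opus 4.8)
The plan is to run a geometric iteration of the Approximation Lemma \ref{approxlemma}, in the spirit of \cite{daSNorb21}. Concretely, I will prove by induction on $k\in\mathbb{N}\cup\{0\}$ that there are universal constants $\rho\in(0,1/2]$, $\delta_0>0$ and affine functions $\ell_k(x)=\mathfrak{a}_k+\langle\mathfrak{b}_k,x\rangle$ (with $\ell_0\equiv 0$) such that, whenever $\max\{\|f\|_{L^\infty(\mathrm{B}_1)},\|\mathfrak{B}\|_{L^\infty(\mathrm{B}_1;\mathbb{R}^n)},\|\varrho\|_{L^\infty(\mathrm{B}_1)}\}\le\delta_0$,
\[
\sup_{\mathrm{B}_{\rho^k}}|u-\ell_k|\le \rho^{k(1+\alpha)},\qquad |\mathfrak{a}_{k+1}-\mathfrak{a}_k|\le 2\rho^{k(1+\alpha)},\qquad |\mathfrak{b}_{k+1}-\mathfrak{b}_k|\le \mathrm{C}_0\rho^{k\alpha},
\]
where $\mathrm{C}_0=\mathrm{C}_0(n,p)$ is the universal constant for which the comparison functions $\mathfrak{h}$ delivered by Lemma \ref{approxlemma} satisfy $\|\mathfrak{h}\|_{\mathrm{C}^{1,\alpha_{\mathrm{Hom}}^{(p)}}(\mathrm{B}_{1/2})}\le \mathrm{C}_0$ (this follows from $\|\mathfrak{h}\|_{L^\infty}\le 2$ and the interior estimate \eqref{EqHomogeneous}). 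Granting this, the last two inequalities show that $\{\mathfrak{a}_k\}$ and $\{\mathfrak{b}_k\}$ are Cauchy; denoting their limits by $\mathfrak{a}$, $\mathfrak{b}$ and setting $\ell(x)=\mathfrak{a}+\langle\mathfrak{b},x\rangle$ one has $|\mathfrak{a}-\mathfrak{a}_k|+|\mathfrak{b}-\mathfrak{b}_k|\le \mathrm{C}\rho^{k\alpha}$, and interpolating between the dyadic scales $\rho^{k+1}<r\le\rho^k$ yields $\sup_{\mathrm{B}_r}|u-\ell|\le \mathrm{C}r^{1+\alpha}$ for every $r\in(0,\rho]$, which is the assertion.

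The constants are fixed in the following order, with no circularity. Given $\alpha\in(0,\alpha_{\mathrm{Hom}}^{(p)})\cap(0,\tfrac{1}{1+\theta}]$, first choose $\rho\in(0,1/2]$ so small that $\mathrm{C}_0\rho^{\alpha_{\mathrm{Hom}}^{(p)}-\alpha}\le \tfrac12$ (possible since $\alpha<\alpha_{\mathrm{Hom}}^{(p)}$ and $\mathrm{C}_0$ is universal); then set $\varepsilon:=\tfrac12\rho^{1+\alpha}$ and let $\eta=\eta(\varepsilon)>0$ be the constant provided by Lemma \ref{approxlemma}. Since $|\mathfrak{b}_{k+1}-\mathfrak{b}_k|\le \mathrm{C}_0\rho^{k\alpha}$, the accumulated slopes stay bounded by $M:=\mathrm{C}_0\big(1+\tfrac{\rho^\alpha}{1-\rho^\alpha}\big)$, a universal quantity; finally take $\delta_0:=\eta\,\big(1+\max\{M,M^{\sigma-\theta}\}\big)^{-1}$.

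For the inductive step, assume $\sup_{\mathrm{B}_{\rho^k}}|u-\ell_k|\le\rho^{k(1+\alpha)}$ and set
\[
w_k(x):=\frac{(u-\ell_k)(\rho^k x)}{\rho^{k(1+\alpha)}},\qquad x\in\mathrm{B}_1,
\]
so that $\|w_k\|_{L^\infty(\mathrm{B}_1)}\le 1$. From $\nabla u(\rho^k x)=\mathfrak{b}_k+\rho^{k\alpha}\nabla w_k(x)$, $D^2u(\rho^k x)=\rho^{k(\alpha-1)}D^2w_k(x)$, the $0$-homogeneity of $\mathfrak{A}(\xi)=\mathrm{Id}_n+(p-2)\tfrac{\xi}{|\xi|}\otimes\tfrac{\xi}{|\xi|}$, and division of \eqref{Problem} (evaluated at $y=\rho^k x$) by $\rho^{k\alpha(1+\theta)-k}$, one checks that $w_k$ solves the translated problem \eqref{problematransladado} with $\vec q=\vec q_k:=\rho^{-k\alpha}\mathfrak{b}_k$, coefficients $\mathfrak{B}_k(x)=\rho^k\mathfrak{B}(\rho^k x)$ and $\varrho_k(x)=\rho^{k(1-\alpha(1+\theta-\sigma))}\varrho(\rho^k x)$, and right-hand side $\bar f_k(x)=\rho^{k(1-\alpha(1+\theta))}f(\rho^k x)$. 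The crucial point — and the main obstacle — is to verify that the smallness hypothesis of Lemma \ref{approxlemma} propagates along the iteration despite the fact that $|\vec q_k|$ blows up like $\rho^{-k\alpha}$: using $|\vec q_k|\le M\rho^{-k\alpha}$, the smallness of the original data, and the elementary observation that the exponents $1-\alpha(1+\theta)$, $1-\alpha$ and $1-\alpha(1+\theta-\sigma)$ are all nonnegative — which is exactly where the assumptions $\alpha\le\tfrac{1}{1+\theta}$ and $\sigma\in(\theta,1+\theta)$ enter, together with $\theta>0$ — one obtains $\|\bar f_k\|_{L^\infty(\mathrm{B}_1)}\le\delta_0$, $\|\mathfrak{B}_k\|_{L^\infty(\mathrm{B}_1)}(1+|\vec q_k|)\le\delta_0(1+M)$ and $\|\varrho_k\|_{L^\infty(\mathrm{B}_1)}(1+|\vec q_k|^{\sigma-\theta})\le\delta_0(1+M^{\sigma-\theta})$, hence the maximum of these three quantities is $\le\eta$ by the choice of $\delta_0$. (For $k=0$ this reduces to the hypothesis on $f,\mathfrak{B},\varrho$, and $\ell_0\equiv 0$, $\|u\|_{L^\infty(\mathrm{B}_1)}\le 1$ gives the base case.)

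Lemma \ref{approxlemma} then produces $\mathfrak{h}_k\in\mathrm{C}^{1,\alpha_{\mathrm{Hom}}^{(p)}}(\mathrm{B}_{3/4})$ with $\sup_{\mathrm{B}_{1/2}}|w_k-\mathfrak{h}_k|\le\varepsilon$ and, by \eqref{EqHomogeneous}, $[\nabla\mathfrak{h}_k]_{\mathrm{C}^{0,\alpha_{\mathrm{Hom}}^{(p)}}(\mathrm{B}_{1/2})}\le \mathrm{C}_0$. Define $\mathfrak{a}_{k+1}:=\mathfrak{a}_k+\rho^{k(1+\alpha)}\mathfrak{h}_k(0)$ and $\mathfrak{b}_{k+1}:=\mathfrak{b}_k+\rho^{k\alpha}\nabla\mathfrak{h}_k(0)$, so that $\ell_{k+1}(\rho^k x)=\ell_k(\rho^k x)+\rho^{k(1+\alpha)}[\mathfrak{h}_k(0)+\langle\nabla\mathfrak{h}_k(0),x\rangle]$. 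Combining the first-order Taylor estimate $\sup_{\mathrm{B}_\rho}|\mathfrak{h}_k(x)-\mathfrak{h}_k(0)-\langle\nabla\mathfrak{h}_k(0),x\rangle|\le \mathrm{C}_0\rho^{1+\alpha_{\mathrm{Hom}}^{(p)}}\le\tfrac12\rho^{1+\alpha}$ with $\sup_{\mathrm{B}_\rho}|w_k-\mathfrak{h}_k|\le\varepsilon=\tfrac12\rho^{1+\alpha}$ gives $\sup_{\mathrm{B}_\rho}|w_k(x)-\mathfrak{h}_k(0)-\langle\nabla\mathfrak{h}_k(0),x\rangle|\le\rho^{1+\alpha}$; rescaling back (putting $y=\rho^k x$, $y\in\mathrm{B}_{\rho^{k+1}}$) this is precisely $\sup_{\mathrm{B}_{\rho^{k+1}}}|u-\ell_{k+1}|\le\rho^{(k+1)(1+\alpha)}$, while $|\mathfrak{a}_{k+1}-\mathfrak{a}_k|=\rho^{k(1+\alpha)}|\mathfrak{h}_k(0)|\le 2\rho^{k(1+\alpha)}$ and $|\mathfrak{b}_{k+1}-\mathfrak{b}_k|=\rho^{k\alpha}|\nabla\mathfrak{h}_k(0)|\le \mathrm{C}_0\rho^{k\alpha}$, closing the induction and hence the proof. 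The delicate aspect throughout is the bookkeeping of powers of $\rho$ in the rescaled Hamiltonian, which is dictated by the degenerate weight $|\nabla u|^\theta$ and is the reason the admissible range of $\alpha$ is capped at $\tfrac{1}{1+\theta}$.
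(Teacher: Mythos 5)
Your proposal is correct and follows essentially the same route as the paper's proof: the same rescaling $w_k=(u-\ell_k)(\rho^k\cdot)/\rho^{k(1+\alpha)}$ with $\vec q_k=\rho^{-k\alpha}\mathfrak{b}_k$, the same propagation of the smallness hypotheses of Lemma \ref{approxlemma} via the bound on the accumulated slopes and the nonnegativity of the exponents $1-\alpha(1+\theta)$, $1-\alpha$, $1-\alpha(1+\theta-\sigma)$, and the same choice of $\rho$, $\varepsilon$, $\delta_0$ followed by the Cauchy-sequence and dyadic-interpolation conclusion. No gaps.
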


\begin{proof}
First, we consider the universal constant \(\mathrm{C}_p>0\) from \(\mathrm{C}^{1,\alpha_{\mathrm{Hom}}^{(p)}}\)-regularity of the class \(\mathbb{H}_{n}^{(p)}\). Next, we set \(\varepsilon > 0\), which will be chosen \textit{a posteriori}. We claim that there exists a sequence of affine functions \(\ell_k(x) = \mathfrak{a}_k + \langle \mathfrak{b}_k, x \rangle\) such that for all \(k \geq 0\):
\begin{itemize}
\item[(i)] \(\displaystyle \sup_{\mathrm{B}_{\rho_k}} |u - \ell_k| \leq \rho^{k(1+\alpha)}\),
\item[(ii)] \(|\mathfrak{a}_k - \mathfrak{a}_{k-1}| + \rho^{k-1} |\mathfrak{b}_k - \mathfrak{b}_{k-1}| \leq \mathrm{C}_p \rho^{(k-1)(1+\alpha)}\),
\end{itemize}
where 
\[
\rho = \min\left\{ \left( \frac{1}{2 \mathrm{C}_p} \right)^{\frac{1}{\alpha_{\mathrm{Hom}}^{(p)} - \alpha}}, \frac{1}{2} \right\}.
\]
Indeed, to this end, let us choose \(\varepsilon = \frac{1}{2} \rho^{1+\alpha}\), which determines the existence of a constant \(\eta > 0\) in the Approximation Lemma \ref{approxlemma}. From these constants, we can define the constant \(\delta_0\) as
\[
\delta_0 = \max\left\{ 1 + \left( \frac{\mathrm{C}_p}{1 - \rho^{\alpha}} \right), 1 + \left( \frac{\mathrm{C}_p}{1 - \rho^{\alpha}} \right)^{\sigma - \theta} \right\}^{-1} \eta.
\]
The proof will be carried out by induction. Defining \(\ell_{-1} = \ell_0 = 0\), we see that the base case \(k = 0\) is clearly satisfied, since \(u\) is a normalized solution. Now, assuming by the induction hypothesis that it holds for some \(k \in \mathbb{N}\), we define the following auxiliary function
\[
u_k(x) = \frac{(u - \ell_k)(\rho^k x)}{\rho^{k(1+\alpha)}}, \quad x \in \mathrm{B}_1.
\]
It is possible to verify that \(u_k\) is a normalized (by the induction hypothesis) viscosity solution to
\begin{eqnarray*}
|\nabla u_k + \vec{q}_k|^\theta \left[ \Delta u_k + (p - 2) \left\langle D^2 u_k \frac{\nabla u_k + \vec{q}_k}{|\nabla u_k + \vec{q}_k|}, \frac{\nabla u_k + \vec{q}_k}{|\nabla u_k + \vec{q}_k|} \right\rangle \right] \\
+ \mathscr{H}_k(\nabla u_k + \vec{q}_k, x) = f_k(x) \quad \text{in} \quad \mathrm{B}_1,
\end{eqnarray*}
where 
\begin{itemize}
\item \(\vec{q}_k = \frac{\vec{\mathfrak{b}}_k}{\rho^{k\alpha}}\),
\item \(\mathscr{H}_k(\xi, x) = \langle \mathfrak{B}_k(x), \xi \rangle |\xi|^\theta + \varrho_k(x) |\xi|^\sigma\),
\item \(\mathfrak{B}_k(x) = \rho^k \mathfrak{B}(\rho^k x)\) and \(\varrho_k(x) = \rho^{k(1 - \alpha(1 + \theta - \sigma))} \varrho(\rho^k x)\),
\item \(f_k(x) = \rho^{k(1 - \alpha(1 + \theta))} f(\rho^k x)\).
\end{itemize}

Let us verify that \( u_k \) satisfies the conditions of the Approximation Lemma \ref{approxlemma}. Observe that, by the choice of \(\delta_0\), it follows that
\begin{eqnarray}\label{estdeBk}
\|\mathfrak{B}_k\|_{L^\infty(\mathrm{B}_1; \mathbb{R}^n)}(1 + |\vec{q}_k|) &=& \|\mathfrak{B}\|_{L^\infty(\mathrm{B}_{\rho^k}; \mathbb{R}^n)} \rho^k \left( 1 + \rho^{-k\alpha} |\vec{b}_k| \right) \nonumber \\
&\leq& \|\mathfrak{B}\|_{L^\infty(\mathrm{B}_1; \mathbb{R}^n)} \rho^k \left( 1 + \rho^{-k\alpha} \sum_{i=1}^{k} |\vec{b}_j - \vec{b}_{j-1}| \right) \nonumber \\
&\leq& \|\mathfrak{B}\|_{L^\infty(\mathrm{B}_1; \mathbb{R}^n)} \left( \rho^k + \rho^{k(1-\alpha)} \frac{\mathrm{C}_p}{1 - \rho^\alpha} \right) \nonumber \\
&\leq& \|\mathfrak{B}\|_{L^\infty(\mathrm{B}_1; \mathbb{R}^n)} \left( 1 + \frac{\mathrm{C}_p}{1 - \rho^\alpha} \right) \nonumber \\
&\leq& \delta_0.
\end{eqnarray}
Similarly, we have
\begin{eqnarray}\label{estdevarrhok}
\|\varrho_k\|_{L^\infty(\mathrm{B}_1)}(1 + |\vec{q}_k|^{\sigma - \theta}) &\leq& \|\varrho\|_{L^\infty(\mathrm{B}_1)} \rho^{k(1 - \alpha(1 + \theta - \sigma))} \nonumber \\
&+& \|\varrho\|_{L^\infty(\mathrm{B}_1)} \rho^{k(1 - \alpha)} \left( \sum_{i=1}^{k} |\vec{b}_j - \vec{b}_{j-1}| \right)^{\sigma - \theta} \nonumber \\
&\leq& \|\varrho\|_{L^\infty(\mathrm{B}_1)} \left( 1 + \left( \frac{\mathrm{C}_p}{1 - \rho^\alpha} \right)^{\sigma - \theta} \right) \nonumber \\
&\leq& \eta.
\end{eqnarray}
Moreover, since \(\|f_k\|_{L^\infty(\mathrm{B}_1)} \leq \rho^{k(1 - \alpha(1 + \theta))} \|f\|_{L^\infty(\mathrm{B}_1)}\) (because \(\rho \in (0, 1)\) and \(\alpha \leq \frac{1}{1 + \theta}\)), we have, by these estimates, along with \eqref{estdeBk} and \eqref{estdevarrhok}, that
\begin{eqnarray*}
\max\left\{ \|\mathfrak{B}_k\|_{L^\infty(\mathrm{B}_1; \mathbb{R}^n)}(1 + |\vec{q}_k|), \|\varrho_k\|_{L^\infty(\mathrm{B}_1)}(1 + |\vec{q}_k|^{\sigma - \theta}), \|f_k\|_{L^\infty(\mathrm{B}_1)} \right\} \leq \eta.
\end{eqnarray*}
Hence, we can invoke the Approximation Lemma \ref{approxlemma} to conclude that there exists a function \(\mathfrak{h} \in \mathrm{C}^{1, \alpha_{\mathrm{Hom}}^{(p)}}(\mathrm{B}_{3/4})\) such that
\begin{eqnarray}\label{estdeukh}
\sup_{\mathrm{B}_{1/2}} |u_k - \mathfrak{h}| \leq \varepsilon.
\end{eqnarray}
In this case, we know that
\[
\|\mathfrak{h}\|_{\mathrm{C}^{1, \alpha_{\mathrm{H}}^{(p)}}(\mathrm{B}_{3/4})} \leq \mathrm{C}_p.
\]
In particular, defining \(\mathfrak{a}_* = \mathfrak{h}(0)\) and \(\vec{\mathfrak{b}}_* = \nabla \mathfrak{h}(0)\), we have that
\begin{eqnarray}\label{estdehl}
\sup_{\mathrm{B}_r} |\mathfrak{h} - \ell_*| \leq \mathrm{C}_p r^{1 + \alpha_{\mathrm{Hom}}^{(p)}}, \quad \forall r \in (0, 1/2],
\end{eqnarray}
where \(\ell_*(x) = \mathfrak{a}_* + \langle \vec{\mathfrak{b}}_*, x \rangle\), and
\begin{eqnarray}\label{estdoscoef}
|\mathfrak{a}_*| + |\vec{\mathfrak{b}}_*| \leq \mathrm{C}_p.
\end{eqnarray}

Now, define \(\mathfrak{a}_{k+1} = \mathfrak{a}_{k} + \rho^{k(1+\alpha)} \vec{\mathfrak{a}}_*\) and \(\vec{\mathfrak{b}}_{k+1} = \vec{\mathfrak{b}}_{k} + \rho^{k\alpha} \vec{\mathfrak{b}}_*\). This defines the affine function \(\ell_{k+1}\). By the estimate \eqref{estdoscoef} and the induction hypothesis, we have
\begin{eqnarray*}
|\mathfrak{a}_{k+1} - \mathfrak{a}_k| + \rho^k |\vec{\mathfrak{b}}_{k+1} - \vec{\mathfrak{b}}_k| &=& \rho^{k(1+\alpha)} |\mathfrak{a}_*| + \rho^k \rho^{k\alpha} |\vec{\mathfrak{b}}_*| \leq \rho^{k(1+\alpha)} \mathrm{C}_p.
\end{eqnarray*}
This proves that the affine function \(\ell_{k+1}\) satisfies condition \((ii)\). For condition \((i)\), we note the estimates \eqref{estdeukh}, \eqref{estdehl}, and the definition of the affine profile \(\ell_{k+1}\), which give
\begin{eqnarray*}
\sup_{\mathrm{B}_\rho} |u_k - \ell_*| &\leq& \sup_{\mathrm{B}_\rho} |u_k - \mathfrak{h}| + \sup_{\mathrm{B}_\rho} |h - \ell_*| \nonumber \\
&\leq& \varepsilon + \mathrm{C}_1 \rho^{1 + \alpha_{\mathrm{Hom}}^{(p)}} \nonumber \\
&\leq& \frac{1}{2} \rho^{1 + \alpha} + \left( \frac{1}{2 \rho^{\alpha_{\mathrm{Hom}}^{(p)} - \alpha}} \right) \rho^{1 + \alpha_{\mathrm{Hom}}^{(p)}} \nonumber \\
&=& \rho^{1 + \alpha}.
\end{eqnarray*}
Therefore, the last estimate implies
\begin{eqnarray*}
\sup_{\mathrm{B}_{\rho^{k+1}}} |u - \ell_{k+1}| = \rho^{k(1+\alpha)} \sup_{\mathrm{B}_\rho} |u_k - \ell_*| \leq \rho^{(k+1)(1+\alpha)},
\end{eqnarray*}
which proves the claim.

Now, by the existence of a sequence of affine functions satisfying conditions \((i)\) and \((ii)\), in particular, the second condition implies that the sequences \((\mathfrak{a}_k)_{k \in \mathbb{N}}\) and \((\vec{\mathfrak{b}}_k)_{k \in \mathbb{N}}\) are Cauchy sequences in \(\mathbb{R}\) and \(\mathbb{R}^n\), respectively. Hence, there exist
\[
\mathfrak{a}_\infty = \lim_{k \to \infty} \mathfrak{a}_k \quad \text{and} \quad \vec{\mathfrak{b}}_\infty = \lim_{k \to \infty} \vec{\mathfrak{b}}_k,
\]
with the following rates of convergence:
\begin{eqnarray}
|\mathfrak{a}_k - \mathfrak{a}_\infty| &\leq& \frac{\mathrm{C}_p}{1 - \rho^{1 + \alpha}} \rho^{k(1 + \alpha)}, \quad \text{and} \quad |\vec{\mathfrak{b}}_k - \vec{\mathfrak{b}}_\infty| \leq \frac{\mathrm{C}_p}{1 - \rho^\alpha} \rho^{k\alpha}. \label{orderofconvergence}
\end{eqnarray}

As a result, the affine function \(\ell(x) = \mathfrak{a}_\infty + \langle \vec{\mathfrak{b}}_\infty, x \rangle\) is well-defined.

Finally, given \(r \in (0, \rho]\), we can find \(k \in \mathbb{N}\) such that \(\rho^{k+1} < r \leq \rho^k\). Then, using condition \((i)\) and the estimate \eqref{orderofconvergence}, we conclude that
\begin{eqnarray*}
\sup_{\mathrm{B}_r} |u - \ell| &\leq& \sup_{\mathrm{B}_r} |u - \ell_k| + \sup_{\mathrm{B}_r} |\ell_k - \ell| \\
&\leq& \sup_{\mathrm{B}_{\rho^k}} |u - \ell_k| + \sup_{x \in \mathrm{B}_r} \left( |\mathfrak{a}_k - \mathfrak{a}_\infty| + |x| |\vec{\mathfrak{b}}_k - \vec{\mathfrak{b}}_\infty| \right) \\
&\leq& \rho^{k(1 + \alpha)} + \frac{\mathrm{C}_p}{1 - \rho^{1 + \alpha}} \rho^{k(1 + \alpha)} + r \frac{\mathrm{C}_p}{1 - \rho^\alpha} \rho^{k \alpha} \\
&\leq& \frac{1}{\rho^{1 + \alpha}} \left( 1 + \frac{\mathrm{C}_p}{1 - \rho^{1 + \alpha}} + \frac{\mathrm{C}_p}{1 - \rho^\alpha} \right) r^{1 + \alpha} \\
&\leq& \mathrm{C} r^{1 + \alpha},
\end{eqnarray*}
where
\[
\mathrm{C} = \frac{1}{\rho^{1 + \alpha}} \left( 1 + \frac{2 \mathrm{C}_p}{1 - \rho^{1 + \alpha}} \right).
\]
This completes the proof.
\end{proof}

Finally, we are in a position to address the proof of Theorem \ref{Thm1.1}.

\begin{proof}[{\bf Proof of Theorem \ref{Thm1.1}}]
Fix \( x_0 \in \mathrm{B}_{1/2} \) and define the function \( w: \mathrm{B}_1 \to \mathbb{R} \) by
\[
w(x) = \frac{u(x_{0}+rx)}{\tau},
\]
for parameters \(0 < r \leq \frac{1}{2}\) and \(\tau > 0\) to be determined \textit{a posteriori}. It is straightforward to verify that \(w\) satisfies
\[
|\nabla w|^{\theta} \Delta_{p}^{\mathrm{N}} w + \mathscr{H}_{\tau,r}(\nabla w, x) = f_{\tau,r}(x) \quad \text{in } \mathrm{B}_1,
\]
where
\begin{itemize}
    \item \(\mathscr{H}_{\tau,r}(\xi,x) = \langle \mathfrak{B}_{\tau,r}(x), \xi \rangle |\xi|^{\theta} + \varrho_{\tau,r}(x) |\xi|^{\sigma},\)
    \item \(\mathfrak{B}_{\tau,r}(x) = r \mathfrak{B}(x_0 + rx)\) and \(\varrho_{\tau,r}(x) = \dfrac{r^{2+\theta-\sigma}}{\tau^{1+\theta-\sigma}} \varrho(x_0 + rx),\)
    \item \(f_{\tau,r}(x) = \dfrac{r^{2+\theta}}{\tau^{1+\theta}} f(x_0 + rx).\)
\end{itemize}

Observe that
\begin{equation}\label{estdeftaur}
\|f_{\tau,r}\|_{L^{\infty}(\mathrm{B}_{1})} = \frac{r^{2+\theta}}{\tau^{1+\theta}} \|f\|_{L^{\infty}(\mathrm{B}_{r}(x_0))} \leq \frac{r^{2+\theta}}{\tau^{1+\theta}} \|f\|_{L^{\infty}(\mathrm{B}_{1})},
\end{equation}
since \(\mathrm{B}_r(x_0) \subset \mathrm{B}_1\). Regarding the vector field \(\mathfrak{B}_{\tau,r}\), note that
\[
\|\mathfrak{B}_{\tau,r}\|_{L^{\infty}(\mathrm{B}_1)} \leq \|\mathfrak{B}\|_{L^{\infty}(\mathrm{B}_1)},
\]
because \(0 < r \leq \frac{1}{2}\). Furthermore,
\begin{equation}\label{estdevarrhotaur}
\|\varrho_{\tau,r}\|_{L^{\infty}(\mathrm{B}_1)} \leq \frac{r^{2+\theta-\sigma}}{\tau^{1+\theta-\sigma}} \|\varrho\|_{L^{\infty}(\mathrm{B}_1)}.
\end{equation}

Now, let \(\delta_0 > 0\) be the constant from Proposition \ref{Prop4.1}, and define the constants \(\tau\) and \(r\) by
\[
\tau \coloneqq \|u\|_{L^{\infty}(\mathrm{B}_1)} + \|\varrho\|^{\frac{1}{1+\theta-\sigma}}_{L^{\infty}(\mathrm{B}_1)} + \|f\|^{\frac{1}{1+\theta}}_{L^{\infty}(\mathrm{B}_1)} \quad \text{and} \quad 
r \coloneqq \min \left\{ \frac{\delta_0}{\|\mathfrak{B}\|_{L^{\infty}(\mathrm{B}_1; \mathbb{R}^n)} + 1}, \, \delta_0^{\frac{1}{2+\theta}}, \, \delta_0^{\frac{1}{2+\theta-\sigma}}, \, \frac{1}{2} \right\}.
\]

With this choice of parameters, \(w\) falls under the hypotheses of Proposition \ref{Prop4.1}. Consequently, both \(w\) and \(u\) belong to \(\mathrm{C}^{1,\alpha}\) at \(x_0\). Moreover, by a standard covering argument, we deduce that \(u \in \mathrm{C}^{1,\alpha}(\mathrm{B}_{1/2})\), and it satisfies the following regularity estimate:
\[
[u]_{\mathrm{C}^{1,\alpha}(\mathrm{B}_{1/2})} \leq \mathrm{C} \left( \|u\|_{L^{\infty}(\mathrm{B}_1)} + \|\varrho\|^{\frac{1}{1+\theta-\sigma}}_{L^{\infty}(\mathrm{B}_1)} + \|f\|^{\frac{1}{1+\theta}}_{L^{\infty}(\mathrm{B}_1)} \right),
\]
where \(\mathrm{C} > 0\) is a universal constant.
\end{proof}

\section{Sharp regularity estimates: Proof of Theorem \ref{Opt_reg-Extremum-point}}

This section is devoted to addressing sharp regularity estimates along extremum points, namely. Theorem \ref{Opt_reg-Extremum-point}.

\begin{proof}[{\bf Proof of Theorem \ref{Opt_reg-Extremum-point}}] 
For simplicity, and without loss of generality, we assume \( x_0 = \vec{0} \). After performing the translation \( v(x) \coloneqq u(x) - u(x_0) \), where \( x_0 \) is a minimum point, it is well known that proving estimate \ref{Higher Reg'} is equivalent to verifying the existence of a constant \( \mathrm{C} > 0 \) such that, for all \( j \in \mathbb{N} \), the following geometric iteration holds (see, for instance, \cite{CKS00}):
\begin{equation}\label{higher1}
\mathfrak{s}_{j+1} \leq \max\left\{\mathrm{C} \cdot 2^{-\hat{\beta}(j+1)}, 2^{-\hat{\beta}} \mathfrak{s}_j\right\},
\end{equation}
where
\begin{equation*}\label{beta_hat}
\mathfrak{s}_j := \sup_{\mathrm{B}_{2^{-j}}} v(x), \quad \text{and} \quad \hat{\beta} := 1 + \frac{1}{1+\theta} = \frac{2+\theta}{1+\theta}.
\end{equation*}

Suppose, for the sake of contradiction, that \eqref{higher1} does not hold. That is, for each \( k \in \mathbb{N} \), there exists \( j_k \in \mathbb{N} \) such that
\begin{equation}\label{higher2}
\mathfrak{s}_{j_k + 1} > \max\left\{k \cdot 2^{-\hat{\beta}(j_k + 1)}, 2^{-\hat{\beta}} \mathfrak{s}_{j_k}\right\}.
\end{equation}

Now, for each \( k \in \mathbb{N} \), define the rescaled function \( v_k: \mathrm{B}_1 \rightarrow \mathbb{R} \) as
\[
v_k(x) \coloneqq \frac{v(2^{-j_k} x)}{\mathfrak{s}_{j_k + 1}}.
\]
Observe that
\begin{eqnarray}
0 \leq v_k(x) \leq 2^{\hat{\beta}}, \nonumber\\
v_k(0) = 0, \label{vk2}\\ 
\sup_{\overline{\mathrm{B}_{\frac{1}{2}}}} v_k(x) = 1. \label{vk3}
\end{eqnarray}

Moreover, in the viscosity sense, we have
\begin{equation}\label{vksolution}
|\nabla v_k|^\theta \Delta_p^\mathrm{N} v_k + \mathscr{H}_{k}(\nabla v_k,x) =  f_k(x),
\end{equation}
where 
\begin{itemize}
    \item \(\mathscr{H}_{k}(\xi,x) = \langle\mathfrak{B}_{k}(x), \xi \rangle |\xi|^{\theta} + \varrho_{k}(x) |\xi|^{\sigma}\),
    \item \(\mathfrak{B}_{k}(x) = 2^{-j_k} \mathfrak{B}(2^{-j_k} x)\), \quad \(\varrho_{k}(x) = \frac{2^{-j_k(2+\theta-\sigma)}}{\mathfrak{s}_{j_k+1}^{1+\theta-\sigma}} \varrho(2^{-j_k} x)\),
    \item \(f_k(x) = \frac{2^{-j_k(2+\theta)}}{\mathfrak{s}_{j_k+1}^{1+\theta}} f(2^{-j_k} x)\).
\end{itemize}

Now, by Lemma \ref{Holderest}, there exists a function \( v_\infty \in \mathrm{C}^{0, \mu} \) such that \( v_k \to v_\infty \) locally uniformly. Moreover, \( v_\infty \) satisfies
\begin{equation}\label{Vasco}
v_\infty(0) = 0 \quad \text{and} \quad \sup_{\overline{\mathrm{B}_{\frac{1}{2}}}} v_\infty(x) = 1,
\end{equation}
as a consequence of \eqref{higher2} and \eqref{vk2}.

Using again \eqref{higher2}, we obtain
\[
\|f_k\|_{L^{\infty}(\mathrm{B}_1)} \leq 2^{2+\theta} \left(\frac{1}{k}\right)^{1+\theta} \|f\|_{L^{\infty}(\mathrm{B}_1)} \to 0 \quad \text{as} \quad k \to \infty.
\]
Additionally,
\[
\|\mathfrak{B}_k\|_{L^{\infty}(\mathrm{B}_1;\mathbb{R}^{n})} \leq 2^{-j_k} \|\mathfrak{B}\|_{L^{\infty}(\mathrm{B}_1;\mathbb{R}^{n})} \to 0 \quad \text{as} \quad k \to \infty.
\]
Moreover, applying \eqref{higher2} once again, we find
\[
\|\varrho_k\|_{L^{\infty}(\mathrm{B}_1)} \leq \frac{1}{k^{\theta+1-\sigma} \cdot 2^{\frac{(\theta+2)(\theta+1-\sigma) - 3\sigma j_k}{\theta+1}}} \|\varrho\|_{L^{\infty}(\mathrm{B}_1)} \to 0 \quad \text{as} \quad k \to \infty.
\]

Therefore, by the stability result (see \cite[Appendix]{Attou18}) combined with \eqref{vksolution}, we conclude that
\[
|\nabla v_\infty|^\theta \Delta_p^\mathrm{N} v_\infty = 0 \quad \text{in} \quad \mathrm{B}_{8/9}.
\]

Finally, by applying Lemma \ref{CT-Lemma}, Theorem \ref{Thm-Equiv-Sol}, and Vázquez's Strong Maximum Principle from \cite{Vazq84}, we deduce that \( v_\infty \equiv 0 \) in \( \mathrm{B}_{8/9} \), which contradicts \eqref{Vasco}. Hence, \eqref{higher1} must hold.

To conclude the proof, let \( r \in (0, 1) \), and choose \( j \in \mathbb{N} \) such that
\[
2^{-(j + 1)} \leq r \leq 2^{-j}.
\]
Then, using \eqref{higher1}, we establish the existence of a universal constant \( \mathrm{C} > 0 \) such that
\[
\sup_{\mathrm{B}_r} \left(u(x) - u(x_0)\right) = \sup_{\mathrm{B}_r} v \leq \sup_{\mathrm{B}_{2^{-j}}} v \leq \frac{\mathrm{C}}{2^{\hat{\beta}}} r^{\hat{\beta}}.
\]
This concludes the proof.
\end{proof}

\section{Non-degeneracy estimate: Proof of Theorem \ref{Thm-Non-Deg}}

In what follows, we establish the non-degeneracy property of viscosity solutions to \eqref{Problem}, namely Theorem \ref{Thm-Non-Deg}. As a relevant observation, this property holds even in the superlinear regime, i.e., $\sigma > 1+\theta$.  

\begin{proof}[{\bf Proof of Theorem \ref{Thm-Non-Deg}}]
Let \( x_0 \in B_1 \) be a local extremum point. Without loss of generality, we may assume that \( x_0 = 0 \) and that it is a local minimum. 

Now, define the following barrier function:
\[
\psi(x) = \kappa |x|^{\beta}, \quad \text{for}\quad x \in B_1,
\]
where \(\beta = \frac{2+\theta}{1+\theta}\), and \(\kappa > 0\) is a constant to be chosen later. It is straightforward to verify that:
\begin{itemize}
\item[\checkmark] \(\nabla \psi(x) = \kappa \beta |x|^{\beta-2} x\)
\item[\checkmark] \(D^2 \psi(x) = \kappa \beta |x|^{\beta-2} \left[ \mathrm{Id}_n - (2 - \beta) \frac{x}{|x|} \otimes \frac{x}{|x|} \right]\)
\end{itemize}

Hence, \(\psi\) satisfies (pointwise):
\begin{itemize}
\item In the sublinear case \(\theta < \sigma < 1+\theta\),
\begin{eqnarray}
|\nabla \psi|^{\theta} \Delta_{p}^{\mathrm{N}} \psi + \mathscr{H}(\nabla \psi, x) &\leq& \kappa^{\theta} \beta^{1+\theta} \Big[n - 1 + (\beta - 1)(p - 1) \nonumber \\
&+& \|\mathfrak{B}\|_{L^{\infty}(\mathrm{B}_1; \mathbb{R}^n)} + \beta^{\sigma - (1+\theta)} \|\varrho\|_{L^{\infty}(\mathrm{B}_1)} \Big] \nonumber \\
&<& \mathfrak{c}_0 \label{supersolucaopsi1}
\end{eqnarray}
by choosing \(\kappa\) such that
\[
\kappa < \min\left\{1, \frac{\mathfrak{c}_0^{1/\theta}}{\beta^{(1+\theta)/\theta} \left[n - 1 + (\beta - 1)(p - 1) + \|\mathfrak{B}\|_{L^{\infty}(\mathrm{B}_1; \mathbb{R}^n)} + \beta^{\sigma - (1+\theta)} \|\varrho\|_{L^{\infty}(\mathrm{B}_1)} \right]^{1/\theta}} \right\}.
\]

\item In the superlinear case \((1+\theta < \sigma )\),
\begin{eqnarray}
|\nabla \psi|^{\theta} \Delta_{p}^{\mathrm{N}} \psi + \mathscr{H}(\nabla \psi, x) &\leq& \kappa^{1+\theta} \beta^{1+\theta} |x|^{(\beta-1)(1+\theta)-1} \Big[n - 1 + (\beta - 1)(p - 1) \nonumber \\
&+& \|\mathfrak{B}\|_{L^{\infty}(\mathrm{B}_1; \mathbb{R}^n)} + (\kappa \beta)^{\sigma - (1+\theta)} \|\varrho\|_{L^{\infty}(\mathrm{B}_1)} \Big] \nonumber \\
&<& \mathfrak{c}_0 \label{supersolucaopsi2}
\end{eqnarray}
whenever we choose
\[
\kappa < \min\left\{1, \frac{\mathfrak{c}_0^{1/(1+\theta)}}{\beta \left[n - 1 + (\beta - 1)(p - 1) + \|\mathfrak{B}\|_{L^{\infty}(\mathrm{B}_1; \mathbb{R}^n)} + \beta^{\sigma - (1+\theta)} \|\varrho\|_{L^{\infty}(\mathrm{B}_1)} \right]^{1/(1+\theta)}} \right\}.
\]
\end{itemize}

Since the right-hand side of \eqref{Problem} does not depend on \( u \), we may, by a translation argument, assume without loss of generality that \( u(0) > 0 \). Indeed, for any \(\varepsilon > 0\), define \(v_{\varepsilon}(x) = u(x) - u(0) + \varepsilon\), so that \(v_{\varepsilon}(0) > 0\). Moreover, in the viscosity sense, we have
\[
|\nabla \psi|^{\theta} \Delta_{p}^{\mathrm{N}} \psi + \mathscr{H}(\nabla \psi, x) < \mathfrak{c} \leq f(x) = |\nabla v_{\varepsilon}|^{\theta} \Delta_{p}^{\mathrm{N}} v_{\varepsilon} + \mathscr{H}(\nabla v_{\varepsilon}, x).
\]

We claim that for any open ball \(\mathrm{B}_{r} \subset \mathrm{B}_{1}\), there exists a point \(y_{r} \in \partial \mathrm{B}_{r}\) such that \(v_{\epsilon}(y_{r}) \geq \psi(y_{r})\). Suppose, for contradiction, that this is not the case. Then we would have \(v_{\epsilon} < \psi\) on \(\partial \mathrm{B}_{r}\). 

Since \(\psi\) satisfies either \eqref{supersolucaopsi1} (sublinear case) or \eqref{supersolucaopsi2} (superlinear case), it follows that \(\psi\) is a strict supersolution of \eqref{Problem}. Moreover, note that \(v_{\epsilon}\) is also a (viscosity) subsolution of \eqref{Problem}. Thus, by the Comparison Principle (Theorem \ref{Comp-Princ}), we conclude that \(v_{\epsilon} \leq \psi\) in \(\mathrm{B}_{r}\). In particular,
\[
0 = \psi(0) \geq v_{\epsilon}(0) = \epsilon > 0,
\]
which is a contradiction. Therefore, there must exist \(y_{r} \in \partial \mathrm{B}_{r}\) such that \(v_{\epsilon}(y_{r}) \geq \psi(y_{r})\). 

This yields
\begin{eqnarray*}
\sup_{\partial \mathrm{B}_{r}} (u - u(0)) + \epsilon = \sup_{\partial \mathrm{B}_{r}} v_{\epsilon} \geq v_{\epsilon}(y_{r}) \geq \psi(y_{r}) = \kappa |y_{r}|^{\beta} = \kappa r^{\beta}.
\end{eqnarray*}
Finally, by letting \(\epsilon \to 0^{+}\), we obtain the desired result. The case where \(x_0\) is a local maximum holds by an analogous argument. This completes the proof.
\end{proof}

\section{Strong Maximum Principle: Proof of Theorem \ref{thm:SMP}}

This section is dedicated to establishing the Strong Maximum Principle/Hopf-type result for viscosity solutions of \eqref{Problem}, namely, Theorem \ref{thm:SMP}. Notably, this result holds even in the superlinear regime; that is, when $\sigma > 1+\theta$.  

\begin{proof}[{\bf Proof of Theorem \ref{thm:SMP}}]
Assume that \( v \) is not identically zero in \( \Omega \). We aim to show that \( v > 0 \) in \( \Omega \). Suppose, for the sake of contradiction, that there exist \(x_0 \in \Omega\) and \(r > 0\) such that \(\mathrm{B}_{2r}(x_0) \subset\subset \Omega \), \( v > 0 \) in \(\mathrm{B}_r(x_0) \), and \( v(z) = 0 \) for some \( z \in \partial \mathrm{B}_r(x_0) \). Without loss of generality, we assume \( x_0 = 0 \).

Define the function \( u(x) = e^{-\alpha |x|^2} - e^{-\alpha r^2} \), with \(\alpha > 0\) to be chosen \textit{a posteriori}. Clearly, \( u > 0 \) in \( \mathrm{B}_r \) and \( u = 0 \) on \(\partial \mathrm{B}_{r}\). Consider the differential operator
\[
\mathcal{L}[u](x)=|\nabla u(x)|^\theta\,\Delta_{p}^{\mathrm{N}}u(x) + \mathscr{H}(\nabla u(x),x)  + \mathfrak{c}(x)\,u^{1+\theta}(x).
\]
A straightforward computation shows that, for \( \frac{r}{2} \leq |x| \leq r \), the following inequalities hold:
\begin{itemize}
    \item If \(1+\theta<\sigma\), then
    \begin{eqnarray}
    \mathcal{L}[u](x) &\geq& (2\alpha e^{-\alpha|x|^2}|x|)^{1+\theta} \Bigg[ \alpha(p-1)r - \frac{2(n+p-2)}{r} \nonumber\\
    && - \|\mathfrak{B} \|_{L^{\infty}(\Omega;\mathbb{R}^{n})} - \|\varrho\|_{L^{\infty}(\Omega)}(2\alpha e^{-\alpha|x|^{2}}r)^{\sigma-1-\theta} \nonumber\\
    && - \| \mathfrak{c} \|_{L^{\infty}(\Omega)} \frac{1}{r}\left(1 - e^{-\alpha (r^2-|x|^2)} \right)^{1+\theta} \Bigg] > 0,\label{coddebarreirae1}
    \end{eqnarray}
    for suitably large \( \alpha \gg 1 \).
    
    \item If \(\theta<\sigma<1+\theta\), and assuming \(\varrho\geq 0\), then
    \begin{eqnarray}
    \mathcal{L}[u](x) &\geq& (2\alpha e^{-\alpha|x|^2}|x|)^{1+\theta} \Bigg[ \alpha(p-1)r - \frac{2(n+p-2)}{r} \nonumber\\
    && - \|\mathfrak{B} \|_{L^{\infty}(\Omega;\mathbb{R}^{n})} - \| \mathfrak{c} \|_{L^{\infty}(\Omega)} \frac{1}{r}\left(1 - e^{-\alpha (r^2-|x|^2)} \right)^{1+\theta} \Bigg] > 0. \label{coddebarreirae2}
    \end{eqnarray}
\end{itemize}

Observe that one can choose \( \mathrm{K}_0 \ll 1 \) such that \( \mathrm{K}_0 u \leq v \) on \( \partial \mathrm{B}_{r/2} \), since \( u > 0 \) on this boundary. Hence, by the Comparison Principle (Lemma \ref{Comp-Princ}), it follows that \(v \geq \mathrm{K}_0 u\) in the annular region \(\overline{\mathrm{B}}_{r} \setminus \mathrm{B}_{r/2}\).

Moreover, \( v \leq 0 \leq u \) in \( \mathrm{B}_{2r} \setminus \mathrm{B}_r \), so \( \mathrm{K}_0 u \) touches \( v \) from below at the point \( z \). Thus, invoking the definition of viscosity solutions (see Definition \ref{DefViscSol}) with the test function \(\mathrm{K}_0 u\), we obtain
\[
|\nabla(\mathrm{K}_0u(z))|^{\theta}\Delta_{p}^{\mathrm{N}} (\mathrm{K}_0 u(z)) + \mathscr{H}(\nabla (\mathrm{K}_0u(z)),z) + \mathfrak{c}(z)\, (\mathrm{K}_0 u(z))^{1+\theta} \leq 0,
\]
which contradicts the inequalities \eqref{coddebarreirae1} and \eqref{coddebarreirae2}. Hence, \( v > 0 \) in \( \Omega \).

To conclude the proof, we reiterate the above argument and use the fact that for any \( s > 0 \), there exists a constant \( \mathrm{C}_s > 0 \) such that \( 1 - e^{-t} \geq \mathrm{C}_s t \) for all \( t \in [0,s] \), thereby completing the proof.
\end{proof}

\section{H\'{e}non-type problems with strong absorption}

In this final section, we present the proof of Theorems \ref{Higher_continuity} and \ref{NDHTE} concerning qualitative properties of solutions to \eqref{Eq-Henon-type}. The first one is an improved regularity estimate along free boundary points.

\begin{proof}[{\bf Proof of Theorem \ref{Higher_continuity}}]
The proof of the result is similar to that of Theorem \ref{Opt_reg-Extremum-point}, so we will only comment on the significant changes in the proof. We may suppose without loss of generality that \(x_{0}=0\). To prove the estimate, it is sufficient to show that there exists a constant \(\mathrm{C}>0\) such that, for all \(j\in\mathbb{N}\)
\begin{equation}\label{higher1'}
\mathrm{s}_{j+1} \leq \max\left\{\mathrm{C} \cdot 2^{-\bar{\beta}(j+1)}, 2^{-\bar{\beta}} \mathrm{s}_j\right\},
\end{equation}
where
\begin{eqnarray*}
\mathrm{s}_{j}=\sup_{\mathrm{B}_{2^{-j}}}u\,\,\,\mbox{and}\,\,\, \bar{\beta}=\frac{2+\theta+\alpha}{1+\theta-m}.
\end{eqnarray*}
Suppose, for the sake of contradiction, that \eqref{higher1'} fails to hold, \textit{i.e.,} for each \( k \in \mathbb{N} \), there exists \( j_k \in \mathbb{N} \) such that
\begin{equation}\label{higher2'}
\mathrm{s}_{j_k + 1} > \max\left\{k 2^{-\bar{\beta}(j_k + 1)}, 2^{-\bar{\beta}} \mathrm{s}_{j_k}\right\}.
\end{equation}
Now, define the rescaled function \( u_k: \mathrm{B}_1 \rightarrow \mathbb{R} \) as
\[
u_k(x) \coloneqq \frac{u(2^{-j_k} x)}{\mathrm{s}_{j_k + 1}}. 
\]
As in the proof of Theorem \ref{Opt_reg-Extremum-point} we observe that for all \(k\in\mathbb{N}\), 
\begin{equation}\label{Bound-u_k}
0 \leq u_k(x) \leq 2^{\bar{\beta}} \quad \text{in} \quad B_{1} \quad \text{and} \quad  u_{k}(0)=0
\end{equation}
and \(\displaystyle\sup_{\overline{\mathrm{B}_{\frac{1}{2}}}} u_k(x) = 1\). Hereafter, \(u_{k}\) solves in the viscosity sense
\begin{align}
|\nabla u_k|^\theta \Delta_p^\mathrm{N} u_k +\mathscr{H}_{k}(\nabla u_{k},x)&= \left(\frac{2^{-j_k(2 + \theta)}}{\mathrm{s}_{j_k + 1}^{1 + \theta - m}}\right) \mathfrak{f}\left(2^{-j_k} x\right) (u_k)_+^m(x) \nonumber \\[0.2cm]
&\lesssim \left(\frac{2^{-j_k(2 + \theta + \alpha)}}{\mathrm{s}_{j_k + 1}^{1 + \theta - m}}\right) \mathrm{dist}(x, \mathrm{F})^\alpha (u_k)_+^m(x) \nonumber \\
&\coloneqq \Psi_k(x) \label{vksolution'},
\end{align}
where \(\mathscr{H}(\xi,x)=\langle \mathfrak{B}_{k}(x),\xi\rangle|\xi|^{\theta}+\varrho_{k}|\xi|^{\sigma}\) for
\[
\mathfrak{B}_{k}(x)=2^{-j_{k}}\mathfrak{B}(2^{-j_{k}}x)\,\,\,\mbox{and}\,\,\, \varrho_{k}(x)=\frac{2^{-j_{k}(2+\theta-\sigma)}}{\mathrm{s}_{j_{k}+1}^{1+\theta-\sigma}}\varrho(2^{-j_k}x).
\]
Note that the condition \eqref{higher2'} implies that
\[
\|\varrho_k\|_{L^{\infty}(\mathrm{B}_{1})}\leq \frac{2^{\bar{\beta}(1+\theta-\sigma)}}{k^{1+\theta-\sigma}2^{j_{k}(2+\theta-\sigma -(1+\theta-\sigma)\bar{\beta})}}\|\varrho\|_{L^{\infty}(\mathrm{B}_{1})}.
\]

In the conditions on \(\alpha\) and \(m\) implies that \(2+\theta-\sigma -(1+\theta-\sigma)\bar{\beta}\geq0\), it holds
\(\|\varrho_{k}\|_{L^{\infty}(\mathrm{B}_{1})}\to0\). 
Furthermore, it holds 
$$
\|\mathfrak{B}_k\|_{L^{\infty}(\mathrm{B}_1;\mathbb{R}^{n})} \leq 2^{-j_k}\|\mathfrak{B}\|_{L^{\infty}(\mathrm{B}_1;\mathbb{R}^{n})} \to 0 \quad \text{as} \quad k \to \infty.
$$
 By the compactness (Lemma \ref{Holderest}), we can find \( u_\infty \in \mathrm{C}_{\text{loc}}^{0, \mu} \) such that \( u_k \to u_{\infty} \) locally uniformly. Moreover, \( u_\infty \) satisfies
\begin{equation}\label{Vasco'}
u_\infty(0) = 0 \quad \text{and} \quad \sup_{\overline{\mathrm{B}_{\frac{1}{2}}}} u_\infty(x) = 1.
\end{equation}
Now, using \eqref{higher2'} and \eqref{Bound-u_k}, we obtain 
\[
\Psi_k \leq 2^{\bar{\beta}} \mathrm{diam}(\Omega) \frac{2^{2 + \theta + \alpha + \bar{\beta} m}}{k} \to 0 \quad \text{as} \quad k \to \infty.
\]
By stability (see \cite[Appendix]{Attou18}), together with \eqref{vksolution'}, we conclude that 
\[
|\nabla u_\infty|^\theta \Delta_p^\mathrm{N} u_\infty = 0 \quad \text{in} \quad \mathrm{B}_{8/9}.
\]
Finally, as in the proof of Theorem \ref{Opt_reg-Extremum-point}, it follows that \( u_{\infty} \equiv 0 \) in \( \mathrm{B}_{8/9} \), which contradicts \eqref{Vasco'}. Therefore, \eqref{higher1'} holds, which finishes the proof of the theorem.
\end{proof}

\begin{remark}\label{Example6.7}
Another relevant model to consider in \eqref{Eq-Henon-type} involves general distance-type weights and “noise terms,” and is given by the following problem:
\[
|\nabla u(x)|^{\theta} \Delta_p^\mathrm{N} u(x) + \mathscr{H}(\nabla u(x), x) = \sum_{i=1}^{k_0} \mathrm{dist}^{\alpha_i}(x, \mathrm{F}_i)\, u_{+}^{m_i}(x) + g_i(|x|) \quad \text{in} \quad \mathrm{B}_1,
\]
where \( \mathrm{F}_i \Subset \mathrm{B}_1 \) are closed sets, \( m_i \in [0, \theta + 1) \), \( \alpha_i > 0 \), and \( g_i: \mathrm{B}_1 \to \mathbb{R} \) are continuous functions such that
\[
\lim_{\substack{x \to \mathrm{F}_i \\ x \notin \mathrm{F}_i}} \frac{g_i(|x|)}{\mathrm{dist}(x, \mathrm{F}_i)^{\gamma_i}} = \mathfrak{L}_i \in [0, \infty) \quad \text{for some} \quad \gamma_i \geq 0.
\]

In this context, non-negative viscosity solutions belong to \( \mathrm{C}_{\mathrm{loc}}^{\kappa_0} \) along the set \( \mathfrak{A}_0 \), where
\[
\kappa_0 \coloneqq \min_{1 \leq i \leq k_0} \left\{ \frac{2 + \theta + \alpha_i}{1 + \theta - m_i}, \frac{2 + \theta + \gamma_i}{1 + \theta} \right\}, \quad \text{and} \quad \mathfrak{A}_0 \coloneqq \left( \bigcap_{i=1}^{k_0} \mathrm{F}_i \right) \cap \partial \{ u > 0 \} \cap \mathrm{B}_{1/2}.
\]
\end{remark}

\medskip

Finally, we address the non-degeneracy estimate for solutions of H\'{e}non-type models with strong absorption in the superlinear regime, i.e., $\sigma > 1+\theta$.  

\begin{proof}[{\bf Proof of Theorem \ref{NDHTE}}]
Let \( x_0 \in \overline{\{u > 0\}} \cap \mathrm{B}_{1/2} \). Without loss of generality, we may assume that \( x_0 = 0 \). Since \( u \) is continuous, we can suppose that \( x_0 \in \{u > 0\} \cap \mathrm{B}_{1/2} \). Define the barrier function
\[
\psi(x) = \kappa |x|^{\hat{\beta}},
\]
where \(\hat{\beta} = \frac{2 + \theta + \alpha}{1 + \theta - m}\), and the constant \(\kappa > 0\) is chosen such that:
\[
\kappa < \min\left\{1, \frac{\mathfrak{c}_{0}^{\frac{1}{1 + \theta - m}}}{\hat{\beta}^{\frac{1 + \theta}{1 + \theta - m}} \left[ n - 1 + (\hat{\beta} - 1)(p - 1) + \|\mathfrak{B}\|_{L^{\infty}(\mathrm{B}_{1}; \mathbb{R}^{n})} + \hat{\beta}^{\sigma - (1 + \theta)} \|\varrho\|_{L^{\infty}(\mathrm{B}_{1})} \right]^{\frac{1}{1 + \theta - m}}} \right\}.
\]

Hence, in the viscosity sense, we have
\[
|\nabla \psi|^{\theta} \Delta_{p}^{\mathrm{N}} \psi + \mathscr{H}(\nabla \psi, x) - \mathfrak{c}_{0} |x|^{\alpha} \psi_{+}^{m}(x) < 0 \leq |\nabla u|^{\theta} \Delta_{p}^{\mathrm{N}} u + \mathscr{H}(\nabla u, x) - \mathfrak{c}_{0} |x|^{\alpha} u_{+}^{m}(x).
\]

As in the proof of Theorem \ref{Thm-Non-Deg}, it is possible to verify that for any open ball \( \mathrm{B}_{r} \subset \mathrm{B}_{1/2} \), there exists \( y_r \in \partial \mathrm{B}_{r} \) such that \( u(y_r) \geq \psi(y_r) \). Finally, this implies that
\[
\sup_{\partial \mathrm{B}_{r}} u \geq u(y_r) \geq \psi(y_r) = \kappa |y_r|^{\hat{\beta}} = \kappa r^{\hat{\beta}},
\]
thereby completing the proof.
\end{proof}

\subsection*{Acknowledgments}

J. da Silva Bessa has been supported by FAPESP-Brazil under Grant No. 2023/18447-3. J.V. da Silva has received partial support from CNPq-Brazil under Grant No. 307131/2022-0, FAEPEX-UNICAMP (Project No. 2441/23, Special Calls - PIND - Individual Projects, 03/2023), and Chamada CNPq/MCTI No. 10/2023 - Faixa B - Consolidated Research Groups under Grant No. 420014/2023-3.






\end{document}